\newcommand{\R}{{\mathbb{R}}}
\newcommand{\E}{{\mathbb{E}}}
\newcommand{\N}{{\mathbb{N}}}
\newcommand{\F}{{\mathcal{F}}} 
\renewcommand{\P}{{\mathbb{P}}} 
\newcommand{\dom}{{\mathrm{dom}}}
\newcommand{\diff}[1]{\,\mathrm{d}#1}
\newcommand{\triple}{{\vert\kern-0.25ex\vert\kern-0.25ex\vert}}
\theoremstyle{plain}
\newtheorem{definition}{Definition}[section]
\newtheorem{theorem}[definition]{Theorem}
\newtheorem{lemma}[definition]{Lemma}
\newtheorem{prop}[definition]{Proposition}
\newtheorem{assumption}[definition]{Assumption}
\theoremstyle{definition}
\newtheorem{remark}[definition]{Remark}
\begin{document}

\title[A randomized Galerkin finite element method for SEE]
{
A randomized and fully discrete\\
Galerkin finite element method for\\
semilinear stochastic evolution equations
}

\author[R.~Kruse]{Raphael Kruse}
\address{Raphael Kruse\\
Technische Universit\"at Berlin\\
Institut f\"ur Mathematik, Secr. MA 5-3\\
Stra\ss e des 17.~Juni 136\\
DE-10623 Berlin\\
Germany}
\email{kruse@math.tu-berlin.de}

\author[Y.~Wu]{Yue Wu}
\address{Yue Wu\\
Technische Universit\"at Berlin\\
Institut f\"ur Mathematik, Secr. MA 5-3\\
Stra\ss e des 17.~Juni 136\\
DE-10623 Berlin\\
Germany}
\email{wu@math.tu-berlin.de}

\keywords{Galerkin finite element method,
stochastic evolution equations,
randomized Runge--Kutta method, strong convergence, noise approximation} 
\subjclass[2010]{60H15, 65C30, 65M12, 65M60}  

\begin{abstract}
  In this paper the numerical solution of non-autonomous
  semilinear stochastic evolution equations driven by an additive
  Wiener noise is investigated. We introduce a novel fully discrete
  numerical approximation that combines a standard Galerkin finite element
  method with a randomized Runge--Kutta scheme. 
  Convergence of the method to the mild solution is proven with
  respect to the $L^p$-norm, $p \in [2,\infty)$.
  We obtain the same temporal order of convergence as for
  Milstein--Galerkin finite element methods but without imposing any
  differentiability condition on the nonlinearity. The results are 
  extended to also incorporate a spectral
  approximation of the driving Wiener process. An application to a stochastic
  partial differential equation is discussed and illustrated through a
  numerical experiment. 
\end{abstract}

\maketitle

\section{Introduction}
\label{sec:intro}

In this paper we investigate the numerical solution of non-autonomous
semilinear stochastic evolution equations (SEE) driven by an additive Wiener 
noise. More precisely, let $(H, (\cdot, \cdot), \| \cdot \|)$ and $(U,
(\cdot, \cdot)_U, \| \cdot \|_U)$ be two separable $\R$-Hilbert spaces. For a given
$T \in (0,\infty)$ we denote by $(\Omega_W, \F^W, (\F^W_t)_{t \in [0,T]},
\P_W)$ a filtered probability space satisfying the usual conditions. By
$(W(t))_{t \in [0,T]}$ we denote a cylindrical $(\F^W_t)_{t \in [0,T]}$-Wiener
process on $U$ with associated covariance operator $Q \in \mathcal{L}(U)$.

Our goal is the approximation of the mild solution to SEEs of the form
\begin{align}
  \label{eq:SPDE}
  \begin{cases}
    \diff{X(t)} + \big[ A X(t) + f(t,X(t)) \big] \diff{t}
    = g(t) \diff{W(t)},& \quad \text{for } t \in (0,T],\\
    X(0) = X_0.& 
  \end{cases}
\end{align}
Hereby, we assume that $-A \colon \dom(A) \subset H \to H$ is the
infinitesimal generator of an analytic semigroup $(S(t))_{t \in [0,\infty)}
\subset \mathcal{L}(H)$ on $H$. The initial value $X_0 \colon \Omega_W \to H$
is assumed to be a $p$-fold integrable random variable for some $p \in
[2,\infty)$, while the mapping $g \colon
[0,T] \to \mathcal{L}_2^0$ is H\"older continuous with exponent
$\frac{1}{2}$. 
Here $\mathcal{L}_2^0 =\mathcal{L}_2(Q^{\frac{1}{2}}(U),H)$ denotes the set of
all Hilbert--Schmidt operators from $Q^{\frac{1}{2}}(U)$ to $H$. 
In addition, the mapping $f$ is
assumed to be Lipschitz continuous. 
A complete and more precise statement of all conditions on
$A$, $f$, $g$, and $X_0$ is given in Section~\ref{sec:str_error}.

Under these assumptions the \emph{mild solution} $X \colon [0,T] \times
\Omega_W \to H$ to \eqref{eq:SPDE} is uniquely determined by the
variation-of-constants formula 
\begin{align}
  \label{eq:mild}
  X(t) = S(t) X_0 - \int_0^t S(t - s) f(s,X(s)) \diff{s} + \int_0^t S(t-s) g(s)
  \diff{W(s)}
\end{align}
which holds $\P_W$-almost surely for all $t \in [0,T]$. Our assumptions in
Section~\ref{sec:str_error} ensure the existence of a unique mild
solution to \eqref{eq:SPDE}. We refer the reader to \cite{daprato1992} for a
general introduction of the semigroup approach to stochastic evolution
equations. We also refer to \cite[Chapter~2]{rockner2007} for further details on
cylindrical Wiener processes and Hilbert space valued stochastic integrals. 

Due to the presence of the noise the mild solution 
is, in general, only of very low spatial and temporal regularity. 
This in turn results in low convergence rates of numerical approximations.
Examples for standard numerical methods for SEEs are found, for instance, in
the monographs \cite{jentzen2011, kruse2014, lord2014} and the references
therein. Because of this, an accurate simulation of stochastic evolution
equations is often computationally expensive. 
This explains why the development of strategies to reduce 
the computational complexity has attracted a lot of attention over the last
decade. In particular, we mention the multilevel Monte-Carlo method that has
been applied to stochastic partial differential equations, for instance,
in \cite{barth2013b}. However, the success of this approach depends on the
availability of efficient numerical methods which converge with a high order
with respect to the mean-square norm.  

One way to construct such higher order numerical approximations 
is based on It\=o--Taylor expansions as discussed in \cite{jentzen2011}.
In fact, provided the coefficient functions are sufficiently smooth, numerical
methods of basically any temporal order can be constructed. However, these
methods sometimes behave unstable in numerical simulations and the necessity to
evaluate higher order derivatives or to generate multiple iterated stochastic
integrals limits their practical relevance. More severely, already 
the imposed smoothness requirements are too restrictive in most applications of
SEEs in infinite dimensions. For 
instance, the general assumption  
in \cite[Chapter~8]{jentzen2011} asks for the semilinearity $f \colon [0,T]
\times H \to H$ to be
infinitely often Fr\'echet differentiable with bounded derivatives. This
condition already excludes any truly nonlinear Nemytskii-type
operator. Compare further with Remark~\ref{rem:Nemytskii} below.
We also refer to \cite{barth2012, barth2013, becker2016, jentzen2015, kruse2014b,
leonhard2015, wang2017} for further numerical methods with a higher order
temporal convergence rate, such as Milstein-type schemes or Wagner--Platen-type
methods. Although the smoothness conditions on $f$
are substantially relaxed in some of these papers, all results at least 
require the Fr\'echet differentiability.

The purpose of this paper is the introduction of 
a novel numerical method for the approximation of the solution to
\eqref{eq:SPDE} that combines the drift-randomization 
technique from \cite{kruse2017b} for the numerical solution of
stochastic ordinary differential equations (SODEs) with a 
Galerkin finite element method from \cite{thomee2006}. 
As in \cite{kruse2017b}, it turns out that the resulting method converges
with a higher rate with respect to the temporal discretization parameter
without requiring any (Fr\'echet-) differentiability of the nonlinearity. 
Our approach also relaxes the smoothness requirements
of the coefficients $f$ and $g$ with respect to the time variable $t$
considerably.

To introduce the new method more precisely, let $k \in (0,T)$ denote
an equidistant temporal step size with associated grid points $t_n = n k$,
$n \in \{1,\ldots,N_k\}$. Hereby, $N_k \in \N$ is determined by $t_{N_k} = N_k k \le T
< (N_k + 1) k$. In addition, let $(V_h)_{h \in (0,1)} \subset H$ be a suitable
family of finite dimensional subspaces, where the parameter $h \in (0,1)$
controls the granularity of the space $V_h$ such that $\lim_{h \to 0}
\mathrm{dist}(u, V_h) = 0$ for all $u \in H$. The operators $P_h \colon H \to
V_h$ denote the orthogonal projectors onto $V_h$ while $A_h \colon V_h \to V_h$
is a suitable discrete version of the infinitesimal generator $A$.
For further details on the spatial discretization we refer to 
Section~\ref{sec:str_error}. 

For every $k \in (0,T)$ and $h \in (0,1)$ the
proposed \emph{randomized Galerkin finite element method} is then given by
the recursion
\begin{align}
  \label{eq:scheme}
  \begin{split}
    X_{k,h}^{n, \tau} + \tau_n k \big[ A_h X_{k,h}^{n,\tau} + P_h f(t_{n-1},
    X_{k,h}^{n-1}) \big] &= X_{k,h}^{n-1} + P_h g(t_{n-1})
    \Delta_{\tau_n k} W(t_{n-1}),\\
    X_{k,h}^{n} + k \big[ A_h X_{k,h}^{n} + P_h f(t_n^\tau
    , X_{k,h}^{n, \tau}) \big] &= X_{k,h}^{n-1} + P_h g(t_{n}^\tau)
    \Delta_{k} W(t_{n-1})
  \end{split}
\end{align}
for all $n \in \{1,\ldots,N_k\}$ with initial value $X_{k,h}^0 = P_h X_0$.
Hereby, $\tau = (\tau_n)_{n \in \N}$ denotes an independent family of
$\mathcal{U}(0,1)$-distributed random variables defined on a further
probability space $(\Omega_\tau, \F^\tau, \P_\tau)$. The intermediate 
value of $X_{k,h}^{n,\tau}$ represents an approximation of $X$ at the random
time point $t_n^\tau = t_{n-1} + \tau_n k$. Observe that $(t_n^\tau)_{n \in
\{1,\ldots,N_k\}}$ is an independent family of random variables with $t_n^\tau
\sim \mathcal{U}(t_{n-1},t_n)$. Further, for all $t \in [0,T)$ and $\kappa
\in (0,T - t)$ we denote the Wiener increments by
\begin{align}
  \label{eq:defDW}
  \Delta_\kappa W(t) := W(t + \kappa) - W(t).
\end{align}

The method \eqref{eq:scheme} constitutes a two-staged 
Runge--Kutta method whose second stage has a randomized node. Compare further
with \cite{kruse2017, kruse2017b}. Further randomized numerical methods for
partial differential equations are studied in \cite{eisenmann2017,
hofmanova2017}. Moreover, in case $\tau_n \equiv 0$ for all
$n \in \{1,\ldots,N_k\}$ we would recover the 
linearly-implicit Euler--Galerkin finite element method studied, for instance,
in \cite{kruse2014, lord2014}. However, the presence of
the artificially randomized internal step
$X_{h,k}^{n,\tau}$ allows us to prove a higher order
temporal convergence rate compared to standard results in the literature.

In fact, under the assumptions of Section~\ref{sec:str_error}, the mild solution
\eqref{eq:mild} to the SEE \eqref{eq:SPDE} enjoys the regularity
$X \in C([0,T];L^p(\Omega_W;\dot{H}^{1+r})) \cap
C^{\frac{1}{2}}([0,T];L^p(\Omega_W;H))$, where $p \in [2,\infty)$ is determined
by the corresponding integrability of the initial value $X_0$ and the spaces
$\dot{H}^r = \dom( (-A)^{\frac{r}{2}})$, $r \in [0,1)$, of fractional powers of
the generator $-A$ measure the spatial regularity of $X$. Then,
according to Theorem~\ref{th:main} below, there exists $C \in (0,\infty)$
such that for every $h \in (0,1)$ and $k \in (0,T)$ we have
\begin{align}
  \label{eq1:error}
  \max_{n \in \{1,\ldots,N_k\}} \| X(t_n) - X_{k,h}^n \|_{L^p(\Omega;H)}
  \le C \big(h^{1 + r} + k^{\frac{1}{2} + \min( \frac{r}{2}, \gamma)} \big),
\end{align}
where the value of the parameter $\gamma \in (0,\frac{1}{2}]$ is determined by
the regularity of $f$ and $g$ with respect to the time variable $t$.

Note that the standard error estimate for Euler--Maruyama-type methods is only
of order $\mathcal{O}(h^{1+r} + k^{\frac{1}{2}})$ under the same regularity
conditions. The same temporal convergence rate as in
\eqref{eq1:error} is only recovered for SEEs with additive
noise if the linearly-implicit Euler--Galerkin finite element method is
treated as a Milstein-type scheme, see \cite{wang2017}.
This is possible since Milstein-type
schemes coincide with Euler--Maruyama-type methods in this case.
However, as already mentioned above, the error analysis of 
Milstein--Galerkin finite element methods typically requires the
differentiability of the nonlinearity $f$ which is not required for the method
\eqref{eq:scheme}.  

The remainder of this paper is organized as follows. After collecting some
notation and auxiliary results from stochastic analysis 
in Section~\ref{sec:notation}, we give a more precise statement of all
assumptions imposed on the SEE \eqref{eq:SPDE} 
in Section~\ref{sec:str_error}. In addition, we
also state the main result \eqref{eq1:error} in this section, see
Theorem~\ref{th:main}. For the proof of this
error estimate we apply the same methodology as in \cite{kruse2014b}. To this
end, we show in Section~\ref{sec:bistab} that the method \eqref{eq:scheme} is
\emph{bistable}. The notion of bistability admits a two-sided estimate of the
error \eqref{eq1:error} in terms of the local truncation error measured with
respect to a stochastic version of Spijker's norm. This local error is then
estimated in Section~\ref{sec:consistency}. In Section~\ref{sec:noise} we
incorporate an approximation of the Wiener noise
into the method \eqref{eq:scheme}.
In Section~\ref{sec:examples} we finally apply the method
\eqref{eq:scheme} for the numerical solution of a more explicit stochastic
partial differential equation.


\section{Notation and preliminaries}
\label{sec:notation}

In this section we explain the notation used throughout this paper and collect
some auxiliary results from stochastic analysis.

First, we denote by $\N$ the set of all positive integers, while $\N_0 = \{0 \}
\cup \N$. 
Moreover, let $(E_i, \|\cdot\|_{E_i})$, $i \in\{1,2\}$,
be two normed $\R$-vector spaces. Then,
we denote by $\mathcal{L}(E_1, E_2)$ the set of all bounded linear operators
mapping from $E_1$ to $E_2$ endowed with the usual operator norm. If $E_1 =
E_2$ we write $\mathcal{L}(E_1) = \mathcal{L}(E_1,E_1)$.
If $E_i$, $i \in \{1,2\}$, are separable Hilbert spaces, then we denote by
$\mathcal{L}_2(E_1,E_2)$ the set of all Hilbert--Schmidt operators mapping from
$E_1$ to $E_2$. Recall that the Hilbert--Schmidt norm of $L \in
\mathcal{L}_2(E_1,E_2)$ is given by
\begin{align*}
  \| L \|_{\mathcal{L}_2(E_1,E_2)} = \Big( \sum_{j = 1}^\infty \| L
  \varphi_j\|_{E_2}^2 \Big)^{\frac{1}{2}},
\end{align*}
where $(\varphi_j)_{j \in \N} \subset E_1$ is an arbitrary orthonormal basis.
As mentioned in the introduction we use 
the short hand notation $\mathcal{L}_2^0 =\mathcal{L}_2(Q^{\frac{1}{2}}(U),H)$
and $\mathcal{L}_2:=\mathcal{L}_2(U,H)$. 

Let us also recall a few function spaces which play an important role
in this paper. As usual, we denote by $L^p(0,T;E)$, $p \in [1,\infty)$, the 
space of all $p$-fold integrable mappings $v \colon [0,T] \to E$ with
values in a Banach space $(E, \| \cdot \|_E)$ endowed with the standard norm. When $E=\mathbb{R}$, we write $L^p(0,T)$ for short.

We mostly measure temporal regularity of the exact solution and the coefficient
functions in terms of H\"older continuity, that is with respect to the norm
\begin{align*}
  \| v \|_{C^{\alpha}([0,T];E)} = \sup_{t \in [0,T]} \| v(t) \|_E +
  \sup_{\genfrac{}{}{0pt}{}{t_1, t_2 \in [0,T]}{t_1 \neq t_2}} 
  \frac{\| v(t_1) - v(t_2) \|_E}{|t_1 - t_2|^\alpha}, 
\end{align*}
where $\alpha \in (0,1]$ denotes the H\"older exponent. The space of all
$\alpha$-H\"older continuous mappings taking values in $E$
is denoted by $C^\alpha([0,T];E)$.

For the same purpose we also make use of the family
$W^{\nu,p}(0,T;E) \subset L^p(0,T;E)$ of (fractional) Sobolev 
spaces. Recall that for $p \in [1,\infty)$ and $\nu = 1$ the Sobolev space
$W^{1,p}(0,T;E)$ is endowed with the norm
\begin{align}
  \label{eq:Sobol}
  \| v \|_{W^{1,p}(0,T;E)} 
  = \Big( \int_0^T \| v(t) \|^p_E \diff{t} + \int_0^T \| v'(t)
  \|^p_E \diff{t} \Big)^{\frac{1}{p}},
\end{align}
where $v' \in L^p(0,T;E)$ denotes the weak derivative of 
$v \in W^{1,p}(0,T;E)$. Moreover, for $p \in [1,\infty)$ and $\nu \in (0,1)$
the Sobolev--Slobodeckij norm is given by
\begin{align}
  \label{eq:fracSobol}
  \| v \|_{W^{\nu,p}(0,T;E)} 
  = \Big( \int_0^T \| v(t) \|^p_E \diff{t} + \int_0^T \int_0^T \frac{\|v(t_1) -
  v(t_2) \|^p_E}{|t_1 - t_2|^{1 + \nu p}} \diff{t_2} \diff{t_1}
  \Big)^{\frac{1}{p}}.
\end{align}
Further details on fractional Sobolev spaces are found, for
example, in \cite{dinezza2012} and \cite{simon1990}.

Our numerical method \eqref{eq:scheme} yields a discrete-time stochastic
process defined on the product probability space 
\begin{align}
  \label{eq:Omega}
  (\Omega, \F, \P) := (\Omega_{W}\times\Omega_{\tau}, \F^W\otimes \F^{\tau},
  \P_W \otimes \P_{\tau}),
\end{align}
where the corresponding expectations are denoted by $\mathbb{E}_W$ and
$\mathbb{E}_\tau$. The additional random input $\tau = (\tau_n)_{n \in \N}$ 
in \eqref{eq:scheme} induces a natural filtration $(\F_n^\tau)_{n \in \N_0}$ on
$(\Omega_\tau, 
\F^\tau, \P_\tau)$ by setting $\F_0^\tau := \{ \emptyset, \Omega_\tau \}$ and
$\F_n^\tau := \sigma \{ \tau_j \, : \, 1 \le j \le n \}$ for $n \in \N$.

Moreover, for each $k \in (0,T)$ let 
\begin{align}
  \label{eq:pi_k}
  \pi_k := \{ t_n = nk \, : \, n = 0,1,\ldots,N_k\} \subset [0,T]
\end{align}
be the set of temporal grid points with equidistant step size $k$.
Hereby, $N_k \in \N$ is uniquely determined by $t_{N_k} = N_k k \le T <
(N_k + 1)k$. For each temporal grid $\pi_k$ a 
discrete-time filtration $(\F^{\pi_k}_n)_{n \in \{0,\ldots,N_k\}}$
on $(\Omega, \F, \P)$ is given by 
\begin{align}
  \label{eq:filtration}
  \F^{\pi_k}_n := \F^W_{t_n} \otimes \F^{\tau}_n, \quad \text{ for } n \in
  \{0,1,\ldots,N_k\}.
\end{align}

As a useful estimate for higher moments of stochastic integrals, a particular
version of a Burkholder--Davis--Gundy-type inequality is presented here for
later use. The proposition follows directly from \cite[Lemma~7.2]{daprato1992}.

\begin{prop}
  \label{prop:BDG} 
  For every $p\in [2,\infty)$ there exists a constant $C_p \in [0,\infty)$ 
  such that for all $s,t \in [0,T]$, $s < t$, and for all
  $(\F_t^W)_{t \in [0,T]}$-predictable stochastic processes 
  $Y \colon [0,T] \times \Omega_W \to \mathcal{L}_2^0$ satisfying  
  \begin{align*}
    \Big( \int_{s}^{t} \|Y(\xi)\|^2_{L^p(\Omega_W;\mathcal{L}_2^0)} \diff{\xi}
    \Big)^{\frac{1}{2}} < \infty, 
  \end{align*}
  we have
  \begin{align*}
    \Big\| \int_{s}^{t} Y(\xi) \diff{W(\xi)} 
    \Big\|_{L^p(\Omega_W;H)}
    \leq C_p \Big( \int_{s}^{t}
    \|Y(\xi)\|^2_{L^p(\Omega_W;\mathcal{L}_2^0)} \diff{\xi} \Big)^{\frac{1}{2}}.
  \end{align*}
\end{prop}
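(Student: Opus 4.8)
The plan is to reduce the assertion to the classical Burkholder--Davis--Gundy inequality for Hilbert space valued stochastic integrals and then to interchange the spatial $L^p(\Omega_W)$-norm with the temporal integral by means of Minkowski's integral inequality. Concretely, I would first invoke \cite[Lemma~7.2]{daprato1992}, which yields, under the stated predictability and integrability assumptions on $Y$, a constant $C_p \in [0,\infty)$ with
\begin{align*}
  \E_W \Big[ \Big\| \int_s^t Y(\xi) \diff{W(\xi)} \Big\|^p \Big]
  \le C_p^p \, \E_W \Big[ \Big( \int_s^t \| Y(\xi) \|_{\mathcal{L}_2^0}^2 \diff{\xi} \Big)^{\frac{p}{2}} \Big].
\end{align*}
Taking $p$-th roots, this is exactly the estimate
\begin{align*}
  \Big\| \int_s^t Y(\xi) \diff{W(\xi)} \Big\|_{L^p(\Omega_W;H)}
  \le C_p \Big\| \Big( \int_s^t \| Y(\xi) \|_{\mathcal{L}_2^0}^2 \diff{\xi} \Big)^{\frac{1}{2}} \Big\|_{L^p(\Omega_W)},
\end{align*}
so that it remains only to bound the right-hand side by the quantity appearing in the proposition.

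For the second step I would rewrite the $L^p(\Omega_W)$-norm of the square root as the square root of an $L^{p/2}(\Omega_W)$-norm, namely
\begin{align*}
  \Big\| \Big( \int_s^t \| Y(\xi) \|_{\mathcal{L}_2^0}^2 \diff{\xi} \Big)^{\frac{1}{2}} \Big\|_{L^p(\Omega_W)}^2
  = \Big\| \int_s^t \| Y(\xi) \|_{\mathcal{L}_2^0}^2 \diff{\xi} \Big\|_{L^{\frac{p}{2}}(\Omega_W)}.
\end{align*}
Since $p \in [2,\infty)$ guarantees $\frac{p}{2} \ge 1$, Minkowski's integral inequality applies to the $L^{p/2}(\Omega_W)$-norm of the time integral and gives
\begin{align*}
  \Big\| \int_s^t \| Y(\xi) \|_{\mathcal{L}_2^0}^2 \diff{\xi} \Big\|_{L^{\frac{p}{2}}(\Omega_W)}
  \le \int_s^t \big\| \| Y(\xi) \|_{\mathcal{L}_2^0}^2 \big\|_{L^{\frac{p}{2}}(\Omega_W)} \diff{\xi}
  = \int_s^t \| Y(\xi) \|_{L^p(\Omega_W;\mathcal{L}_2^0)}^2 \diff{\xi}.
\end{align*}
Combining the last two displays and taking square roots then produces precisely the claimed inequality, while the finiteness of the right-hand side ensures that the integrability hypothesis of \cite[Lemma~7.2]{daprato1992} is fulfilled.

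I do not expect a genuine obstacle here, since the statement is essentially a repackaging of the standard BDG inequality into a form in which the spatial integrability is taken \emph{before} the temporal integration. The only point that requires care is the exponent bookkeeping: the restriction $p \ge 2$ is exactly what makes $\frac{p}{2} \ge 1$, which is indispensable for Minkowski's integral inequality to run in the direction we need. I would additionally confirm that the map $(\xi,\omega) \mapsto \| Y(\xi) \|_{\mathcal{L}_2^0}^2$ is jointly measurable, which follows from the predictability of $Y$, so that the integral inequality is legitimately applicable.
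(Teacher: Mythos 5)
Your argument is correct and coincides with the paper's, which simply observes that the proposition ``follows directly from \cite[Lemma~7.2]{daprato1992}''; your explicit Minkowski-integral-inequality step (valid precisely because $\frac{p}{2}\ge 1$) is the standard interchange of the $L^p(\Omega_W)$-norm with the temporal integral that this reduction requires. Nothing further is needed.
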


A further important tool is the following Burkholder--Davis--Gundy-type
inequality for discrete-time martingales with values in 
a Hilbert space. For a proof we refer to \cite[Theorem 3.3]{burkholder1991}:

\begin{prop}
  \label{prop:BDG_discrete} 
  For every $p\in [1,\infty)$ there exist constants 
  $c_p, C_p \in [0,\infty)$ such that for every discrete-time $H$-valued
  martingale $(Y^n)_{n\in \mathbb{N}_0}$ and for every $n \in \N_0$ we have  
  \begin{align*}
    c_p \big\| [Y]_{n}^{\frac{1}{2}} \big\|_{L^p(\Omega)}
    \leq \max_{j\in \{0,\ldots,n\} } \big\|Y^{j} \big\|_{L^p(\Omega;H)}
    \leq C_p \big\| [Y]_{n}^{\frac{1}{2}} \big\|_{L^p(\Omega)},
  \end{align*}
  where $[Y]_n = \|Y^{0}\|^2 + \sum_{k=1}^{n} \|Y^{k}-Y^{k-1}\|^2$
  is the \emph{quadratic variation} of $(Y^n)_{n \in \N_0}$.
\end{prop}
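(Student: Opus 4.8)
The plan is to deduce the two-sided estimate from a distributional (good-$\lambda$) comparison of the maximal process $Y^*_n := \max_{j \in \{0,\ldots,n\}} \|Y^j\|$ with the square function $S_n := [Y]_n^{\frac{1}{2}}$, following Burkholder's method. Writing $d_0 := Y^0$ and $d_k := Y^k - Y^{k-1}$ for the martingale increments, so that $[Y]_n = \sum_{k=0}^n \|d_k\|^2$, the first observation is that the case $p = 2$ is essentially an identity: the increments are orthogonal in $L^2(\Omega;H)$ by the martingale property, whence $\E\|Y^n\|^2 = \sum_{k=0}^n \E\|d_k\|^2 = \E[Y]_n$. Together with Doob's $L^2$-maximal inequality this already gives both bounds for $p = 2$ and indicates that the essential mechanism is the orthogonality supplied by the inner product of $H$. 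The reason for passing to a distributional comparison is that it treats the whole range $p \in [1,\infty)$ at once, including the endpoint $p = 1$, where Doob's inequality is no longer available.

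For the upper bound $\|Y^*_n\|_{L^p(\Omega)} \le C_p \|S_n\|_{L^p(\Omega)}$ I would establish a good-$\lambda$ inequality of the form
\begin{align*}
  \P\big( Y^*_n > \beta\lambda, \ S_n \le \delta\lambda \big)
  \le \varepsilon(\delta)\, \P\big( Y^*_n > \lambda \big),
  \qquad \lambda > 0,
\end{align*}
with a fixed $\beta > 1$ and $\varepsilon(\delta) \to 0$ as $\delta \to 0$, and then integrate it against $p\lambda^{p-1} \diff{\lambda}$ over $(0,\infty)$; after a routine truncation ensuring the norms involved are finite, this yields the desired $L^p$-bound with a constant depending only on $p$. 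The good-$\lambda$ inequality itself is proven by a stopping-time argument: one introduces the first times at which $\|Y^\cdot\|$ exceeds the levels $\lambda$ and $\beta\lambda$, together with a stopping time that halts as soon as $S$ grows beyond $\delta\lambda$, and controls the increment of the stopped, shifted martingale between these times in $L^2$. This is exactly where the Hilbert structure enters, through the conditional orthogonality $\E[(d_j,d_k)\,|\,\mathcal{F}_{k-1}] = 0$ for $j < k$ and the resulting second-moment identity for stopped martingales. The lower bound $c_p \|S_n\|_{L^p(\Omega)} \le \|Y^*_n\|_{L^p(\Omega)}$ follows from the symmetric good-$\lambda$ inequality, obtained by the analogous argument with the roles of $Y^*$ and $S$ exchanged.

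I expect the main obstacle to be precisely this stopping-time estimate, that is, bounding the probability that $\|Y^\cdot\|$ makes a large excursion across the level $\beta\lambda$ while $S$ stays below $\delta\lambda$. The estimate rests on applying the $L^2$-identity to the martingale restarted at the relevant stopping time and then invoking Chebyshev's inequality on the event under consideration, and the delicate points are to verify the measurability of all stopping times with respect to the underlying filtration and to check that $\beta$ and $\delta$ may be chosen independently of $n$, so that the constants $c_p, C_p$ depend on $p$ alone. Once the good-$\lambda$ inequalities are available, the passage to $L^p$ is routine, and since every step uses $H$ only through its inner product the argument is dimension-free and applies verbatim to an arbitrary separable Hilbert space. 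For the full details we refer to \cite{burkholder1991}.
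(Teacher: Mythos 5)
The paper offers no proof of this proposition at all---it is imported verbatim from Burkholder's Saint-Flour notes with the citation \cite[Theorem~3.3]{burkholder1991}---so there is no internal argument to compare yours against; what you have sketched is, in effect, an outline of the proof of the cited result itself. As such it is correct and follows the classical route: the $L^2$-identity coming from the conditional orthogonality $\E[(d_j,d_k)\mid\F_{k-1}]=0$ (the only place the Hilbert structure of $H$ genuinely enters), the two good-$\lambda$ inequalities obtained by restarting the stopped martingale and applying Chebyshev, and integration against $p\lambda^{p-1}\diff{\lambda}$ after a truncation securing a priori finiteness; this machinery does cover the endpoint $p=1$, where Doob is unavailable. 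Two small confirmations that the delicate points you flag go through: in the direction bounding $Y^*_n$ by $S_n$ the individual jumps are automatically controlled on the event $\{S_n\le\delta\lambda\}$ because $\|d_k\|\le S_n$, and in the reverse direction they are bounded by $2Y^*_n$, so no separate maximal-jump term is needed in the good-$\lambda$ inequalities; and $\beta$ and $\delta$ (the latter chosen, depending on $p$, so that $\varepsilon(\delta)<\tfrac12\beta^{-p}$ in the absorption step) are independent of $n$ and of the martingale, so the constants depend on $p$ alone. Since the paper treats the result as a black box, either presentation is acceptable; your sketch has the minor advantage of making explicit that the argument is dimension-free and applies to the $L^p(\Omega_\tau;H)$-valued martingales to which the proposition is later applied $\omega$-wise.
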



\section{Assumptions and main result}
\label{sec:str_error}

In this section we collect all essential conditions on the stochastic evolution
equation \eqref{eq:SPDE}. Then the main result is stated.

\begin{assumption}
  \label{as:A}
  The linear operator $A \colon \dom(A) \subset H \to H$ is densely defined,
  self-adjoint, and positive definite with compact inverse.
\end{assumption}

Assumption \ref{as:A} implies the existence of a positive, increasing
sequence $(\lambda_i)_{i\in \mathbb{N}} \subset \R$ such that 
$0<\lambda_1 \le \lambda_2 \le \ldots $ with $\lim_{i\to 
\infty}\lambda_i = \infty$, and of an orthonormal basis $(e_i)_{i\in
\mathbb{N}}$ of $H$ such that $A e_i = \lambda_i  e_i$ for every $i \in
\mathbb{N}$.

In addition, it also follows from Assumption~\ref{as:A} that $-A$ is the
infinitesimal generator of an analytic semigroup $(S(t))_{t \in [0,\infty)}
\subset \mathcal{L}(H)$ of contractions. More precisely, the family
$(S(t))_{t \in [0,\infty)}$ enjoys the properties
\begin{align*}
  S(0) &= \mathrm{Id} \in \mathcal{L}(H),\\
  S(s + t) &= S(s) \circ S(t) = S(t) \circ S(s), 
  \quad \text{for all } s,t \in [0,\infty),
\end{align*}
and
\begin{align}
  \label{eq:stab_S}
  \sup_{t \in [0,\infty)} \| S(t) \|_{\mathcal{L}(H)} \le 1.  
\end{align}
A more detailed account on the theory of linear semigroups is found in
\cite{pazy1983}. 

Further, let us introduce fractional powers of $A$, which are used to measure
the (spatial) regularity of the mild solution \eqref{eq:mild}. For any $r\geq
0$ we define the operator $A^{\frac{r}{2}} \colon \dom(A^{\frac{r}{2}}) =
\{x\in H \, : \, \sum_{j=1}^{\infty} \lambda_j^r (x,e_j)^2 < \infty \}
\subset H \to H$ by 
\begin{equation}
  \label{eqn:fractional_r}
  A^{\frac{r}{2}} x
  := \sum_{j=1}^{\infty} \lambda_j^{\frac{r}{2}} (x,e_j) e_j,
  \quad \text{for all } x \in \dom(A^{\frac{r}{2}}).
\end{equation}
Then, by setting $(\dot{H}^r,(\cdot,\cdot)_r, \|\cdot\|_r) :=
(\dom(A^{\frac{r}{2}}), (A^{\frac{r}{2}} \cdot, A^{\frac{r}{2}}\cdot),
\|A^{\frac{r}{2}}\cdot\|)$, $r \in [0,\infty)$, we obtain a family of
separable Hilbert spaces.

\begin{remark}
  The assumption on $A$ can be relaxed such that $A$ is not necessarily 
  self-adjoint. 
  In that case the fractional powers of $A$ and the
  spaces $\dot{H}^r$ can be defined in a different way. For instance,
  we refer to \cite[Section~2.6]{pazy1983}. 
  For the validity of our main result Theorem~\ref{th:main} it is then crucial
  to find a suitable replacement for the assertions of
  Lemma~\ref{lm:S{k,h}(t)} whose proof depends on the self-adjointness of $A$.
  For example, we refer to \cite[Theorem 
  9.3]{thomee2006} for such error estimates in the non-self-adjoint case.
  Compare further with \cite{lord2013} for an SPDE related result.
\end{remark}

After these preparations we are able to state the assumptions on the initial
condition $X_0$ as well as on the drift and diffusion coefficient functions.

\begin{assumption}
  \label{as:ini}
  There exist $p \in [2,\infty)$ and $r \in [0,1]$ such that the initial value
  $X_0 \colon \Omega_W \to H$ satisfies $X_0 \in L^{p}(\Omega_W, \F_0^W,\P_W;
  \dot{H}^{1+r})$.
\end{assumption}

\begin{assumption}
  \label{as:f}
  The mapping $f \colon [0,T] \times H \to H$ is continuous.
  Moreover, there exist $\gamma \in (0,\frac{1}{2}]$ and $C_f \in
  (0,\infty)$ such that 
  \begin{align*}
    \| f(t,u_1) - f(t,u_2) \| &\le C_f \|u_1 - u_2\|,\\
    \| f(t_1,u) - f(t_2,u) \| &\le C_f \big(1 + \|u\| \big) |t_1 -
    t_2|^\gamma
  \end{align*}
  for all $u, u_1, u_2 \in H$ and $t, t_1,t_2 \in [0,T]$.
\end{assumption}

From Assumption~\ref{as:f} we directly deduce a linear growth bound
of the form
\begin{align}
  \label{eq3:linear_f}
  \|f(t,u)\| \leq \hat{C}_f ( 1 + \|u\| ),\quad \text{for all }
  t \in [0,T],\, u \in H,
\end{align}
where $\hat{C}_f:=\|f(0,0)\|+C_f(1 +  T^\gamma)$. Moreover, we emphasize that
the regularity of $f$ with respect to $t$ is even weaker than in
\cite[Assumption~3.1]{kruse2014} for the linearly-implicit Euler--Galerkin
finite element method. We refer to Section~\ref{sec:examples} for a class of
mappings satisfying Assumption~\ref{as:f}.

\begin{assumption}
  \label{as:g}
  The mapping $g \colon [0,T] \to \mathcal{L}_{2}^0$ is continuous.
  Moreover, there exist $p \in [2,\infty)$,
  $r \in [0,1]$, $\gamma \in (0,\frac{1}{2}]$, and $C_g \in
  (0,\infty)$ such that
  \begin{align*}
    \| g \|_{C^{\frac{1}{2}}([0,T];\mathcal{L}_2^0)}
    + \| A^{\frac{r}{2}} g \|_{C([0,T];\mathcal{L}_{2}^0)} 
    + \| g \|_{W^{\frac{1}{2} + \gamma,p}(0,T;\mathcal{L}_2^0)} 
    &\le C_g.
  \end{align*}
\end{assumption}

Assumptions~\ref{as:A} to \ref{as:g} with $r\in [0,1)$ and $p\in
[2,\infty)$ are sufficient to ensure the existence of a unique mild solution
$X \colon [0,T] \times \Omega_W \to H$ to the stochastic evolution equation
\eqref{eq:SPDE} with
\begin{equation}
  \label{eq:mild_max}
  \sup_{t\in [0,T]} \E_W\big[ \|X(t)\|_{1+r}^{p} \big] < \infty,
\end{equation}
and there exists a constant $C$ depending on $r$ and $p$ such that
\begin{equation}
  \label{eq:mild_diff}
  \big( \E_W \big[ \|X(t_1)-X(t_2)\|_r^{p} \big] \big)^{\frac{1}{p}}
  \leq C|t_1-t_2|^{\frac{1}{2}},
\end{equation}
for each $t_1, t_2 \in [0,T]$. For proofs of these regularity results 
we refer, for instance, to \cite[Theorem~2.27]{kruse2014} and
\cite[Theorem~2.31]{kruse2014}.

Next, we formulate the assumption on the spatial discretization.
To this end, let $(V_h)_{h \in (0,1)} \subset \dot{H}^1$ be a family of
finite dimensional subspaces. Then, we introduce the 
\emph{Ritz projector} $R_h \colon \dot{H}^{1}\to V_h$
as the orthogonal projector onto $V_h$ with respect to the
inner product $(\cdot,\cdot)_1$. To be more precise, the Ritz projector is
given by 
\begin{equation}
  (R_h x, y_h)_1 = (x,y_h)_1 \quad \text{for all } x \in \dot{H}^1,\ y_h\in
  V_h. 
\end{equation}
The following assumption is used to quantify the speed of convergence
with respect to the spatial parameter $h \in (0,1)$.

\begin{assumption}
  \label{as:Ritz}
  Let a sequence $(V_h)_{h\in (0,1)}$ of finite dimensional subspaces of
  $\dot{H}^1$ be given such that there exists a constant $C \in (0,\infty)$ with 
  \begin{equation*}
    \|R_h x -x\| \leq Ch^s \|x\|_s \quad \text{ for all }
    x\in \dot{H}^s, \ s\in \{1,2\}, \ h \in (0,1).
  \end{equation*}
\end{assumption}

Similar estimates are obtained for the approximation of $x \in
\dot{H}^{1 + r}$, $r \in [0,1]$, by interpolation. A typical example of a
spatial discretization satisfying Assumption~\ref{as:Ritz} is the spectral
Galerkin method. This method is obtained by setting $h = \frac{1}{N}$, $N \in
\N$, and $V_h := \mathrm{span}\{ e_j \, : \, j = 1,\ldots,N \}$, where
$(e_j)_{j \in \N}$ denotes the family of eigenfunctions of $A$. For more details
see \cite[Example~3.7]{kruse2014}. A further example is the standard
finite element method from \cite{thomee2006} as we will discuss in
Section~\ref{sec:examples}.

We are now well-prepared to formulate the main result of this paper.
The proof is deferred to the end of Section~\ref{sec:consistency}.

\begin{theorem}
  \label{th:main}
  Let Assumptions~\ref{as:A} to \ref{as:Ritz} be fulfilled for some $p \in
  [2,\infty)$, $r \in [0,1)$, and $\gamma \in (0,\frac{1}{2}]$. Then there
  exists $C \in (0,\infty)$ such that for every $h \in (0,1)$ and
  $k \in (0,T)$ 
  \begin{align*}
    &\max_{n \in \{0,\ldots,N_k\}}
    \big\| X(t_n) - X_{k,h}^n \big\|_{L^p(\Omega;H)} \\
    &\; \le C \big(1 + \| X \|_{C([0,T];L^p(\Omega_W;\dot{H}^{1+r}))}
    + \| X \|_{C^{\frac{1}{2}}([0,T];L^p(\Omega_W;H))} \big)
    \big( h^{1+r} + k^{\frac{1}{2} + \min(\frac{r}{2}, \gamma)} \big),
  \end{align*}
  where $X$ denotes the mild solution \eqref{eq:mild} to the stochastic
  evolution equation \eqref{eq:SPDE} and $(X_{k,h}^n)_{n \in \{0,\ldots,N_k\}}$
  denotes the stochastic process generated by the randomized Galerkin finite
  element method \eqref{eq:scheme}.
\end{theorem}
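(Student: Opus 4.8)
The plan is to follow the bistability-plus-consistency paradigm announced in the introduction, treating Theorem~\ref{th:main} as a stochastic analogue of the Lax equivalence principle. First I would rewrite the scheme \eqref{eq:scheme} as a discrete variation-of-constants formula. Solving the second (non-random-node) stage for $X_{k,h}^n$ introduces the discrete resolvent $B_{k,h} := (\id + k A_h)^{-1}$ on $V_h$ and its powers, which play the role of a discrete semigroup. Iterating the recursion yields
\begin{align*}
  X_{k,h}^n = B_{k,h}^n P_h X_0 - k \sum_{j=1}^n B_{k,h}^{\,n-j+1} P_h f\big(t_j^\tau, X_{k,h}^{j,\tau}\big) + \sum_{j=1}^n B_{k,h}^{\,n-j+1} P_h g(t_j^\tau)\, \Delta_k W(t_{j-1}),
\end{align*}
the exact analogue of the mild solution \eqref{eq:mild}. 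Comparing this with \eqref{eq:mild} evaluated at $t_n$, and inserting a suitable projection $R_h X(t_n)$ of the exact solution into the recursion, defines the local truncation error (residual) $\rho^n$.

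Next I would prove bistability in Section~\ref{sec:bistab}: there are constants independent of $h$ and $k$ so that $\max_n \|R_hX(t_n) - X_{k,h}^n\|_{L^p(\Omega;H)}$ is bounded above by the residual $(\rho^n)$ measured in a stochastic version of Spijker's norm. The upper (stability) bound is the part needed here. Its proof rests on two ingredients: the uniform stability and smoothing of the discrete semigroup $B_{k,h}^n$ (Lemma~\ref{lm:S{k,h}(t)}), and the discrete Burkholder--Davis--Gundy inequality (Proposition~\ref{prop:BDG_discrete}), which is exactly what allows the stochastic parts of the residual to be summed through their quadratic variation rather than naively in $L^1$ over the $N_k \sim 1/k$ time steps. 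The Lipschitz bound on $f$ (Assumption~\ref{as:f}) and a discrete Gronwall argument close the estimate and absorb the dependence on the internal stage $X_{k,h}^{j,\tau}$.

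The heart of the matter is the consistency analysis of Section~\ref{sec:consistency}, i.e.\ bounding the residual $(\rho^n)$ in the stochastic Spijker norm. I would decompose $\rho^n$ into a spatial part, a drift part, and a noise part. The spatial part is controlled by Assumption~\ref{as:Ritz} (interpolated to $\dot H^{1+r}$) together with \eqref{eq:mild_max}, producing the $h^{1+r}$ contribution. For the temporal contributions the gain over the Euler--Maruyama order $k^{1/2}$ comes from two independent mechanisms, whose smaller exponent $\min(\tfrac r2,\gamma)$ determines the rate. The factor $k^{r/2}$ stems from the analytic smoothing of the semigroup: estimates of the type $\|(\id - S(\delta))(-A)^{-r/2}\| \lesssim \delta^{r/2}$ convert the spatial regularity $X\in\dot H^{1+r}$ and $A^{r/2}g\in C([0,T];\mathcal L_2^0)$ (Assumptions~\ref{as:ini},~\ref{as:g}) into extra temporal order in both the drift and the noise terms. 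The factor $k^{\gamma}$ stems from the randomization: the crucial observation is that the randomized node $t_j^\tau \sim \mathcal U(t_{j-1},t_j)$ makes the drift quadrature \emph{conditionally unbiased},
\begin{align*}
  \E_\tau\big[ k\, f(t_j^\tau, u)\,\big|\, \F^{\pi_k}_{j-1}\big] = \int_{t_{j-1}}^{t_j} f(s,u)\diff{s},
\end{align*}
so that the leading quadrature errors form a martingale difference sequence with respect to $(\F^{\pi_k}_n)$. Under the discrete BDG inequality built into the Spijker norm these errors are summed with the characteristic square-root gain, turning the naive bound $N_k\cdot k^{1+\gamma}=\mathcal O(k^{\gamma})$ into $\mathcal O(k^{1/2+\gamma})$; the temporal Sobolev regularity $g\in W^{1/2+\gamma,p}$ plays the same role for the noise term.

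The main obstacle is precisely this consistency estimate at the random nodes. One must carefully split each local error into a conditionally mean-zero (martingale) part, to be handled by the discrete BDG inequality with the square-root gain, and a remainder that is genuinely of higher order; the mean-zero part must be isolated before any norm is taken, since a crude $L^p$ estimate would lose the extra factor $k^{1/2}$ and only reproduce the Euler--Maruyama rate. A secondary difficulty is the borderline singularity of $\|A^{1-r/2}S(t_n-\sigma)\|$ as $\sigma\uparrow t_n$ in the analytic-smoothing estimates, which must be integrated against the step size with care. Once the residual is bounded by $C(\ldots)(h^{1+r}+k^{1/2+\min(r/2,\gamma)})$, the bistability estimate immediately yields the asserted bound for $\max_n\|R_hX(t_n)-X_{k,h}^n\|$, and a final application of Assumption~\ref{as:Ritz} to replace $R_hX(t_n)$ by $X(t_n)$ completes the proof.
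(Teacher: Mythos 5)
Your proposal follows essentially the same route as the paper: bistability of the one-step form with respect to the stochastic Spijker norm, combined with a consistency estimate whose key ingredient is exactly the conditional unbiasedness of the randomized node, turning the drift quadrature errors into a martingale (in the $\tau$-variables) that the discrete Burkholder--Davis--Gundy inequality sums with the square-root gain, while the $k^{r/2}$ contribution comes from the smoothing estimates for $S(t)-\overline{S}_{k,h}(t)$. The only cosmetic difference is that you compare the scheme with $R_h X(t_n)$ and undo the Ritz projection at the end, whereas the paper plugs $X|_{\pi_k}$ directly into the residual operator and absorbs the spatial error through the semigroup approximation estimates; both lead to the same bound.
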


\begin{remark}
  \label{rem:Nemytskii}
  In order to obtain the same temporal order of convergence as in
  Theorem~\ref{th:main}, other results in the literature
  usually impose additional smoothness conditions on the
  nonlinearity. For instance, in \cite{jentzen2011}
  the authors require $f \in C^\infty_b(H;H)$, that is, $f$ is infinitely
  often Fr\'echet differentiable with bounded derivatives. 
  However, this condition is too restrictive for all SPDEs with a truly
  nonlinear Nemytskii operator. In \cite{jentzen2015, kruse2014b,
  leonhard2015, wang2015, wang2017}, this problem is circumvented by instead 
  requiring $f \in C^2_b(\dot{H}^{\beta};H)$ or
  $f \in C^2_b(H;\dot{H}^{-\beta})$ for $\beta \in (0,1]$. 
  Such conditions allow to treat some Nemytskii-type operators. In particular,
  we refer to \cite[Example~3.2]{wang2015}. However, the presence of the
  parameter $\beta$ often results into a lower temporal 
  convergence rate. For instance, the rate is only equal to $\frac{1}{2}$ if
  $\beta = 1$ in \cite{kruse2014b}.
\end{remark}


\section{Bistability}
\label{sec:bistab}

In this section we show that the randomized Galerkin finite element
method \eqref{eq:scheme} constitutes a \emph{bistable} numerical method in the
sense of \cite{kruse2014b}. More precisely, for each choice of
$h \in (0,1)$ and $k \in (0,T)$, we first observe that 
the scheme \eqref{eq:scheme} can be written as 
an abstract one-step method of the form 
\begin{align}
  \label{eq:onestep}
  \begin{cases}
    X_{k,h}^n &= S_{k,h} X_{k,h}^{n-1} + \Phi^n_{k,h}(X_{k,h}^{n-1}, \tau_n),
    \quad n \in \{1,\ldots,N_k\},\\
    X_{k,h}^{0} &= \xi_h,
  \end{cases}
\end{align}
in terms of a suitable family of linear operators $S_{k,h} \in
\mathcal{L}(H)$ and associated  
increment functions $\Phi_{k,h}^n \colon H \times [0,1]
\times \Omega_W \to H$. Then, we 
verify the conditions of a stability theorem from \cite{kruse2014b} that yields
two-sided stability bounds for general one-step methods of the form
\eqref{eq:onestep}.

For each $k \in (0,T)$ let $\pi_k := \{ t_n = nk \, : \, n = 0,1,\ldots,N_k\}
\subset [0,T]$ be the set of temporal grid points with equidistant step size
$k$ as defined in \eqref{eq:pi_k}.
As in the introduction, we denote by $P_h \colon H \to V_h$, $h \in
(0,1)$, the orthogonal projector onto the finite dimensional subspace $V_h \subset
\dot{H}^1$ with respect to the inner product in $H$.

In this situation, we define $\xi_h := P_h X_0 \in L^p(\Omega_W;H)$ as the
initial condition for the numerical scheme \eqref{eq:scheme}. 
Under Assumption~\ref{as:ini} with $p \in [2,\infty)$ 
it then holds 
\begin{align}
  \label{eq:cond_ini}
  \sup_{h \in (0,1)} \| \xi_h \|_{L^p(\Omega_W;H)} 
  \le \| X_0 \|_{L^p(\Omega_W;H)}
\end{align}
due to $\| P_h \|_{\mathcal{L}(H)} = 1$ for all $h \in (0,1)$.

Next, for each $h \in (0,1)$, we implicitly define a discrete version $A_h
\colon V_h \to V_h$ of the generator $A$ by the relationship
\begin{align*}
  ( A_h x_h, y_h ) = ( x_h, y_h )_{1}, \quad \text{ for all } x_h,
  y_h \in V_h.
\end{align*}
From Assumption~\ref{as:A} it then follows immediately that $A_h$ is symmetric
and positive definite. Moreover, for each $h \in (0,1)$ and $k \in (0,T)$ we
obtain a bounded linear operator $S_{k,h} \in \mathcal{L}(H)$ defined by
\begin{align}
  \label{eq:discOp}
  S_{k,h} := \big( \mathrm{Id} + k A_h \big)^{-1} P_h.
\end{align}
For the error analysis it is also convenient to introduce a piecewise constant
interpolation of $S_{k,h}$ to the whole time interval, which we denote by
$\overline{S}_{k,h} \colon [0,T] \to \mathcal{L}(H)$:  
For each $h \in (0,1)$ and $k \in (0,T)$ let $\overline{S}_{k,h}(t)
:= S_{k,h}^{N_k}$ for all $t \in [t_{N_k},T]$ and
\begin{align}
  \label{eq:discOpt}
  \overline{S}_{k,h}(t):= \big( \mathrm{Id} + k A_h \big)^{-j} P_h, \mbox{ if
  } t\in [t_{j-1},t_j) 
\end{align}
for $ j\in \{1,2,\ldots,N_k\}$.  The following lemma contains some useful
stability bounds for $S_{k,h}$ and $\overline{S}_{k,h}$ uniformly with respect
to the discretization parameters $h$ and $k$. For a proof and more general
versions of these estimates we refer to \cite[Lemma~7.3]{thomee2006}. 

\begin{lemma}
  \label{lem:estimate_Sdiscrete}
  Let Assumption~\ref{as:A} be satisfied. Then, the operator $S_{k,h}$ given in \eqref{eq:discOp} is well-defined for all $h \in (0,1)$ and $k \in (0,T)$. 
  Furthermore, we have 
  \begin{equation}
    \label{eq:S_k,hL}
     \sup_{k\in (0,T)} \sup_{h\in (0,1)} \|S_{k,h}\|_{\mathcal{L}(H)} \le 1.
  \end{equation}
  In addition, the continuous-time interpolation $\overline{S}_{k,h} \colon [0,T]
  \to \mathcal{L}(H)$ of $S_{k,h}$ is right-continuous with existing
  left-limits. It holds true that
  \begin{align}
    \label{eq:S_k,h}
    \sup_{k\in (0,T)} \sup_{h\in (0,1)} \sup_{t\in [0,T]}
    \| \overline{S}_{k,h}(t) \|_{\mathcal{L}(H)}
    \leq 1.
  \end{align}
\end{lemma}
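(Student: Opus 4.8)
The plan is to reduce the entire statement to finite-dimensional spectral considerations on $V_h$ together with a single energy estimate; the only genuine bookkeeping is the interaction with the projector $P_h$.

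First I would establish well-definedness. By the implicit relation $(A_h x_h, y_h) = (x_h, y_h)_1$ together with the self-adjointness and positive definiteness of $A$ from Assumption~\ref{as:A}, the operator $A_h \colon V_h \to V_h$ is symmetric and positive definite on the finite-dimensional space $V_h$, as already noted in the text. Consequently all eigenvalues of $A_h$ are strictly positive, so every eigenvalue $1 + k\mu$ of $\mathrm{Id} + k A_h$ is at least $1$ for each $k \in (0,T)$. Hence the restriction of $\mathrm{Id} + k A_h$ to $V_h$ is invertible, and $S_{k,h} = (\mathrm{Id} + k A_h)^{-1} P_h$ in \eqref{eq:discOp} defines a bounded linear operator on $H$ with range contained in $V_h$.

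The core of the argument is the contraction bound \eqref{eq:S_k,hL}. Rather than diagonalizing, I would use an energy estimate: fix $x \in H$ and set $w := S_{k,h} x \in V_h$, so that $w + k A_h w = P_h x$. Testing this identity against $w$ and using $(A_h w, w) = \|w\|_1^2 \ge 0$ yields
\begin{align*}
  \|w\|^2 \le \|w\|^2 + k \|w\|_1^2 = (P_h x, w) \le \|P_h x\| \, \|w\| \le \|x\| \, \|w\|,
\end{align*}
where the final step uses $\|P_h\|_{\mathcal{L}(H)} = 1$. Dividing by $\|w\|$ (the case $w = 0$ being trivial) gives $\|S_{k,h} x\| \le \|x\|$, which is \eqref{eq:S_k,hL}.

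Finally I would treat the interpolant $\overline{S}_{k,h}$. Since $P_h$ acts as the identity on $V_h$ and $(\mathrm{Id} + k A_h)^{-1}$ maps $V_h$ into itself, a short induction shows $S_{k,h}^j = (\mathrm{Id} + k A_h)^{-j} P_h$, so the formula \eqref{eq:discOpt} simply prescribes $\overline{S}_{k,h}(t) = S_{k,h}^j$ on each half-open interval $[t_{j-1}, t_j)$. Being constant on intervals that are closed on the left, $\overline{S}_{k,h}$ is right-continuous with existing left-limits by construction. The uniform bound \eqref{eq:S_k,h} then follows from submultiplicativity of the operator norm together with \eqref{eq:S_k,hL}, since $\|S_{k,h}^j\|_{\mathcal{L}(H)} \le \|S_{k,h}\|_{\mathcal{L}(H)}^j \le 1$ for every admissible $j$. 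I do not anticipate a real obstacle here; the only point requiring care is tracking the composition with $P_h$ when identifying the powers $S_{k,h}^j$, which is precisely what makes the piecewise-constant interpolation in \eqref{eq:discOpt} consistent with the single-step operator in \eqref{eq:discOp}.
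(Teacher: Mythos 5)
Your proof is correct. The paper itself does not prove this lemma; it simply cites \cite[Lemma~7.3]{thomee2006}, where the bound is obtained from the spectral calculus for the symmetric positive definite operator $A_h$: writing $S_{k,h}|_{V_h} = r(kA_h)$ with the rational function $r(\lambda) = (1+\lambda)^{-1}$, one uses $|r(\lambda)| \le 1$ for $\lambda \ge 0$ together with $\|P_h\|_{\mathcal{L}(H)} = 1$. Your energy estimate $\|w\|^2 + k\|w\|_1^2 = (P_h x, w) \le \|x\|\,\|w\|$ reaches the same conclusion without diagonalizing, and has the advantage of being self-contained and of extending verbatim to non-symmetric but coercive $A_h$, whereas the spectral route gives sharper information (e.g.\ the smoothing estimates \eqref{eq:F_k,h(t)1r}--\eqref{eq:F_k,h(t)2} of Lemma~\ref{lm:S{k,h}(t)}, which do rely on the functional calculus). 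Your identification $S_{k,h}^j = (\mathrm{Id}+kA_h)^{-j}P_h$ via $P_h|_{V_h} = \mathrm{Id}$ is exactly the point that makes \eqref{eq:discOpt} consistent with powers of \eqref{eq:discOp}, and the remaining claims (right-continuity of a piecewise constant function on left-closed intervals, submultiplicativity for \eqref{eq:S_k,h}) are handled correctly.
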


Now we are in a position to introduce the increment functions associated 
to the numerical method \eqref{eq:scheme}. For each $k \in (0,T)$, $h \in
(0,1)$, and $j \in \{1,\ldots,N_k\}$ we define $\Phi_{k,h}^j \colon H \times
[0,1] \times  \Omega_W \to H$ and $\Psi_{k,h}^j \colon H \times [0,1] \times
\Omega_W \to H$ by setting
\begin{align}
  \label{eq:Phi}
  \Phi_{k,h}^j(x,\tau) := -k S_{k,h} f\big(t_{j-1} + \tau k,
  \Psi^j_{k,h}(x,\tau)\big) 
  + S_{k,h} g(t_{j-1} + \tau k) \Delta_k W(t_{j-1})   
\end{align}
and
\begin{align}
  \label{eq:Psi}
  \Psi_{k,h}^j(x,\tau) := S_{\tau k,h} x - \tau k S_{\tau k,h} f(t_{j-1}, x ) 
  + S_{\tau k,h} g(t_{j-1}) \Delta_{\tau k} W(t_{j-1}) 
\end{align}
for all $x \in H$ and $\tau \in [0,1]$. We refer to \eqref{eq:defDW} for the
definition of the Wiener increments $\Delta_{k} W(t)$.

Observe that, under the assumptions of Section~\ref{sec:str_error},
for each $\tau \in [0,1]$ and $j \in \{1,\ldots,N_k\}$ the mapping $(x,\omega)
\mapsto \Phi_{k,h}^j(x,\tau)(\omega)$ is measurable with respect to
$\mathcal{B}(H) \otimes \F_{t_j}^W/ \mathcal{B}(H)$. Moreover, for each $x \in
H$ and almost all $\omega \in \Omega$ we have that the mapping $[0,1] \ni \tau
\mapsto \Phi^j(x,\tau)(\omega) \in H$ is continuous due to the pathwise
continuity of the Wiener process and the continuity of $f$ and $g$.

Altogether, this shows that the numerical method \eqref{eq:scheme} can
be rewritten as a one-step method of the form \eqref{eq:onestep}. The family of
random variables $(X_{k,h}^n)_{n \in \{0,\ldots,N_k\}}$, which is determined by
\eqref{eq:scheme}, is therefore a discrete-time stochastic process on the
product probability space $(\Omega, \F, \P)$ defined in \eqref{eq:Omega}.
Moreover, it is adapted to the filtration 
$(\F^{\pi_k}_n)_{n \in \{0,\ldots,N_k\}}$ from \eqref{eq:filtration}.

Let us now recall the notion of bistability from \cite{kruse2014b}.
For this, we first introduce a family of linear spaces consisting of all 
$(\F^{\pi_k}_n)_{n \in \{0,\ldots,N_k\}}$-adapted and $p$-fold integrable grid
functions on $\pi_k$. To be more precise, for $p \in [2,\infty)$ and $k \in
(0,T)$ we set $\mathcal{G}^p_k := \mathcal{G}^p_k (\pi_k,
L^p(\Omega;H))$ with
\begin{align*}
  \mathcal{G}^p_k := \big\{ (Z_k^n)_{n \in \{0,\ldots,N_k\}} \, : \, 
  Z_k^n \in L^p(\Omega, \F_n^{\pi_k}, \P; H) \text{ for all } 
  n \in \{0,1,\ldots,N_k\} \big\}.    
\end{align*}
In addition, we endow the spaces $\mathcal{G}_k^p$ with two different norms.
For arbitrary $Z_k = (Z_k^n)_{n \in \{0,\ldots,N_k\}} \in \mathcal{G}^p_k$
these norms are given by
\begin{align}
  \label{eq:norm1}
  \| Z_k \|_{\infty,p} := \max_{n \in \{0,\ldots,N_k\}} \| Z_k^n
  \|_{L^p(\Omega;H)}  
\end{align}
and, for each $h \in (0,1)$,
\begin{align}
  \label{eq:Spijker}
  \| Z_k \|_{S,p,h} := \| Z^0_k \|_{L^p(\Omega;H)} + \max_{n \in
  \{1,\ldots,N_k\}} \Big\| \sum_{j = 1}^n S_{k,h}^{n-j} Z_k^{j}
  \Big\|_{L^p(\Omega;H)},
\end{align}
where $S_{k,h}$ has been defined in \eqref{eq:discOp}. The norm $\| \cdot
\|_{S,p,h}$ is called (stochastic) \emph{Spijker norm}. Deterministic versions 
of this norm are used in numerical analysis for finite
difference methods, for instance, in \cite{spijker1968, spijker1971,
stummel1973}. In \cite{beyn2010, kruse2014b} a more detailed discussion is
given in the context of stochastic differential equations. 

The final ingredient for the introduction of the 
stability concept is then the following family of
\emph{residual operators} $\mathcal{R}_{k,h} \colon \mathcal{G}^p_k \to
\mathcal{G}_k^p$ associated to the numerical scheme \eqref{eq:scheme}. For each
$p \in [2,\infty)$, $k \in (0,T)$, and $h \in (0,1)$ the residual of an
arbitrary grid function $Z_k \in \mathcal{G}_k^p$ is given by
\begin{align}
  \label{eq:residual}
  \begin{cases}
    \mathcal{R}_{k,h}[Z_k](t_0) := Z_k^0 - \xi_h,&\\
    \mathcal{R}_{k,h}[Z_k](t_n) := Z_k^n - S_{k,h} Z_k^{n-1} -
    \Phi_{k,h}^n(Z_k^{n-1},\tau_n),& \quad n \in \{1,\ldots,N_k\}.
  \end{cases}
\end{align}
It is not immediately evident if the residual operators are actually
well-defined for every given $h \in (0,1)$ and $k \in (0,T)$.
From Theorem~\ref{th:stab} it follows that indeed
$\mathcal{R}_{k,h}[Z_k] \in \mathcal{G}_k^p$ for all $Z_k \in 
\mathcal{G}_k^p$ under Assumptions~\ref{as:A} to \ref{as:g}. 

The following definition of bistability is taken from 
\cite{kruse2014b}.

\begin{definition}
  The numerical scheme \eqref{eq:scheme} is called \emph{bistable} with respect
  to the norms $\| \cdot \|_{\infty,p}$ and $\| \cdot \|_{S,p,h}$ if there
  exists $p \in [2,\infty)$ such that the residual operators
  $\mathcal{R}_{k,h} \colon \mathcal{G}_{k}^p \to \mathcal{G}_k^{p}$ are
  well-defined and bijective for all $k \in (0,T)$ and $h \in (0,1)$. In
  addition, there exists $C_{\mathrm{Stab}} \in (0,\infty)$ such that
  for all $k \in (0,T)$, $h \in (0,1)$, and $Y_k, Z_k \in \mathcal{G}^p_k$ we
  have
  \begin{align}
    \label{eq:bistab}
    \begin{split}
      \frac{1}{C_{\mathrm{Stab}}} \big\| 
      \mathcal{R}_{k,h}[Y_k] - \mathcal{R}_{k,h}[Z_k] \big\|_{S,p,h}
      &\le \big\| Y_k - Z_k \big\|_{\infty,p}\\
      &\le C_{\mathrm{Stab}} \big\| 
      \mathcal{R}_{k,h}[Y_k] - \mathcal{R}_{k,h}[Z_k] \big\|_{S,p,h}.
    \end{split}
  \end{align}  
\end{definition}

Under the assumptions of Section~\ref{sec:str_error} the following lemma shows
that the family of increment functions $\Phi_{k,h}$ is bounded at $0 \in H$ and
in a certain sense Lipschitz continuous, uniformly with respect to the
discretization parameters $h \in (0,1)$ and $k \in (0,T)$. These estimates are
required for the stability theorem further below.

\begin{lemma}
  \label{lem:stab}
  Under Assumptions~\ref{as:A} to \ref{as:g} 
  there exist $C_{\Phi,0}, C_{\Phi,1}
  \in (0,\infty)$ with
  \begin{align}
    \label{eq:cond_Phi0}
    \sup_{h \in (0,1)} \sup_{k \in (0,T)} 
    \Big\| \sum_{j = m}^n S^{n-j}_{k,h}\Phi_{k,h}^j(0,\tau_j)
    \Big\|_{L^p(\Omega;H)} 
    \le C_{\Phi,0} (t_n - t_{m-1})^{\frac{1}{2}}
  \end{align}
  for all $n,m \in \{1,\ldots,N_k\}$ with $m \le n$. Moreover, 
  it holds true that
  \begin{align}
    \label{eq:cond_PhiLip}
    \begin{split}
      &\sup_{h \in (0,1)} \Big\| \sum_{j = 1}^n S_{k,h}^{n-j}
      \big( \Phi_{k,h}^j(Y_k^{j-1}, \tau_j) 
      - \Phi_{k,h}^j( Z_k^{j-1}, \tau_j) \big) \Big\|_{L^p(\Omega;H)}\\
      &\quad \le C_{\Phi,1} k \sum_{j = 1}^n  
      \big\| Y_k^{j-1} - Z_k^{j-1} \big\|_{L^p(\Omega;H)}
    \end{split}
  \end{align}
  for all $k \in (0,T)$, $Y_k, Z_k \in \mathcal{G}_k^p$, and
  $n \in \{1,\ldots,N_k\}$.
\end{lemma}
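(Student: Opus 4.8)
The plan is to treat the two assertions separately, exploiting that the proof of \eqref{eq:cond_PhiLip} is purely Lipschitz while \eqref{eq:cond_Phi0} requires a martingale estimate for the noise contribution. Throughout I would repeatedly use the contraction bound $\|S_{k,h}\|_{\mathcal{L}(H)} \le 1$ from Lemma~\ref{lem:estimate_Sdiscrete}, the linear growth \eqref{eq3:linear_f} and the Lipschitz property of $f$ from Assumption~\ref{as:f}, and the uniform bound $\|g(t)\|_{\mathcal{L}_2^0} \le C_g$ from Assumption~\ref{as:g}.

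For \eqref{eq:cond_Phi0} I would first insert $x = 0$, observing that $\Psi^j_{k,h}(0,\tau_j) = S_{\tau_j k,h} g(t_{j-1}) \Delta_{\tau_j k} W(t_{j-1})$, and then split the sum into a drift part $-k \sum_{j = m}^n S_{k,h}^{n-j+1} f(t_{j-1}+\tau_j k, \Psi^j_{k,h}(0,\tau_j))$ and a noise part $\sum_{j = m}^n S_{k,h}^{n-j+1} g(t_{j-1}+\tau_j k) \Delta_k W(t_{j-1})$. For the drift part, the triangle inequality together with $\|S_{k,h}\|_{\mathcal{L}(H)} \le 1$ and \eqref{eq3:linear_f} reduces the task to bounding $\|\Psi^j_{k,h}(0,\tau_j)\|_{L^p(\Omega;H)}$, which I would control by conditioning on $\tau_j = \tau$, writing the Wiener increment as a stochastic integral and invoking Proposition~\ref{prop:BDG} to obtain $C_p (\tau k)^{\frac{1}{2}} \|g(t_{j-1})\|_{\mathcal{L}_2^0} \le C_p C_g k^{\frac{1}{2}}$ uniformly in $\tau \in [0,1]$. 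Summing the resulting bound $\hat{C}_f(1 + C_p C_g k^{\frac{1}{2}})$ over the $n - m + 1$ terms yields an estimate of order $(n-m+1)k = t_n - t_{m-1}$, which is dominated by $T^{\frac{1}{2}}(t_n - t_{m-1})^{\frac{1}{2}}$.

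The main obstacle is the noise part, because the randomized node $\tau_j$ appears inside $g(t_{j-1}+\tau_j k)$ and destroys $(\F_t^W)$-predictability, so Proposition~\ref{prop:BDG} cannot be applied directly on the product space. I would resolve this by using the product structure $\E = \E_\tau \E_W$ and conditioning on $\tau = (\tau_j)_j$: for a fixed realization of $\tau$ the integrand $s \mapsto S_{k,h}^{n-j+1} g(t_{j-1}+\tau_j k)$ for $s \in [t_{j-1},t_j)$ is a piecewise constant, $(\F_t^W)$-predictable process, and the noise part equals $\int_{t_{m-1}}^{t_n} (\cdot)\diff{W(s)}$. Proposition~\ref{prop:BDG} then gives $\E_W\big[\|\cdot\|^p\big] \le C_p^p \big(\int_{t_{m-1}}^{t_n} \|S_{k,h}^{n-j+1} g(t_{j-1}+\tau_j k)\|_{\mathcal{L}_2^0}^2 \diff{s}\big)^{\frac{p}{2}} \le C_p^p C_g^p (t_n - t_{m-1})^{\frac{p}{2}}$ uniformly in $\tau$, where I again use $\|S_{k,h}\|_{\mathcal{L}(H)} \le 1$ and Assumption~\ref{as:g}. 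Taking $\E_\tau$ and the $p$-th root completes the bound on the noise part, and combining it with the drift estimate establishes \eqref{eq:cond_Phi0} with $C_{\Phi,0}$ depending only on $\hat{C}_f$, $C_g$, $C_p$, and $T$.

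For \eqref{eq:cond_PhiLip} the decisive simplification is that, since the additive noise term $S_{k,h} g(t_{j-1}+\tau_j k)\Delta_k W(t_{j-1})$ in \eqref{eq:Phi} does not depend on $x$, it cancels in the difference $\Phi^j_{k,h}(Y,\tau) - \Phi^j_{k,h}(Z,\tau)$; likewise the noise term in $\Psi^j_{k,h}$ cancels. Hence only the drift survives and no stochastic integral estimate is needed. Using the Lipschitz property of $f$ and $\|S_{\tau k,h}\|_{\mathcal{L}(H)} \le 1$, I would first show the pathwise stability bound $\|\Psi^j_{k,h}(Y,\tau) - \Psi^j_{k,h}(Z,\tau)\| \le (1 + C_f T)\|Y - Z\|$, and then, again by the Lipschitz property of $f$ and $\|S_{k,h}\|_{\mathcal{L}(H)} \le 1$, deduce $\|\Phi^j_{k,h}(Y,\tau) - \Phi^j_{k,h}(Z,\tau)\| \le k C_f (1 + C_f T)\|Y - Z\|$. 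Applying the triangle inequality to the sum together with $\|S_{k,h}^{n-j}\|_{\mathcal{L}(H)} \le 1$ then yields \eqref{eq:cond_PhiLip} with $C_{\Phi,1} = C_f(1 + C_f T)$.
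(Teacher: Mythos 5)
Your proposal is correct and follows essentially the same route as the paper: split \eqref{eq:cond_Phi0} into drift and noise parts, bound the drift via the linear growth of $f$ and a uniform $L^p$-bound on $\Psi_{k,h}^j(0,\tau_j)$, handle the noise part by conditioning on $\tau$ so that the piecewise constant integrand becomes $(\F^W_t)$-predictable and Proposition~\ref{prop:BDG} applies, and observe for \eqref{eq:cond_PhiLip} that the additive-noise terms cancel so only the Lipschitz estimates for $f$ and the contraction property of $S_{k,h}$ are needed, giving the same constant $C_{\Phi,1}=C_f(1+C_fT)$. One small slip: $\Psi_{k,h}^j(0,\tau_j)$ is not just the stochastic term but also contains $-\tau_j k\, S_{\tau_j k,h} f(t_{j-1},0)$, since $f(t_{j-1},0)$ need not vanish; this term is bounded by $\hat{C}_f T$ via \eqref{eq3:linear_f} and only enlarges the constant, exactly as in the paper's bound $\hat{C}_f T + C_p C_g T^{\frac{1}{2}}$.
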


\begin{proof}
  We first verify \eqref{eq:cond_Phi0}. From \eqref{eq:Phi} we obtain
  \begin{align*}
    &\Big\| \sum_{j = m}^n S^{n-j}_{k,h}\Phi_{k,h}^j(0,\tau_j)
    \Big\|_{L^p(\Omega;H)}\le \Big\| \sum_{j = m}^n k
    S^{n-j+1}_{k,h}f\big(t^\tau_j, \Psi_{k,h}^j(0,\tau_j)\big)
    \Big\|_{L^p(\Omega;H)}\\ 
    &\quad+\Big\| \sum_{j = m}^n S^{n-j+1}_{k,h}g(t^\tau_j)\Delta_k W(t_{j-1})
    \Big\|_{L^p(\Omega;H)}=:I_1+I_2,
  \end{align*}
  where $t^\tau_j:=t_{j-1}+\tau_j k$.

  For the estimate of $I_1$, we first apply the triangle inequality and
  \eqref{eq:S_k,hL}. Then, applying the linear growth \eqref{eq3:linear_f} of
  $f$ and the boundedness of $S_{k,h}$ in Lemma \ref{lem:estimate_Sdiscrete}
  yields
  \begin{align*}
    I_1
    &\leq \sum_{j=m}^{n} k \Big\|
    S^{n-j+1}_{k,h} f\big( t^\tau_j, \Psi_{k,h}^j(0,\tau_j)\big)
    \Big\|_{L^p(\Omega;H)}\\
    &\leq k \sum_{j=m}^{n} \big\| 
    f\big(t^\tau_j, \Psi_{k,h}^j(0,\tau_j)\big) \big\|_{L^p(\Omega;H)} \\
    &\leq \hat{C}_f k \sum_{j=m}^{n} \big( 1 + 
    \big\| \Psi_{k,h}^j(0,\tau_j) \big\|_{L^p(\Omega;H)} \big)\\
    &\leq \hat{C}_f \big(1 + \sup_{j \in \{m, \ldots,n\}}
    \|\Psi_{k,h}^j(0,\tau_j)\big\|_{L^p(\Omega;H)} \big)
    (t_n-t_{m-1}).
  \end{align*}
  Thus, for the estimate of $I_1$ it remains to show that
  $\|\Psi_{k,h}^j(0,\tau_j)\big\|_{L^p(\Omega;H)}$
  can be bounded uniformly. Indeed, by definition of $\Psi_{k,h}$
  in \eqref{eq:Psi}, the 
  linear growth of $f$ in \eqref{eq3:linear_f}, Assumption~\ref{as:g},
  Proposition~\ref{prop:BDG}, and estimate \eqref{eq:S_k,hL}
  we have for each $j$  
  \begin{align*}
    &\|\Psi_{k,h}^j(0,\tau_j)\big\|_{L^p(\Omega;H)}\\
    &\quad \leq \|\tau_j k S_{\tau_j k, h}f(t_{j-1},0)\|_{L^p(\Omega;H)}+\|
    S_{\tau_j k, h}g(t_{j-1})\Delta_{\tau_j k} W(t_{j-1})\|_{L^p(\Omega;H)}\\ 
    &\quad \leq \big( \hat{C}_f T + C_p C_g T^{\frac{1}{2}} \big),
  \end{align*}
  which is independent of $j$, $h$, and $k$. 

  For the estimate of $I_2$ we first define a new process
  $\hat{g} \colon [0,T] \times \Omega_\tau \to \mathcal{L}_2^0$ by
  \begin{equation}\label{eq:ghat}
    \hat{g}(t):=g(t_{j-1} + \tau_j k),\ \ \mbox{for\ }t\in [t_{j-1},t_j).
  \end{equation}
  Then, we rewrite the sum as a stochastic integral by inserting
  \eqref{eq:discOpt} and replacing $g$ by $\hat{g}$. 
  An application of Proposition~\ref{prop:BDG}, estimate \eqref{eq:S_k,h}, and
  Assumption~\ref{as:g} yields 
  \begin{align}
    \label{eq:bistability_Gamma}
    \begin{split}
      I_2 
      &= \Big\| \int_{t_{m-1}}^{t_n} \overline{S}_{k,h}(t_n-r) \hat{g}(r)
      \diff{W(r)} \Big\|_{L^p(\Omega;H)} \\  
      &=\Big(\E_{\tau} \Big[
      \E_{W}\Big[\Big\| \int_{t_{m-1}}^{t_n}
      \overline{S}_{k,h}(t_n-r) \hat{g}(r) \diff{W(r)} 
      \Big\|^p\Big] \Big]\Big)^{\frac{1}{p}}\\
      &\leq C_p \Big(\E_{\tau}\Big[ \Big(\int_{t_{m-1}}^{t_n}
      \| \overline{S}_{k,h}(t_n-r) \hat{g}(r) 
      \|^2_{\mathcal{L}_2^0} \diff{r}\Big)^{\frac{p}{2}}
      \Big]\Big)^{\frac{1}{p}}\\
      &\leq C_p \Big( \int_{t_{m-1}}^{t_n} \sup_{s \in [0,T]} 
      \|g(s)\|^2_{\mathcal{L}_2^0} \diff{r}
      \Big)^{\frac{1}{2}}
      \leq C_p C_g (t_n-t_{m-1})^{\frac{1}{2}}.
    \end{split}
  \end{align}
  Therefore, we obtain \eqref{eq:cond_Phi0} with
  \begin{equation}
    \label{eq:C_Phi}
    C_{\Phi,0} :=
    \hat{C}_f T^{\frac{1}{2}} 
    \big(1 + \hat{C}_f T + C_p C_g T^{\frac{1}{2}} \big)
    + C_p C_g.
  \end{equation}
  
  It remains to verify \eqref{eq:cond_PhiLip}. For this let $Y_k, Z_k \in
  \mathcal{G}^p_k$ be arbitrary. Then, by inserting the
  definition of $\Phi_{k,h}$ from \eqref{eq:Phi} and an application of
  Assumption \ref{as:f} we get
  \begin{align*}
    & \Big\| \sum_{j = 1}^n S_{k,h}^{n-j}
    \big( \Phi_{k,h}^j(Y_k^{j-1}, \tau_j) 
    - \Phi_{k,h}^j( Z_k^{j-1}, \tau_j) \big) \Big\|_{L^p(\Omega;H)}\\
    &\quad = k \Big\| \sum_{j = 1}^n S_{k,h}^{n-j+1}
    \big( f \big(t^\tau_j,\Psi_{k,h}^j(Y_k^{j-1}, \tau_j) \big)
    - f\big(t^\tau_j, \Psi_{k,h}^j(Z_k^{j-1}, \tau_j) \big)\big) 
    \Big\|_{L^p(\Omega;H)}\\
    &\quad \leq  k C_f \sum_{j = 1}^n \big\| 
    \Psi_{k,h}^j(Y_k^{j-1}, \tau_j) - \Psi_{k,h}^j(Z_k^{j-1}, \tau_j) 
    \big\|_{L^p(\Omega;H)}.
  \end{align*}
  In addition, from the same arguments we also deduce the bound
  \begin{align*}
    & \|\Psi_{k,h}^j(Y_k^{j-1}, \tau_j)
    - \Psi_{k,h}^j(Z_k^{j-1}, \tau_j) \|_{L^p(\Omega;H)}\\
    &\quad  \leq \|S_{\tau k,h} (Y_k^{j-1}-Z_k^{j-1})
    + \tau_j k S_{\tau k,h} ( f\big(t_{j-1},Y_k^{j-1}\big) 
    - f\big(t_{j-1},Z_k^{j-1})\big) \|_{L^p(\Omega;H)}\\
    &\quad \leq (1+kC_f)\|Y_k^{j-1}-Z_k^{j-1}\|_{L^p(\Omega;H)}.
  \end{align*}
  Altogether, this proves \eqref{eq:cond_PhiLip} with
  \begin{align*}
    C_{\Phi,1} :=  C_f (1 + T C_f).   
  \end{align*}
  This completes the proof of the lemma.
\end{proof}

Next, we observe that \eqref{eq:cond_ini}, Lemma~\ref{lem:estimate_Sdiscrete}, and
Lemma~\ref{lem:stab} verify together all conditions of the stability theorem 
\cite[Theorem~3.8]{kruse2014b}. Therefore, we immediately obtain
the main result of this section:

\begin{theorem}
  \label{th:stab}
  Let Assumptions~\ref{as:A} to \ref{as:g} be satisfied with $p \in [2,\infty)$.
  Then, the randomized Galerkin finite element method \eqref{eq:scheme} is
  bistable with respect to the norms $\| \cdot \|_{\infty,p}$ and $\| \cdot
  \|_{S,p,h}$. 
\end{theorem}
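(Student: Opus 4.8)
The plan is to recognize that all the analytical work has been front-loaded into the preceding lemmas, so the proof reduces to checking that the numerical scheme, rewritten as the abstract one-step method \eqref{eq:onestep}, satisfies the hypotheses of the general stability theorem \cite[Theorem~3.8]{kruse2014b} and then quoting that theorem verbatim. Concretely, I would first recall that \eqref{eq:scheme} has already been cast in the form \eqref{eq:onestep} with linear part $S_{k,h}$ and increment functions $\Phi_{k,h}^n$ from \eqref{eq:Phi}, and that the associated residual operators $\mathcal{R}_{k,h}$ are defined in \eqref{eq:residual}. The abstract theorem requires three inputs: a uniform bound on the initial data, uniform power-boundedness of the linear propagator in $\mathcal{L}(H)$, and suitable boundedness and Lipschitz estimates on the increment functions measured through the convolution structure underlying the Spijker norm $\|\cdot\|_{S,p,h}$.

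Then I would match each hypothesis to a result already established: the initial-value bound is supplied by \eqref{eq:cond_ini}, which follows from $\|P_h\|_{\mathcal{L}(H)} = 1$; the uniform contractivity $\sup_{k,h}\|S_{k,h}\|_{\mathcal{L}(H)} \le 1$ comes from \eqref{eq:S_k,hL} in Lemma~\ref{lem:estimate_Sdiscrete}; and the two increment estimates \eqref{eq:cond_Phi0} (boundedness at $0$, with the crucial $(t_n - t_{m-1})^{1/2}$ scaling) and \eqref{eq:cond_PhiLip} (the discrete Lipschitz bound carrying the factor $k$) are exactly the content of Lemma~\ref{lem:stab}. Once all three are verified, \cite[Theorem~3.8]{kruse2014b} yields both that $\mathcal{R}_{k,h}$ is a well-defined bijection on $\mathcal{G}_k^p$ and the two-sided estimate \eqref{eq:bistab}, which is precisely the definition of bistability.

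For transparency I would also indicate the mechanism that makes the cited theorem work, since this clarifies why the Spijker norm is the right object. Inverting the residual relation \eqref{eq:residual} by discrete variation of constants gives $Z_k^n = S_{k,h}^n Z_k^0 + \sum_{j=1}^n S_{k,h}^{n-j}\big(\Phi_{k,h}^j(Z_k^{j-1},\tau_j) + \mathcal{R}_{k,h}[Z_k](t_j)\big)$, so the convolution $\sum_{j=1}^n S_{k,h}^{n-j}\,(\cdot)$ appearing in $\|\cdot\|_{S,p,h}$ is exactly what decouples the residual from the linear dynamics. Subtracting the corresponding formulas for $Y_k$ and $Z_k$ and invoking the Lipschitz bound \eqref{eq:cond_PhiLip} reduces the upper estimate in \eqref{eq:bistab} to a discrete Gronwall inequality, while the lower estimate follows from the triangle inequality after applying $\mathcal{R}_{k,h}$.

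I expect that for the theorem exactly as stated there is essentially no obstacle left: the genuinely delicate estimates have been pushed into Lemma~\ref{lem:stab}, whose hard point is controlling the stochastic term $I_2$ uniformly in the random nodes $\tau_j$ by rewriting the sum as a single stochastic integral against $\hat g$ and applying Proposition~\ref{prop:BDG}. Granting those lemmas, the present statement is a direct citation, and the only remaining care is to confirm that the integrability exponent $p \in [2,\infty)$ together with the measurability and $(\F^{\pi_k}_n)_n$-adaptedness of $(X_{k,h}^n)_n$ noted above match the standing hypotheses of \cite[Theorem~3.8]{kruse2014b}.
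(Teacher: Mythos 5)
Your proposal is correct and follows exactly the paper's own route: the proof there consists precisely of observing that \eqref{eq:cond_ini}, Lemma~\ref{lem:estimate_Sdiscrete}, and Lemma~\ref{lem:stab} verify all hypotheses of the stability theorem \cite[Theorem~3.8]{kruse2014b}, from which bistability follows immediately. Your additional sketch of the discrete variation-of-constants and Gronwall mechanism behind the cited theorem is accurate but not part of the paper's argument.
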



\section{Consistency and convergence}
\label{sec:consistency}

In the previous section it was proven that the randomized Galerkin finite
element method \eqref{eq:scheme} is bistable. In this section we complete the
error analysis by first deriving estimates for the local truncation error
of the mild solution to the
stochastic evolution equation \eqref{eq:SPDE}. Together with the stability
inequality (\ref{eq:bistab}) these estimates then also yield estimates 
for the global discretization error with respect to the norm in 
$L^p(\Omega;H)$.

Let $X \colon [0,T] \times \Omega \to H$ denote the mild solution 
\eqref{eq:mild} to the stochastic evolution equation \eqref{eq:SPDE}. For an
arbitrary step size $k \in (0,T)$, we transform the stochastic process $X$ into
a grid function by restricting it to the grid points in $\pi_k$. More formally,
we obtain $X|_{\pi_k} \colon \pi_k \to L^p(\Omega;H)$ by defining
\begin{align*}
  X|_{\pi_k}(t_n) = X(t_n)  
\end{align*}
for all $n \in \{0,1,\ldots,N_k\}$.
From \eqref{eq:mild_max} it follows that indeed 
$X|_{\pi_k} \in \mathcal{G}^p_k$ for each $k \in (0,T)$. Hence, we can apply
the residual operator $\mathcal{R}_{k,h}$ from \eqref{eq:residual} to
$X|_{\pi_k}$. The \emph{local truncation error} is then given by
\begin{align*}
  \| \mathcal{R}_{k,h}[ X|_{\pi_k} ] \|_{S,p,h}.
\end{align*}
In order to derive an estimate of the local truncation error we first 
recall the definition of the stochastic Spijker norm from \eqref{eq:Spijker}.
Then we insert the variation-of-constants formula \eqref{eq:mild} and the definition of
the residual operator \eqref{eq:residual}. After some elementary 
rearrangements we arrive at the inequality
\begin{align}
  \label{eq:splitres}
  \begin{split}
    &\big\| \mathcal{R}_{k,h}[ X|_{\pi_k} ] \big\|_{S,p,h} \\
    &\quad \le \big\| X_0 - \xi_h \big\|_{L^p(\Omega_W;H)}
    + \max_{n \in \{1,\ldots,N_k\}} \big\| ( S(t_n) - S_{k,h}^n) X_0
    \big\|_{L^p(\Omega_W;H)}\\
    &\qquad + \max_{n \in \{1,\ldots,N_k\}}
    \Big\| \sum_{j = 1}^n \int_{t_{j-1}}^{t_j} 
    \big( S(t_n -s) - S_{k,h}^{n-j+1} \big) f(s,X(s)) \diff{s}
    \Big\|_{L^p(\Omega_W;H)} \\
    &\qquad + \max_{n \in \{1,\ldots,N_k\}}
    \Big\| \sum_{j = 1}^n \int_{t_{j-1}}^{t_j} 
    \big( S(t_n -s) - S_{k,h}^{n-j+1} \big) g(s) \diff{W(s)}
    \Big\|_{L^p(\Omega_W;H)} \\
    &\qquad + \max_{n \in \{1,\ldots,N_k\}}
    \Big\| \sum_{j = 1}^n S_{k,h}^{n-j} 
    \Big( - \int_{t_{j-1}}^{t_j} S_{k,h} f(s,X(s)) \diff{s}\\
    &\quad \qquad \qquad \qquad \qquad 
    + \int_{t_{j-1}}^{t_j} S_{k,h} g(s) \diff{W(s)} 
    - \Phi_{k,h}^j( X(t_{j-1}), \tau_j) \Big) \Big\|_{L^p(\Omega;H)},
  \end{split}
\end{align}
where the linear operators $S_{k,h} \in \mathcal{L}(H)$ and the associated 
increment functions $\Phi_{k,h}$, $k \in (0,T)$, $h \in (0,1)$,
are defined in \eqref{eq:discOp} and \eqref{eq:Phi}, respectively. For a more
detailed proof of \eqref{eq:splitres} we refer to
\cite[Lemma~3.11]{kruse2014b}.

The following sequence of lemmas contains some bounds for the terms on the
right hand side of \eqref{eq:splitres}. First, we are concerned with the
consistency of the initial value of the numerical scheme.

\begin{lemma} 
  \label{lem:cons_ini}
  Let Assumption~\ref{as:ini} and Assumption \ref{as:Ritz} be
  satisfied with $p \in [2,\infty)$ and $r \in [0,1]$. Then there exist $C \in
  (0,\infty)$ such that
  \begin{align*}
    \|X_0 - \xi_h \|_{L^p(\Omega_W;H)}
    \leq C h^{r+1} \| X_0 \|_{L^p(\Omega_W;\dot{H}^{1+r})}
    \quad \text{ for all }
    h \in (0,1).
  \end{align*}
\end{lemma}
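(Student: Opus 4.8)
The plan is to reduce the stochastic estimate to a purely deterministic approximation bound and then integrate over $\Omega_W$. Since $\xi_h = P_h X_0$ and $P_h$ is a deterministic bounded linear operator, the left-hand side equals $\big( \E_W[ \|X_0 - P_h X_0\|^p ] \big)^{1/p}$; thus it suffices to establish the pointwise deterministic inequality $\|x - P_h x\| \le C h^{1+r} \|x\|_{1+r}$ for every $x \in \dot{H}^{1+r}$, apply it with $x = X_0(\omega)$, and then take the $L^p(\Omega_W)$-norm.

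The first and conceptually central observation is that Assumption~\ref{as:Ritz} is stated for the Ritz projector $R_h$, whereas $\xi_h$ involves the $H$-orthogonal projector $P_h$. These are bridged by the variational characterisation of $P_h$: because $P_h x$ is the best approximation to $x$ from $V_h$ in the norm of $H$, we have $\|x - P_h x\| = \inf_{y_h \in V_h} \|x - y_h\| \le \|x - R_h x\|$ for every $x \in \dot{H}^1$. Combining this with Assumption~\ref{as:Ritz} immediately yields the integer-order bounds $\|x - P_h x\| \le C h^s \|x\|_s$ for $s \in \{1,2\}$, that is, the claimed estimate in the endpoint cases $r = 0$ and $r = 1$.

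It remains to fill in the intermediate exponents $r \in (0,1)$, which is where the only non-routine step enters. I would regard $\id - P_h$ as a linear operator mapping $\dot{H}^1 \to H$ with norm at most $C h$ and $\dot{H}^2 \to H$ with norm at most $C h^2$. Since $A$ is self-adjoint and positive definite by Assumption~\ref{as:A}, the scale $(\dot{H}^s)_{s \ge 0}$ of fractional-power domains forms an interpolation scale, with $\dot{H}^{1+r}$ identified as the interpolation space between $\dot{H}^1$ and $\dot{H}^2$ at parameter $r$. An operator-interpolation argument then bounds the norm of $\id - P_h \colon \dot{H}^{1+r} \to H$ by $(C h)^{1-r} (C h^2)^{r} = C h^{1+r}$, giving the desired deterministic estimate for all $r \in [0,1]$.

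Finally, applying this inequality to $x = X_0(\omega)$, raising to the $p$-th power, integrating over $\Omega_W$, and taking the $p$-th root produces exactly $\|X_0 - \xi_h\|_{L^p(\Omega_W;H)} \le C h^{1+r} \|X_0\|_{L^p(\Omega_W;\dot{H}^{1+r})}$, since the constant from the interpolation step is independent of $\omega$ and $h$. I expect the interpolation step to be the main obstacle: one must justify that the fractional-power spaces of $-A$ coincide with the (real or complex) interpolation spaces between $\dot{H}^1$ and $\dot{H}^2$, so that the two integer-order operator bounds may legitimately be interpolated. This is standard for positive self-adjoint operators, but it is the only point requiring care beyond the elementary best-approximation estimate for $P_h$.
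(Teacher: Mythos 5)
Your proposal is correct and follows essentially the same route as the paper: pass from $P_h$ to $R_h$ via the best-approximation property of the orthogonal projector, invoke Assumption~\ref{as:Ritz} for the integer cases $s\in\{1,2\}$, obtain the intermediate exponents by operator interpolation on the scale $\dot{H}^s$ (exactly as the paper indicates in the remark following Assumption~\ref{as:Ritz}), and then take the $L^p(\Omega_W)$-norm. The only cosmetic difference is that you interpolate $\mathrm{Id}-P_h$ after the best-approximation step, whereas the paper interpolates the Ritz error $\mathrm{Id}-R_h$ first; both orderings are equally valid.
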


\begin{proof}
  After inserting $\xi_h = P_h X_0$ we obtain
  \begin{align*}
    \|X_0-\xi_h\|_{L^p(\Omega_W;H)}
    &= \| (\mathrm{Id} - P_h) X_0 \|_{L^p(\Omega_W;H)}
    \leq \|(\mathrm{Id} - R_h) X_0 \|_{L^p(\Omega_W;H)}\\
    &\leq C h^{r+1} \| X_0 \|_{L^p(\Omega_W;\dot{H}^{1+r})},
  \end{align*}
  where the first inequality follows
  from the best approximation property of the orthogonal projector $P_h \colon
  H \to V_h$, while the last line is due to 
  Assumption~\ref{as:ini} and Assumption~\ref{as:Ritz}. 
\end{proof}

Next, we collect some well-known error estimates for the approximation of the
semigroup $(S(t))_{t \in [0,T]} \subset \mathcal{L}(H)$. Recall the definition
of $\overline{S}_{k,h}$ from \eqref{eq:discOpt}.
For a proof of the first two error bounds in Lemma~\ref{lm:S{k,h}(t)}
we refer to \cite[Chapter~7]{thomee2006}. A proof for \eqref{eq:F_k,hInt} and
\eqref{eq:F_k,h(t)2} is found in \cite[Lemma~3.13]{kruse2014}.

\begin{lemma}
  \label{lm:S{k,h}(t)}
  Let Assumptions~\ref{as:A} and \ref{as:Ritz} be satisfied.
  Then, for every $\rho \in [0,2]$ there exists $C \in (0,\infty)$
  such that for all $k \in (0,T)$, $h \in (0,1)$, $t\in (0,T]$
  we have
  \begin{equation}
    \label{eq:F_k,h(t)1r}
    \big\| \big( S(t) - \overline{S}_{k,h}(t) \big) x \big\|
    \leq C \big( h^{\rho}+k^{\frac{\rho}{2}} \big) \|x\|_\rho
    \quad \text{ for all } x \in \dot{H}^\rho,
  \end{equation}
  and
  \begin{equation}
    \label{eq:F_k,h(t)1}
    \big\| \big( S(t) - \overline{S}_{k,h}(t) \big) x \big\| 
    \leq C \big( h^{\rho} + k^{\frac{\rho}{2}} \big)
    t^{-\frac{\rho}{2}} \|x\| \quad \text{ for all } x \in H.
  \end{equation}
  In addition, there exists $C \in (0,\infty)$ such that for all 
  $t \in [0,T]$, $h \in (0,1)$, $k \in (0,T)$, and $x \in H$ we have
  \begin{align}
    \label{eq:F_k,hInt}
    \Big\| \int_0^t \big( S(s) - \overline{S}_{k,h}(s) \big) x
    \diff{s} \Big\| \le C ( h^{2} + k ) \|x \|.
  \end{align}
  Moreover, for every $r \in [0,1]$ there exists $C \in (0,\infty)$ with
  \begin{equation}
    \label{eq:F_k,h(t)2}
    \Big( \int_{0}^t \big\| \big( S(s) - \overline{S}_{k,h}(s) \big) 
    x \big\|^2 \diff{s} \Big)^{\frac{1}{2}}
    \leq C \big(h^{1 + r}+ k^{\frac{1+r}{2}} \big) \|x\|_{r}
  \end{equation}
  for all $k \in (0,T)$, $h \in (0,1)$, $x \in \dot{H}^r$, and $t \in (0,T]$.
\end{lemma}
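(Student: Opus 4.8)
The plan is to control the error operator $F_{k,h}(t) := S(t) - \overline{S}_{k,h}(t)$ by splitting it into a spatial and a temporal contribution. Introducing the purely spatial semidiscrete solution operator $S_h(t) := \ee^{-t A_h} P_h$, I would write
\begin{align*}
  S(t) - \overline{S}_{k,h}(t) = \big( S(t) - S_h(t) \big) + \big( S_h(t) - \overline{S}_{k,h}(t) \big),
\end{align*}
so that the first summand is the Galerkin semidiscretization error and the second is the backward Euler time-stepping error for the finite-dimensional problem $\dot{u}_h + A_h u_h = 0$. Both pieces are classical. For the spatial error I would use the Ritz projector $R_h$ from Assumption~\ref{as:Ritz} together with the self-adjointness and analyticity of $A$ guaranteed by Assumption~\ref{as:A}; these yield smooth-data bounds of order $h^\rho \|x\|_\rho$ and, exploiting the analytic smoothing of $S(t)$, the nonsmooth-data counterparts carrying the singular weight $t^{-\frac{\rho}{2}}$. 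For the temporal error I would use that backward Euler corresponds to the rational approximation $r(z) = (1+z)^{-1}$ of $\ee^{-z}$, which satisfies $|r(z)| \le 1$ on the spectrum of the positive definite $A_h$ and is first-order accurate; the spectral representation of the self-adjoint $A_h$ then provides the bounds $k^{\frac{\rho}{2}}\|x\|_\rho$ and $k^{\frac{\rho}{2}} t^{-\frac{\rho}{2}}\|x\|$. This establishes \eqref{eq:F_k,h(t)1r} and \eqref{eq:F_k,h(t)1} at the endpoints $\rho \in \{0,2\}$, with the intermediate range $\rho \in (0,2)$ recovered by interpolation.

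For the integrated bound \eqref{eq:F_k,hInt} the decisive observation is that integration in time acts like an application of $A^{-1}$, and therefore raises the effective regularity of the data by two orders. Concretely, I would use the identity $\int_0^t S(s) x \diff{s} = A^{-1}(\mathrm{Id} - S(t)) x$ together with the analogous telescoping formula $\int_0^{t_j} \overline{S}_{k,h}(s) x \diff{s} = A_h^{-1}\big( \mathrm{Id} - S_{k,h}^j \big) P_h x$ for the piecewise-constant interpolation, and then apply the pointwise estimates already obtained to the smoothed data $A^{-1}x \in \dot{H}^2$. This is exactly the mechanism producing the full second order $h^2 + k$ for arbitrary $x \in H$.

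The square-integrated estimate \eqref{eq:F_k,h(t)2}, which is the one actually needed to bound the stochastic convolution through Proposition~\ref{prop:BDG}, I would prove by splitting the interval $[0,t]$ and balancing the two pointwise estimates \eqref{eq:F_k,h(t)1r} and \eqref{eq:F_k,h(t)1}. For $x \in \dot{H}^r$ with $r \in [0,1]$ the smooth-data bound contributes $(h^{1+r} + k^{\frac{1+r}{2}})\|x\|_r$ after integration, whereas near $s = 0$ the singular factor $s^{-\frac{\rho}{2}}$ from the nonsmooth regime appears and must be kept under control. I expect this balancing near the temporal singularity to be the main obstacle: one has to choose $\rho$ so that the square $s^{-\rho}$ is still integrable, that is $\rho < 1$, while simultaneously matching the resulting powers of $h$ and $k$ with the prescribed rate $1+r$. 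The obstacle is resolved by interpolating between the two regimes and using $\int_0^t s^{-\rho}\diff{s} < \infty$ for $\rho < 1$. All four bounds are standard refinements of parabolic finite element theory; detailed versions are given in \cite[Chapter~7]{thomee2006} and \cite[Lemma~3.13]{kruse2014}, on which I would model the argument.
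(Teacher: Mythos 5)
The paper itself gives no proof of this lemma; it delegates \eqref{eq:F_k,h(t)1r} and \eqref{eq:F_k,h(t)1} to \cite[Chapter~7]{thomee2006} and \eqref{eq:F_k,hInt}, \eqref{eq:F_k,h(t)2} to \cite[Lemma~3.13]{kruse2014}, so your proposal has to be measured against the arguments in those references. For the first three estimates your sketch is essentially the standard route taken there and is sound: the splitting into the semidiscrete spatial error $S(t)-S_h(t)$ and the backward Euler error $S_h(t)-\overline{S}_{k,h}(t)$, the Ritz-projection and analytic-smoothing bounds at the endpoints $\rho\in\{0,2\}$ with interpolation in between, and for \eqref{eq:F_k,hInt} the identities $\int_0^t S(s)x\diff{s}=A^{-1}(\mathrm{Id}-S(t))x$ and $\int_0^{t_n}\overline{S}_{k,h}(s)x\diff{s}=A_h^{-1}(\mathrm{Id}-S_{k,h}^n)P_hx$ together with $A_h^{-1}P_h=R_hA^{-1}$, which is the step that actually converts the time integration into two extra orders of spatial regularity. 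Two small points you should make explicit: $\overline{S}_{k,h}$ is a piecewise constant interpolant, so at off-grid times an additional term $S_h(t)-S_h(t_j)$ (controlled by the H\"older continuity of the analytic semigroup, costing exactly $k^{\rho/2}$) must be absorbed; and the interpolation in $\rho$ should be justified by the spectral calculus of the self-adjoint $A$ and $A_h$ rather than asserted.

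For \eqref{eq:F_k,h(t)2}, however, the mechanism you describe does not close, and this is a genuine gap. Interpolating between the smooth-data bound and the nonsmooth-data bound so as to retain the full prefactor $h^{1+r}+k^{\frac{1+r}{2}}$ for data in $\dot{H}^r$ forces the temporal singularity to be exactly $s^{-\frac12}$; after squaring this is $s^{-1}$, which is \emph{not} integrable near $0$, and it is the borderline case, not one you can step below. Choosing a smaller singularity exponent $\rho<1$ as you propose necessarily trades away part of the convergence rate, and the alternative of splitting $[0,t]$ at $s^\ast=h^2+k$ and using the trivial bound $\|F_{k,h}(s)x\|\le 2\|x\|$ near the origin produces an unavoidable factor $|\log(h^2+k)|^{\frac12}$. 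The estimate \eqref{eq:F_k,h(t)2} without a logarithm is obtained in \cite[Lemma~3.13]{kruse2014} (building on \cite[Theorem~3.14]{thomee2006}) by an energy, i.e.\ square-function, argument: one uses $\int_0^\infty\|A^{\frac12}S(s)x\|^2\diff{s}=\tfrac12\|x\|^2$ and its semidiscrete and fully discrete analogues to prove the endpoint cases $r=0$ and $r=1$ directly (the $L^2$-in-time norm gains one order of regularity over the pointwise-in-time norm), and only then interpolates in $r$. You should replace the "balance the two pointwise regimes" step by this energy argument; without it the claimed rate is not reachable.
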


By Lemma~\ref{lm:S{k,h}(t)} we can directly estimate several of the terms on the right hand side of \eqref{eq:splitres}. We begin with the
error with respect to the initial condition.

\begin{lemma}
  Let Assumption~\ref{as:ini} be satisfied with $p \in [2,\infty)$ and $r \in
  [0,1]$. Then, it holds true that
  \begin{align*}
    \max_{n \in \{1,\ldots,N_k\}} \big\| ( S(t_n) - S_{k,h}^n) X_0
    \big\|_{L^p(\Omega_W;H)}
    \leq C (h^{1+r}+k^{\frac{1+r}{2}}) \|X_0\|_{L^p(\Omega_W;\dot{H}^{1+r})}
  \end{align*}
  for all $h\in (0,1)$ and $k\in (0,T)$.
\end{lemma}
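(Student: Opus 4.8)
The plan is to derive the claim from the continuous-time semigroup approximation bound \eqref{eq:F_k,h(t)1r} of Lemma~\ref{lm:S{k,h}(t)} with $\rho = 1+r \in [1,2]$, applied pathwise to the random initial value $X_0$. The first step is the algebraic observation that the discrete powers $S_{k,h}^n$ coincide with the piecewise constant interpolation $\overline{S}_{k,h}$ on the interval $[t_{n-1},t_n)$. Indeed, since $P_h$ acts as the identity on $V_h$ while $(\mathrm{Id}+kA_h)^{-1}$ maps $V_h$ into itself, a straightforward induction starting from \eqref{eq:discOp} gives $S_{k,h}^n = (\mathrm{Id}+kA_h)^{-n}P_h$, which by definition \eqref{eq:discOpt} equals $\overline{S}_{k,h}(t)$ for every $t \in [t_{n-1},t_n)$.

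Next I would fix a deterministic $x \in \dot{H}^{1+r}$ together with an arbitrary $t \in [t_{n-1},t_n)$ and split, via the triangle inequality,
\[
  \big\| (S(t_n) - S_{k,h}^n) x \big\|
  \le \big\| (S(t_n) - S(t)) x \big\|
  + \big\| (S(t) - \overline{S}_{k,h}(t)) x \big\|.
\]
The second summand is controlled by $C(h^{1+r}+k^{\frac{1+r}{2}})\|x\|_{1+r}$ through \eqref{eq:F_k,h(t)1r}, uniformly in $t$, $n$, $h$, and $k$. For the first summand one may either keep $t=t_{n-1}$ and invoke the standard Hölder estimate $\|(S(t_n)-S(t_{n-1}))x\| \le C k^{\frac{1+r}{2}}\|x\|_{1+r}$ for the analytic semigroup, or let $t \uparrow t_n$ and use the strong continuity of $(S(t))_{t \ge 0}$ so that $\|(S(t_n)-S(t))x\| \to 0$. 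Since the bound on the second summand does not depend on $t$, either route yields the deterministic estimate $\|(S(t_n) - S_{k,h}^n)x\| \le C(h^{1+r}+k^{\frac{1+r}{2}})\|x\|_{1+r}$.

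It then remains to lift this to the $L^p(\Omega_W;H)$-norm. As the operators $S(t_n)$ and $S_{k,h}^n$ are deterministic, I would apply the previous estimate pathwise with $x = X_0(\omega)$ for $\P_W$-almost every $\omega$, raise to the $p$-th power, integrate over $\Omega_W$, and take $p$-th roots, obtaining
\[
  \big\| (S(t_n) - S_{k,h}^n) X_0 \big\|_{L^p(\Omega_W;H)}
  \le C \big( h^{1+r} + k^{\frac{1+r}{2}} \big)
  \| X_0 \|_{L^p(\Omega_W;\dot{H}^{1+r})}.
\]
Because the constant $C$ is independent of $n$, passing to the maximum over $n \in \{1,\ldots,N_k\}$ leaves the bound unchanged.

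The only genuinely delicate point is the bookkeeping in the first step: matching the discrete power $S_{k,h}^n$ with the correct subinterval of the right-continuous interpolation $\overline{S}_{k,h}$, and reconciling the time argument $t_n$ of the exact semigroup with the argument $t < t_n$ appearing in $\overline{S}_{k,h}$. Once the limit (or Hölder) argument absorbs this mismatch at no additional order in $k$, the remainder is a direct application of Lemma~\ref{lm:S{k,h}(t)} together with Tonelli's theorem.
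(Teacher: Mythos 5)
Your argument is correct and follows essentially the same route as the paper, which simply invokes ``the corresponding discrete-time version of \eqref{eq:F_k,h(t)1r}'' applied pathwise to $X_0$ and then integrates using Assumption~\ref{as:ini}. The only difference is that you explicitly supply the bridging step from the interpolated operator $\overline{S}_{k,h}$ to the discrete power $S_{k,h}^n$ (via the identification on $[t_{n-1},t_n)$ and the limit $t\uparrow t_n$, or the H\"older bound for $S(t_n)-S(t_{n-1})$), a detail the paper leaves implicit by citing the discrete estimates of Thom\'ee directly.
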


\begin{proof}
  The assertion follows directly from 
  Assumption \ref{as:ini} and the corresponding discrete-time version of
  \eqref{eq:F_k,h(t)1r}.
\end{proof}

\begin{lemma}
  \label{lem:conv_det}
  Let Assumption \ref{as:A} to Assumption \ref{as:g} be fulfilled with $p \in
  [2,\infty)$ and $\gamma \in (0,\frac{1}{2}]$. Then there exists $C \in
  (0,\infty)$ such that for all $h \in (0,1)$ and $k \in (0,T)$
  we have
  \begin{align*}
    &\max_{n \in \{1, \ldots, N_k\}} \Big\| \sum_{j = 1}^n \int_{t_{j-1}}^{t_j} 
    \big( S(t_n -s) - S_{k,h}^{n-j+1} \big) f(s,X(s)) \diff{s}
    \Big\|_{L^p(\Omega_W;H)} \\
    &\qquad \le C \big(1 +  
    \| X \|_{C^{\frac{1}{2}}([0,T];L^p(\Omega_W;H))} \big) ( h^{2} + k ).
  \end{align*} 
\end{lemma}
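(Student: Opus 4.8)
The plan is to collapse the sum of partition integrals into a single Bochner integral against the piecewise constant operator $\overline{S}_{k,h}$. First I would observe that for a.e.\ $s \in (t_{j-1},t_j)$ one has $t_n - s \in (t_{n-j},t_{n-j+1})$, so that by the definition \eqref{eq:discOpt} of the interpolant, together with the identity $S_{k,h}^m = (\mathrm{Id}+kA_h)^{-m}P_h$ (which follows from \eqref{eq:discOp} and the fact that $P_h$ acts as the identity on $V_h$), the constant operator $S_{k,h}^{n-j+1}$ coincides with $\overline{S}_{k,h}(t_n-s)$. Summing over $j$ then yields
\begin{align*}
  \sum_{j=1}^n \int_{t_{j-1}}^{t_j} \big(S(t_n-s) - S_{k,h}^{n-j+1}\big) f(s,X(s))\diff{s}
  = \int_0^{t_n} \big(S(t_n-s) - \overline{S}_{k,h}(t_n-s)\big) f(s,X(s)) \diff{s}.
\end{align*}

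Next I would freeze the integrand at the terminal time by writing $f(s,X(s)) = \big(f(s,X(s)) - f(t_n,X(t_n))\big) + f(t_n,X(t_n))$, splitting the integral accordingly. For the constant part, substituting $\sigma = t_n - s$ turns the factor into the deterministic operator $\int_0^{t_n}\big(S(\sigma)-\overline{S}_{k,h}(\sigma)\big)\diff{\sigma}$, whose action on any fixed vector is bounded by $C(h^2+k)$ via the integrated estimate \eqref{eq:F_k,hInt}; applying this pathwise and passing to the $L^p(\Omega_W;H)$-norm gives $C(h^2+k)\|f(t_n,X(t_n))\|_{L^p(\Omega_W;H)}$, which by the linear growth \eqref{eq3:linear_f} is at most $C(h^2+k)\big(1 + \|X\|_{C([0,T];L^p(\Omega_W;H))}\big)$, and the sup norm is dominated by the $C^{\frac{1}{2}}$-norm.

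For the variable part I would use the full-order but singular pointwise estimate \eqref{eq:F_k,h(t)1} with $\rho = 2$, namely $\|S(t_n-s)-\overline{S}_{k,h}(t_n-s)\|_{\mathcal{L}(H)} \le C(h^2+k)(t_n-s)^{-1}$, combined with the regularity of the increment $f(s,X(s)) - f(t_n,X(t_n))$. Using the spatial Lipschitz and temporal H\"older bounds of Assumption~\ref{as:f} together with $X \in C^{\frac{1}{2}}([0,T];L^p(\Omega_W;H))$, and noting $\gamma \le \tfrac12$ and $t_n-s \le T$, one gets $\|f(s,X(s)) - f(t_n,X(t_n))\|_{L^p(\Omega_W;H)} \le C\big(1+\|X\|_{C^{\frac{1}{2}}([0,T];L^p(\Omega_W;H))}\big)(t_n-s)^{\gamma}$. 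The integrand is therefore controlled by $C(h^2+k)\big(1+\|X\|_{C^{\frac{1}{2}}}\big)(t_n-s)^{\gamma-1}$, and since $\int_0^{t_n}(t_n-s)^{\gamma-1}\diff{s} = t_n^{\gamma}/\gamma \le T^\gamma/\gamma$, the bound follows uniformly in $n$.

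The main obstacle is exactly this last integrability balance: the order-two operator estimate carries the non-integrable-looking singularity $(t_n-s)^{-1}$, and the whole argument hinges on the fact that freezing $f$ at $t_n$ produces a compensating H\"older factor $(t_n-s)^{\gamma}$ that vanishes at the singular endpoint. Integrability of $(t_n-s)^{\gamma-1}$ holds precisely because $\gamma > 0$, which is why the value $\gamma$ of Assumption~\ref{as:f} enters here; choosing any smaller $\rho$ would relax the singularity but cost order, so $\rho = 2$ is the right choice to recover the full spatial/temporal rate $h^2+k$ for this deterministic-type term.
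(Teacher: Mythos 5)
Your proof is correct and follows essentially the same route as the paper: replace $S_{k,h}^{n-j+1}$ by the interpolant $\overline{S}_{k,h}(t_n-s)$, treat the frozen term $f(t_n,X(t_n))$ with the integrated estimate \eqref{eq:F_k,hInt}, and control the difference term with the singular estimate \eqref{eq:F_k,h(t)1} at $\rho=2$ compensated by the regularity of $s\mapsto f(s,X(s))$. The only (harmless) deviation is that you merge the paper's two difference terms $J_1^n$ and $J_2^n$ into one by downgrading the H\"older exponent $\tfrac12$ of $X$ to $\gamma\le\tfrac12$, which still yields the integrable singularity $(t_n-s)^{\gamma-1}$.
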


\begin{proof}
  First, we replace $S_{k,h}^{n-j+1}$ by its piecewise constant interpolation
  $\overline{S}_{k,h}$ defined in \eqref{eq:discOpt}. 
  After adding and subtracting a few additional terms we arrive at
  \begin{align*}
    &\Big\| \sum_{j = 1}^n \int_{t_{j-1}}^{t_j} 
    \big( S(t_n -s) - S_{k,h}^{n-j+1} \big) f(s,X(s)) \diff{s}
    \Big\|_{L^p(\Omega_W;H)}\\
    &\quad \leq  \Big\| \int_{0}^{t_n}
    \big( S(t_n -s) - \overline{S}_{k,h}(t_n-s) \big)
    \big( f(s,X(s)) - f(s,X(t_n)) \big) \diff{s}
    \Big\|_{L^p(\Omega_W;H)}\\
    &\qquad + \Big\| \int_{0}^{t_n}
    \big( S(t_n -s) - \overline{S}_{k,h}(t_n-s) \big) 
    \big( f(s,X(t_n)) - f(t_n,X(t_n)) \big) \diff{s}
    \Big\|_{L^p(\Omega_W;H)}\\
    &\qquad + \Big\| \int_{0}^{t_n}
    \big( S(t_n -s) - \overline{S}_{k,h}(t_n-s) \big)
    f(t_n,X(t_n)) \diff{s}
    \Big\|_{L^p(\Omega_W;H)}\\
    &\quad =: J_{1}^n + J_{2}^n +J_{3}^n
  \end{align*}
  for all $n \in \{1,\ldots,N_k\}$. We estimate the three terms separately.
  For $J_{1}^n$, we apply estimate \eqref{eq:F_k,h(t)1} with $\rho = 2$,
  Assumption~\ref{as:f}, and the H\"older continuity \eqref{eq:mild_diff} of
  the exact solution. This yields  
  \begin{align*}
    J_{1}^n 
    &\leq  C C_f \big( h^{2} + k \big)
    \int_{0}^{t_n} ( t_n - s )^{-1} \| X(s)-X(t_n) \|_{L^p(\Omega_W;H)}
    \diff{s}\\
    &\leq  C C_f 
    \| X \|_{C^{\frac{1}{2}}([0,T];L^p(\Omega_W;H))}
    \big( h^{2} + k \big)
    \int_{0}^{t_n} ( t_n - s )^{-1 + \frac{1}{2} } \diff{s}\\
    &\leq C C_f T^{\frac{1}{2}} 
    \| X \|_{C^{\frac{1}{2}}([0,T];L^p(\Omega_W;H))} \big( h^{2}+k \big).
  \end{align*}
  Similarly, we obtain that
  \begin{align*}
    J_{2}^n
    &\leq  C C_f \big( h^{2}+k \big) \int_{0}^{t_n}
    ( t_n - s)^{-1 + \gamma} \big(1 + \| X(t_n) \|_{L^p(\Omega_W;H)} \big)
    \diff{s}\\
    &\leq C C_f \frac{1}{\gamma}  T^{\gamma}  \big(1 + 
    \|X\|_{C([0,T];L^p(\Omega_W;H))} \big) \big( h^{2} + k \big).
  \end{align*}
  Concerning the term $J_{3}^n$,
  we apply the estimate \eqref{eq:F_k,hInt}
  and the linear growth bound \eqref{eq3:linear_f} of $f$. This yields
  \begin{align*}
    J_{3}^n
    &\leq C \big( h^{2} + k \big) \big\| f (t_n, X(t_n) ) 
    \big\|_{L^p(\Omega_W;H)} \\
    &\leq C \hat{C}_f \big( 1 + 
    \|X \|_{C([0,T];L^p(\Omega_W;H))}\big)
    \big( h^{2} + k \big).
  \end{align*}
  After combining the estimates for $J_1^n$, $J_2^n$, $J_3^n$ the proof is
  completed.
\end{proof}

\begin{lemma}
  \label{lem:conv_stoch}
  Let Assumptions~\ref{as:A}, \ref{as:g}, and \ref{as:Ritz} be 
  fulfilled with $p \in [2,\infty)$ and $r \in [0,1)$. Then, there exists $C
  \in (0,\infty)$ such that
  \begin{align*}
    &\max_{n \in \{1,\ldots,N_k\}}
    \Big\| \sum_{j = 1}^n \int_{t_{j-1}}^{t_j} 
    \big( S(t_n -s) - S_{k,h}^{n-j+1} \big) g(s) \diff{W(s)}
    \Big\|_{L^p(\Omega_W;H)}
    \\
    &\leq C \big( \|A^{\frac{r}{2}} g \|_{C([0,T];\mathcal{L}_2^0)}
    + T^{\frac{1-r}{2}} \| g \|_{C^{\frac{1}{2}}([0,T];\mathcal{L}_2^0)} \big)
    (h^{1+r}+k^{\frac{1+r}{2}})
  \end{align*} 
  for all $h\in (0,1)$ and $k\in (0,T)$.
\end{lemma}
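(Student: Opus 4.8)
The plan is to collapse the sum of stochastic integrals into a single It\^o integral over $[0,t_n]$ and then reduce to a deterministic quantity via the Burkholder--Davis--Gundy inequality of Proposition~\ref{prop:BDG}. First I would observe that for $s \in [t_{j-1},t_j)$ the argument $t_n - s$ lies in $\big((n-j)k,(n-j+1)k\big]$, so that by the definition \eqref{eq:discOpt} of the piecewise constant interpolation we have $\overline{S}_{k,h}(t_n - s) = S_{k,h}^{n-j+1}$ for almost every such $s$. Hence the sum telescopes into $\int_0^{t_n} \big(S(t_n-s) - \overline{S}_{k,h}(t_n-s)\big) g(s)\diff{W(s)}$. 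Since $g$ is deterministic, Proposition~\ref{prop:BDG} bounds the $L^p(\Omega_W;H)$-norm, uniformly in $n$ up to the constant $C_p$, by $\big(\int_0^{t_n} \|(S(t_n-s) - \overline{S}_{k,h}(t_n-s))g(s)\|_{\mathcal{L}_2^0}^2\diff{s}\big)^{\frac{1}{2}}$.

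The key step is then to split $g(s) = g(t_n) + \big(g(s) - g(t_n)\big)$ and estimate the two resulting contributions with different error bounds from Lemma~\ref{lm:S{k,h}(t)}. For the frozen part $g(t_n)$ I would exploit the spatial regularity $A^{\frac{r}{2}}g$: writing the Hilbert--Schmidt norm as a sum over an orthonormal basis $(\psi_\ell)_{\ell\in\N}$ of $Q^{\frac{1}{2}}(U)$, substituting $\sigma = t_n - s$, and applying the integrated estimate \eqref{eq:F_k,h(t)2} to each vector $g(t_n)\psi_\ell \in \dot{H}^r$ yields
\[
  \int_0^{t_n} \big\|\big(S - \overline{S}_{k,h}\big)(t_n - s)\,g(t_n)\big\|_{\mathcal{L}_2^0}^2\diff{s}
  \le C\,\big(h^{1+r} + k^{\frac{1+r}{2}}\big)^2 \sum_{\ell=1}^\infty \|g(t_n)\psi_\ell\|_r^2,
\]
and the sum on the right equals $\|A^{\frac{r}{2}}g(t_n)\|_{\mathcal{L}_2^0}^2 \le \|A^{\frac{r}{2}}g\|_{C([0,T];\mathcal{L}_2^0)}^2$. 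This produces the first term of the claimed bound.

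For the increment part $g(s) - g(t_n)$ I would instead invoke the singular estimate \eqref{eq:F_k,h(t)1} with $\rho = 1+r$, which only requires $g(s)-g(t_n)\in H$ but contributes a weight $(t_n - s)^{-\frac{1+r}{2}}$, and combine it with the $\frac{1}{2}$-H\"older continuity of $g$ from Assumption~\ref{as:g}, giving $\|g(s) - g(t_n)\|_{\mathcal{L}_2^0} \le \|g\|_{C^{\frac{1}{2}}([0,T];\mathcal{L}_2^0)}\,(t_n-s)^{\frac{1}{2}}$. The two powers combine so that the squared integrand carries the factor $(t_n-s)^{-r}$, and
\[
  \int_0^{t_n} (t_n - s)^{-r}\diff{s} = \frac{t_n^{1-r}}{1-r} \le \frac{T^{1-r}}{1-r},
\]
which is finite precisely because $r < 1$, yielding the second term $C\,T^{\frac{1-r}{2}}\|g\|_{C^{\frac{1}{2}}([0,T];\mathcal{L}_2^0)}\big(h^{1+r}+k^{\frac{1+r}{2}}\big)$ (the factor $(1-r)^{-\frac12}$ being absorbed into $C$).

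The main obstacle is this increment term: recovering the \emph{full} order $h^{1+r}+k^{\frac{1+r}{2}}$ forces the use of the singular bound \eqref{eq:F_k,h(t)1} rather than the integrated one, and it is only the interplay between the temporal $\frac{1}{2}$-H\"older regularity of $g$ and the restriction $r < 1$ that renders the time integral convergent, with the constant degenerating as $r \uparrow 1$. Since every estimate above is uniform in $n$, taking the maximum over $n \in \{1,\ldots,N_k\}$ is harmless, and combining the two contributions completes the proof.
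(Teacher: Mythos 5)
Your proposal is correct and follows essentially the same route as the paper's proof: rewrite the sum as a single stochastic integral via the piecewise constant interpolation $\overline{S}_{k,h}$, apply Proposition~\ref{prop:BDG}, split $g(s) = g(t_n) + (g(s)-g(t_n))$, and estimate the frozen part with \eqref{eq:F_k,h(t)2} and the increment part with \eqref{eq:F_k,h(t)1} for $\rho = 1+r$ combined with the H\"older continuity of $g$, using $r<1$ to integrate the singularity. The only cosmetic difference is that you spell out the orthonormal-basis reduction of the Hilbert--Schmidt norm, which the paper leaves implicit.
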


\begin{proof}
  As in the proof of Lemma~\ref{lem:conv_det}, we first
  replace the discrete-time operator $S_{k,h}$
  by its piecewise constant interpolation $\overline{S}_{k,h}$
  defined in \eqref{eq:discOpt}.
  This enables us to apply Proposition~\ref{prop:BDG} for each $n\in
  \{1,\ldots,N_k\}$. After adding and subtracting an additional term, we obtain
  \begin{align*}
    &\Big\| \sum_{j = 1}^n \int_{t_{j-1}}^{t_j} 
    \big( S(t_n -s) - S_{k,h}^{n-j+1} \big) g(s) \diff{W(s)}
    \Big\|_{L^p(\Omega_W;H)}\\
    &\quad \leq C_p \Big(\int_{0}^{t_n} \big\| 
    \big(S(t_n -s) - \overline{S}_{k,h}(t_n -s) \big) g(t_n) 
    \big\|^2_{\mathcal{L}_2^0} \diff{s} \Big)^{\frac{1}{2}}\\
    &\qquad + C_p \Big( \int_{0}^{t_n} 
    \big\| \big( S(t_n -s) - \overline{S}_{k,h}(t_n -s) \big)
    (g(s)-g(t_n)) \big\|^2_{\mathcal{L}_2^0} \diff{s} 
    \Big)^{\frac{1}{2}}\\ 
    &\quad =:C_p (J_4^n+J_5^n).
  \end{align*}
  For $J^n_4$ we first apply \eqref{eq:F_k,h(t)2}.
  Then Assumption~\ref{as:g} yields
  \begin{align*} 
    J_4^n \leq C (h^{1+r}+k^{\frac{1+r}{2}})\|A^{\frac{r}{2}}
    g(t_n)\|_{\mathcal{L}_{2}^0} 
    \leq C (h^{1+r}+k^{\frac{1+r}{2}}) \|A^{\frac{r}{2}} g
    \|_{C([0,T];\mathcal{L}_2^0)}. 
  \end{align*}
  For the estimate of $J^n_5$ we make use of \eqref{eq:F_k,h(t)1}
  with $\rho = 1 + r$ and of the H\"older continuity of $g$. This gives 
  \begin{align*} 
    J_5^n 
    &\leq C (h^{1+r}+k^{\frac{1+r}{2}})
    \Big( \int_{0}^{t_n} (t_n-s)^{-(1+r)} \|g(s)-g(t_n)\|^2_{\mathcal{L}_2^0}
    \diff{s} \Big)^{\frac{1}{2}}\\
    &\leq C \| g \|_{C^{\frac{1}{2}}([0,T];\mathcal{L}_2^0)} (h^{1+r}+k^{\frac{1+r}{2}})
    \Big( \int_{0}^{t_n}(t_n-s)^{-r} \diff{s} \Big)^{\frac{1}{2}}\\
    &\leq C T^{\frac{1-r}{2}} \| g \|_{C^{\frac{1}{2}}([0,T];\mathcal{L}_2^0)}
    (h^{1+r}+k^{\frac{1+r}{2}}). 
  \end{align*}
  Combining the two estimates then yields the assertion.
\end{proof}

\begin{remark}
  As also discussed in \cite[Remark 5.6]{kruse2014b}, 
  the result of Lemma~\ref{lem:conv_stoch} does not
  hold true in the border case $r=1$. The reason for this is that the singularity
  caused by the error estimate \eqref{eq:F_k,h(t)1} is no longer integrable
  for $r = 1$. However, observe that this problem does not occur if 
  $g$ is constant since the term $J_5^n$ is then equal to zero or if $g$ is
  H\"older continuous with an exponent larger than $\frac{1}{2}$.
\end{remark}

Finally, it remains to estimate the last term on the right hand side of
\eqref{eq:splitres}. To this end, we first insert the definition \eqref{eq:Psi}
and obtain for every $n \in \{1,\ldots,N_k\}$ 
\begin{align}
  \label{eq:splitPhi}
  \begin{split}
    & \Big\| \sum_{j = 1}^n S_{k,h}^{n-j} 
    \Big( - \int_{t_{j-1}}^{t_j} S_{k,h} f(s,X(s)) \diff{s}
    + \int_{t_{j-1}}^{t_j} S_{k,h} g(s) \diff{W(s)}\\
    &\qquad \qquad \qquad \qquad  - \Phi_{k,h}^j( X(t_{j-1}), \tau_j) \Big)
    \Big\|_{L^p(\Omega;H)}\\
    &\quad \leq  \Big\| \sum_{j = 1}^n S_{k,h}^{n-j+1} 
    \int_{t_{j-1}}^{t_j} \big( f(s,X(s)) - f(t^\tau_j, X(t^\tau_j)) \big)
    \diff{s} \Big\|_{L^p(\Omega;H)}\\
    &\qquad + \Big\| k \sum_{j = 1}^n S_{k,h}^{n-j+1}
    \big( f(t^\tau_j, X( t^\tau_j) ) -
    f \big( t^\tau_j, \Psi_{h,k}^j(X(t_{j-1}),\tau_j) \big) \big)
    \Big\|_{L^p(\Omega;H)}\\
    &\qquad + \Big\| \sum_{j = 1}^n S_{k,h}^{n-j+1}  \int_{t_{j-1}}^{t_j}
    \big( g(s) -g(t^\tau_j) \big) \diff{W(s)} \Big\|_{L^p(\Omega;H)},
  \end{split}
\end{align}
where we recall that $t^\tau_j=t_{j-1}+\tau_j k.$
In the following we derive estimates for each term on the right hand side
separately. The estimate of the first term is related to a randomized
quadrature rule for Hilbert space valued stochastic processes. We refer to
\cite{haber1966, haber1967} for the origin of such quadrature rules.
The presented proof is an adaptation of similar results from \cite{kruse2017,
kruse2017b}. Observe that classical methods require additional smoothness of
the mapping $f \colon H \to H$ in order to derive the same convergence rates.
Compare further with \cite{kruse2014b, wang2017}.

\begin{lemma}
  \label{lem:Phi1}
  Let Assumption~\ref{as:A} to Assumption~\ref{as:g} be fulfilled with 
  $p \in [2,\infty)$ and $\gamma \in (0,\frac{1}{2}]$.
  Then there exists $C \in (0,\infty)$ such that
  for every $h \in (0,1)$, $k \in (0,T)$
  \begin{align*}
   &\max_{n\in \{1,\ldots,N_k\}}\Big\| \sum_{j = 1}^n S_{k,h}^{n-j+1} 
    \int_{t_{j-1}}^{t_j} \big( f(s,X(s)) -f(t^\tau_j,
    X(t^\tau_j)) \big) \diff{s} \Big\|_{L^p(\Omega;H)}
    \\
    &\quad \leq C\big(1+ \| X \|_{C^{\frac{1}{2}}([0,T];L^p(\Omega_W;H))} \big) 
    k^{\gamma+\frac{1}{2}},
  \end{align*}
  where $t^\tau_j=t_{j-1}+\tau_j k$.
\end{lemma}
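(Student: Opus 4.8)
The plan is to exploit the fact that the randomized node turns the quadrature error into a sum of conditionally centered increments, so that the discrete Burkholder--Davis--Gundy inequality (Proposition~\ref{prop:BDG_discrete}) produces an additional factor $k^{\frac{1}{2}}$ of square-root cancellation. I would write $\xi_j := \int_{t_{j-1}}^{t_j} \big( f(s,X(s)) - f(t^\tau_j, X(t^\tau_j)) \big) \diff{s}$, so that the quantity to be estimated is $\max_n \| \sum_{j=1}^n S_{k,h}^{n-j+1} \xi_j \|_{L^p(\Omega;H)}$. The crucial observation is that the mild solution $X$ is $(\F^W_t)$-adapted and hence independent of the family $\tau = (\tau_j)$, while $t^\tau_j \sim \mathcal{U}(t_{j-1},t_j)$. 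Consequently, since $\int_{t_{j-1}}^{t_j} f(t^\tau_j, X(t^\tau_j)) \diff{s} = k\, f(t^\tau_j, X(t^\tau_j))$ and, by the change of variables $s = t_{j-1} + \tau_j k$, $\E_\tau\big[ k\, f(t^\tau_j,X(t^\tau_j)) \,\big|\, \F^W_T \big] = \int_{t_{j-1}}^{t_j} f(s,X(s)) \diff{s}$, the randomized quadrature is unbiased, that is $\E[\xi_j \,|\, \F^W_T] = 0$.

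First I would fix $n \in \{1,\ldots,N_k\}$ and introduce the filtration $\mathcal{G}_m := \F^W_T \otimes \F^\tau_m$ on the product space. Because $S_{k,h}^{n-j+1}$ is a deterministic operator and $\tau_j$ is independent of $\mathcal{G}_{j-1}$, the unbiasedness above gives $\E[ S_{k,h}^{n-j+1}\xi_j \,|\, \mathcal{G}_{j-1}] = 0$, so $M^m := \sum_{j=1}^m S_{k,h}^{n-j+1}\xi_j$ is an $H$-valued $(\mathcal{G}_m)$-martingale. Applying Proposition~\ref{prop:BDG_discrete} to $M$, then the contraction bound $\|S_{k,h}\|_{\mathcal{L}(H)} \le 1$ from \eqref{eq:S_k,hL}, and finally Minkowski's inequality in $L^{p/2}(\Omega)$ (valid since $p \ge 2$), I expect to obtain $\| M^n \|_{L^p(\Omega;H)} \le C_p \big( \sum_{j=1}^n \| \xi_j \|_{L^p(\Omega;H)}^2 \big)^{\frac{1}{2}}$.

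It then remains to establish the per-step bound $\| \xi_j \|_{L^p(\Omega;H)} \le C\big(1 + \|X\|_{C^{\frac{1}{2}}([0,T];L^p(\Omega_W;H))}\big) k^{1+\gamma}$. For this I would use Minkowski's integral inequality to move the $L^p$-norm inside the time integral and split the integrand as $f(s,X(s)) - f(t^\tau_j,X(s))$ plus $f(t^\tau_j,X(s)) - f(t^\tau_j,X(t^\tau_j))$. The first difference is controlled by the temporal Hölder condition of $f$ and the linear growth \eqref{eq3:linear_f} from Assumption~\ref{as:f}, contributing order $k^\gamma$ pointwise; the second by the Lipschitz condition together with the temporal regularity \eqref{eq:mild_diff} of $X$, where the random evaluation time $t^\tau_j$ is handled by conditioning on $\tau$ and using $|s - t^\tau_j| \le k$, contributing order $k^{\frac{1}{2}}$. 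Integrating over the subinterval of length $k$ and using $\gamma \le \frac{1}{2}$ gives the claimed order $k^{1+\gamma}$. Combining this with the martingale estimate and $N_k \le T/k$ yields $\big(\sum_{j=1}^n \|\xi_j\|_{L^p(\Omega;H)}^2\big)^{\frac{1}{2}} \le (N_k)^{\frac{1}{2}} C\big(1+\|X\|_{C^{\frac{1}{2}}}\big) k^{1+\gamma} \le C\big(1+\|X\|_{C^{\frac{1}{2}}}\big) k^{\gamma+\frac{1}{2}}$, uniformly in $n$, which is the assertion.

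The main obstacle is the correct identification and justification of the conditional mean-zero (unbiasedness) property and the associated martingale structure, since this is precisely where the randomization buys the extra half power of $k$: it upgrades the naive deterministic bound of order $k^\gamma$ (coming from $N_k \sim k^{-1}$ summands each of size $k^{1+\gamma}$) to order $k^{\gamma+\frac{1}{2}}$ via square-root cancellation. Care must also be taken that $X$ depends only on the Wiener noise and not on $\tau$, so that conditioning on $\F^W_T$ freezes the integrand path and legitimizes the change of variables used in the unbiasedness step.
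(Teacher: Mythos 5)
Your proof is correct and follows essentially the same route as the paper: conditional unbiasedness of the randomized quadrature node, the induced martingale structure in the $\tau$-index, the discrete Burkholder--Davis--Gundy inequality (Proposition~\ref{prop:BDG_discrete}) for the square-root cancellation over the $N_k\sim k^{-1}$ steps, and the per-step bound of order $k^{1+\gamma}$ from the H\"older regularity of $f$ in time and of $X$. The only immaterial difference is that you set up the martingale on the product space with the filtration $\F^W_T\otimes\F^\tau_m$, whereas the paper fixes a Wiener path $\omega$ outside a null set and applies the inequality on $\Omega_\tau$ alone before integrating over $\Omega_W$.
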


\begin{proof}
  Due to \eqref{eq:mild_max} we have $X\in L^p([0,T]\times\Omega_W;H)$.
  From the linear growth \eqref{eq3:linear_f} of $f$ it then follows 
  that there exists a null set $\mathcal{N}_0 \in \F^W$ such that for all
  $\omega \in \mathcal{N}_0^c = \Omega_W \setminus \mathcal{N}_0$ we have
  $\int_{0}^T \|f(s,X(s,\omega))\|^p \diff{s} < \infty$.  
  Let us therefore fix an arbitrary realization $\omega \in \mathcal{N}_0^c$.
  Then for every $j \in \{1,\ldots,N_k\}$ we obtain
  \begin{align*}
    \int_{t_{j-1}}^{t_j} f\big(s,X(s,\omega)\big) \diff{s}
    & = k \int_{0}^{1} f \big(t_{j-1} + s k, X(t_{j-1} + s k,\omega) \big)
    \diff{s}\\
    &= k \E_{\tau} \big[  f ( t^\tau_j, X(t^\tau_j,\omega)) \big],
  \end{align*}
  due to $t^\tau_j \sim \mathcal{U}(t_{j-1},t_j)$. 
  
  Next, we define a discrete-time
  error process $(E^n)_{n \in \{0,1,\ldots,N_k\}}$ by setting $E^0 \equiv 0 \in
  H$. Further, for every $n \in \{1,\ldots,N_k\}$ we set
  \begin{equation*}
    E^n := \sum_{j=1}^n S_{k,h}^{n-j+1}
    \Big(\int_{t_{j-1}}^{t_j} f( s,X(s,\omega) ) \diff{s}
    - k f ( t^\tau_j,X( t^\tau_j,\omega) ) \Big).
  \end{equation*}
  In addition, for every $n \in \{1,\ldots,N_k\}$ and $m \in \{0,\ldots,n\}$
  we define $M^m_n := S_{k,h}^{n-m} E^m$, which is evidently
  an $H$-valued random variable on the product probability space $(\Omega, \F,
  \P)$. In particular, $M_n := (M^m_n)_{m \in \{0,\ldots,n\}} \subset
  L^p(\Omega;H)$. From $E^n = M^n_n$ we immediately obtain the estimate
  \begin{align}
    \label{eq:est_E}
    \begin{split}
      \big\| E^n \big\|_{L^p(\Omega;H)} 
      &\le \big\| \max_{m \in \{0,\ldots,n\}}
      \|  M^m_n \| \big\|_{L^p(\Omega)}\\
      &= \Big( \int_{\Omega_W} \big\| \max_{m \in \{0,\ldots,n\}} 
      \|  M^m_n(\omega, \cdot) \| \big\|_{L^p(\Omega_\tau)}^p 
      \diff{\P}_W(\omega) \Big)^{\frac{1}{p}}
    \end{split}
  \end{align}
  for all $n \in \{1,\ldots,N_k\}$. Moreover, for each fixed $\omega \in
  \mathcal{N}_0^c$ we observe that the mapping 
  $M^m_n(\omega,\cdot) \colon \Omega_\tau \to H$
  is $\F_m^\tau$-measurable. Further, for each pair of $m_1,m_2 \in \N_0$ with
  $0 \le m_1 < m_2 \le n$ it holds true that 
  \begin{align*}
    &\E_\tau[ M^{m_2}_n(\omega, \cdot)-M^{m_1}_n(\omega, \cdot) |
    \F^{\tau}_{m_1}]\\ 
    &\quad =  \sum_{j=m_1+1}^{m_2} S_{k,h}^{n-j+1}
    \E_\tau \Big[ \int_{t_{j-1}}^{t_j} f\big(s, X(s,\omega)\big) \diff{s}
    - k f\big( t^\tau_j,X(t^\tau_j, \omega)\big) \Big| \F^\tau_{m_1} \Big]\\
    &\quad = \sum_{j=m_1+1}^{m_2} S_{k,h}^{n-j+1} \E_\tau
    \Big[ \int_{t_{j-1}}^{t_j} f\big(s, X(s,\omega)\big) \diff{s}
    - kf\big(t^\tau_j ,X(t^\tau_j,\omega)\big)
    \Big] = 0, 
  \end{align*}
  since $\tau_j$ is independent of $\F^\tau_{m_1}$ for every $j > m_1$.
  Consequently, for every fixed $\omega \in \mathcal{N}_0^c$, 
  the process $M_n(\omega, \cdot) = (M^m_n(\omega, \cdot))_{m \in
  \{0,\ldots,n\}}$ is an $(\F^\tau_m)_{m \in
  \{0,\ldots,n\}}$-adapted $L^p(\Omega_\tau;H)$-martingale.
  Thus,  the discrete-time version of the Burkholder--Davis--Gundy inequality,
  Proposition \ref{prop:BDG_discrete}, is applicable and yields 
  \begin{align*}
    \big\| \max_{m \in \{0,\ldots,n\}} \| M^m_n(\omega, \cdot)
    \| \big\|_{L^p(\Omega_\tau)}
    \le C_p \big\| [M_n(\omega,\cdot)]^{\frac{1}{2}}_{n}
    \big\|_{L^p(\Omega_\tau)}
    \quad \text{ for every } \omega \in \mathcal{N}_0^c.
  \end{align*}
  Next, we insert this and the quadratic variation of $M_n(\omega, \cdot)$ 
  into \eqref{eq:est_E}. An application of \eqref{eq:S_k,hL} then yields
  \begin{align*}
    \big\| E^n \big\|_{L^p(\Omega;H)}
    &\le C_p \Big( \int_{\Omega_W}
    \E_\tau \Big[ \Big( \sum_{j = 1}^{n}
    \big\|S_{k,h}^{n-j+1}\big\|^2_{\mathcal{L}(H)} \times\\
    &\qquad \qquad 
    \Big\| \int_{t_{j-1}}^{t_j} f( s,X(s,\omega) ) \diff{s}
    - k f\big(t^\tau_j,X(t^\tau_j,\omega)\big)
    \Big\|^2 \Big)^{\frac{p}{2}} \Big] 
    \diff{\P}_W(\omega) \Big)^{\frac{1}{p}}\\     
    &\le C_p \Big\| \sum_{j = 1}^{n} \Big\| 
    \int_{t_{j-1}}^{t_j}  \big( f (s,X(s) )
    - f ( t^\tau_j, X(t^\tau_j) ) \big)  \diff{s} 
    \Big\|^2 \, \Big\|_{L^{\frac{p}{2}}(\Omega)}^{\frac{1}{2}}\\
    &\le C_p \Big( \sum_{j = 1}^{n} \Big( \int_{t_{j-1}}^{t_j}
    \big\| f\big(s,X(s)\big)
    - f \big(t^\tau_j,X(t^\tau_j)\big) \big\|_{L^p(\Omega;H)} \diff{s}
    \Big)^2 \Big)^{\frac{1}{2}},
  \end{align*}
  where the last step follows from an application of the triangle
  inequality for the $L^{\frac{p}{2}}(\Omega)$-norm. 
  Next, we make use of Assumption~\ref{as:f} and obtain for every $s \in
  [t_{j-1}, t_j]$, $j \in \{1,\ldots,n\}$, the bound
  \begin{align*}
    \big\| f (s,X(s)) - f (t^\tau_j,X(t^\tau_j) ) \big\|
    \le C_f \big( 1 + \| X(s) \| \big) | s - t^\tau_j|^\gamma
    + C_f \big\| X(s) - X(t^\tau_j) \big\|,
  \end{align*}
  which together with \eqref{eq:mild_diff} implies 
  \begin{align*}
    & \big\| f (s,X(s)) - f (t^\tau_j,X(t^\tau_j) ) \big\|_{L^p(\Omega;H)}\\
    &\quad \le C_f \big( 1 + \sup_{t \in [0,T]} \| X(t) \|_{L^p(\Omega_W;H)}
    \big) k^{\gamma}  + C C_f  \| X \|_{C^{\frac{1}{2}}([0,T];L^p(\Omega_W;H))} 
    k^{\frac{1}{2}}.
  \end{align*}
  Altogether, this shows 
  \begin{align*}
    \big\| E^n \big\|_{L^p(\Omega;H)} \le C_p C_f (1 + C ) T^{\frac{1}{2}}
    \big(1 +  \| X \|_{C^{\frac{1}{2}}([0,T];L^p(\Omega_W;H))} \big) 
    k^{\gamma + \frac{1}{2}}.
  \end{align*}
  This completes the proof.
\end{proof}

Let us now turn to the second term on the right hand side of 
\eqref{eq:splitPhi}.

\begin{lemma}
  \label{lem:Phi2}
  Let Assumption \ref{as:A} to Assumption \ref{as:g} be fulfilled with 
  $p \in [2,\infty)$ and $r \in [0,1)$. Then there exists $C \in (0,\infty)$
  such that for every $k \in (0,T)$, $h \in (0,1)$ we have
  \begin{align}
    \begin{split}
      &\max_{n \in \{1,\ldots,N_k\}} 
      \Big\| k \sum_{j = 1}^n S_{k,h}^{n-j+1} 
      \big( f(t^\tau_j, X(t^\tau_j)) -
      f(t^\tau_j,\Psi_{h,k}^j(X(t_{j-1}),\tau_j)) 
      \big) \Big\|_{L^p(\Omega;H)}\\ 
      &\qquad \qquad \leq C \big(1 + \sup_{t \in [0,T]} \| X(t)
      \|_{L^p(\Omega_W;\dot{H}^{1+r})} \big) 
      \big( h^{1+r} + k^{\frac{1+r}{2}} \big),
    \end{split}
  \end{align}
  where $t^\tau_j=t_{j-1}+\tau_j k$. 
\end{lemma}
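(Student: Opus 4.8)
The plan is to reduce the assertion to a uniform pointwise bound on the one-step defect. Since $\|S_{k,h}\|_{\mathcal{L}(H)}\le 1$ by \eqref{eq:S_k,hL} and $f$ is Lipschitz in its second argument (Assumption~\ref{as:f}), the triangle inequality gives
\begin{align*}
  \Big\| k \sum_{j=1}^n S_{k,h}^{n-j+1}\big( f(t^\tau_j,X(t^\tau_j)) - f(t^\tau_j,\Psi_{k,h}^j(X(t_{j-1}),\tau_j)) \big)\Big\|_{L^p(\Omega;H)}
  \le C_f\, k \sum_{j=1}^n \big\| X(t^\tau_j) - \Psi_{k,h}^j(X(t_{j-1}),\tau_j) \big\|_{L^p(\Omega;H)}.
\end{align*}
Because $k N_k \le T$, it then suffices to bound each summand by $C\big(1 + \sup_{t\in[0,T]}\|X(t)\|_{L^p(\Omega_W;\dot{H}^{1+r})}\big)(h^{1+r}+k^{\frac{1+r}{2}})$, uniformly in $j$ and in $\tau_j \in [0,1]$.

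To this end I would write the mild solution locally on $[t_{j-1},t^\tau_j]$ as
\begin{align*}
  X(t^\tau_j) = S(\tau_j k) X(t_{j-1}) - \int_{t_{j-1}}^{t^\tau_j} S(t^\tau_j - s) f(s,X(s)) \diff{s} + \int_{t_{j-1}}^{t^\tau_j} S(t^\tau_j - s) g(s) \diff{W(s)},
\end{align*}
and subtract the definition \eqref{eq:Psi} of $\Psi_{k,h}^j$, using $\Delta_{\tau_j k} W(t_{j-1}) = \int_{t_{j-1}}^{t^\tau_j}\diff{W(s)}$. The difference then splits into a semigroup term $(S(\tau_j k) - S_{\tau_j k,h})X(t_{j-1})$, a drift quadrature term, and a stochastic term, which I would estimate separately.

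The drift term is an ordinary Bochner integral over an interval of length $\tau_j k \le k$ whose integrand is bounded in $L^p(\Omega;H)$ by the linear growth \eqref{eq3:linear_f} of $f$ together with the contractivity of $S$ and of $S_{\tau_j k,h}$; it is therefore of order $k$, hence dominated by $k^{\frac{1+r}{2}}$ for $r<1$, and the random upper limit is harmless since the integrand is nonnegative after taking norms. For the stochastic term I would condition on $\tau_j$, which is independent of $W$, apply Proposition~\ref{prop:BDG} in the $W$-variable, and integrate over $\tau_j$, exactly as for the term $I_2$ in the proof of Lemma~\ref{lem:stab}. Splitting the integrand as $S(t^\tau_j - s)(g(s)-g(t_{j-1})) + (S(t^\tau_j - s) - S_{\tau_j k,h})g(t_{j-1})$, the first part is of order $k$ by the $\frac{1}{2}$-H\"older continuity of $g$, while for the second I use that $S_{\tau_j k,h} = \overline{S}_{\tau_j k,h}(t^\tau_j - s)$ for $s \in (t_{j-1},t^\tau_j)$, so that after the substitution $u = t^\tau_j - s$ the estimate \eqref{eq:F_k,h(t)2}, applied with step size $\tau_j k$ and $x = g(t_{j-1})$, produces the factor $(h^{1+r}+k^{\frac{1+r}{2}})\|A^{\frac{r}{2}}g\|_{C([0,T];\mathcal{L}_2^0)}$.

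The delicate point, which I expect to be the main obstacle, is the semigroup term. The node $\tau_j k$ lands exactly at the first grid point of the step-$\tau_j k$ interpolation, where $\overline{S}_{\tau_j k,h}$ equals $S_{\tau_j k,h}^2$ rather than $S_{\tau_j k,h}$, so \eqref{eq:F_k,h(t)1r} cannot be invoked at the endpoint directly. Instead I would apply \eqref{eq:F_k,h(t)1r} with $\rho = 1+r$ and step size $\tau_j k$ at times $u \in (0,\tau_j k)$, where $\overline{S}_{\tau_j k,h}(u) = S_{\tau_j k,h}$, obtaining $\|(S(u)-S_{\tau_j k,h})X(t_{j-1})\| \le C(h^{1+r}+k^{\frac{1+r}{2}})\|X(t_{j-1})\|_{1+r}$ with a constant independent of $u$; letting $u \uparrow \tau_j k$ and using strong continuity of the semigroup transfers the bound to $u = \tau_j k$. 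Alternatively one may absorb $S(\tau_j k)-S(u)$ through the smoothing bound $\|(S(t)-\mathrm{Id})x\| \le t^{\frac{1+r}{2}}\|x\|_{1+r}$, which follows from the spectral representation under Assumption~\ref{as:A}. Taking $L^p(\Omega;H)$-norms and using $\tau_j \le 1$ then closes the estimate. Throughout one must keep track of the product space $(\Omega,\F,\P)$, but since all three bounds are uniform in $\tau_j$, the $\tau_j$-dependence of both the random node and of $S_{\tau_j k,h}$ is fully controlled.
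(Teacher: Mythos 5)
Your proposal is correct and follows essentially the same route as the paper: reduce via the contractivity of $S_{k,h}$ and the Lipschitz continuity of $f$ to a uniform bound on $\|X(t^\tau_j)-\Psi_{k,h}^j(X(t_{j-1}),\tau_j)\|_{L^p(\Omega;H)}$, then split into the semigroup-approximation, drift, and stochastic terms and estimate them exactly as you describe (the paper writes the $\tau_j$-average as $\frac{1}{k}\int_0^k\|(S(\theta)-S_{\theta,h})X(t_{j-1})\|^p\,\mathrm{d}\theta$ and invokes the discrete-time version of \eqref{eq:F_k,h(t)1r} at the grid point $\theta$, which disposes of the endpoint issue you flag; your limiting argument is an acceptable substitute).
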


\begin{proof}
  First, we fix arbitrary parameter values for $h \in (0,1)$ and $k \in (0,T)$.
  Then, applications of the triangle inequality, the stability estimate
  \eqref{eq:S_k,hL}, and Assumption~\ref{as:f} yield the estimate
  \begin{align*}
    &\max_{n \in \{1,\ldots,N_k\}} 
    \Big\| k \sum_{j = 1}^n S_{k,h}^{n-j+1} 
    \big( f(t^\tau_j, X(t^\tau_j)) 
    - f(t^\tau_j,\Psi_{h,k}^j(X(t_{j-1}),\tau_j)) 
    \big) \Big\|_{L^p(\Omega;H)}\\
    &\quad \le C_f k \sum_{j = 1}^{N_k} 
    \big\| X(t^\tau_j) - \Psi_{h,k}^j(X(t_{j-1}),\tau_j)
    \big\|_{L^p(\Omega;H)}.
  \end{align*}
  Next, let $j \in \{1,\ldots,N_k\}$ be arbitrary. After inserting the 
  variation of constants formula \eqref{eq:mild} for the mild solution
  and the definition \eqref{eq:Psi} of $\Psi^j_{h,k}$ we obtain
  \begin{align}
    \label{eq:lastterm}
    \begin{split}
      &\big\| X(t^\tau_j) - \Psi_{h,k}^j(X(t_{j-1}),\tau_j)
      \big\|_{L^p(\Omega;H)}\\
      &\quad \le \big\| ( S(\tau_j k) - S_{\tau_j k, h} ) X(t_{j-1})
      \big\|_{L^p(\Omega;H)}\\
      &\qquad + \Big\| \int_{t_{j-1}}^{t^\tau_j} \big( S(t^\tau_j - s) f(s,X(s))
      - S_{\tau_j k,h} f(t_{j-1}, X(t_{j-1})) \big) \diff{s}
      \Big\|_{L^p(\Omega;H)}\\
      &\qquad + \Big\| \int_{t_{j-1}}^{t^\tau_j} \big( S(t^\tau_j-s) g(s)
      -S_{\tau_j k,h} g(t_{j-1}) \big) \diff{W(s)} \Big\|_{L^p(\Omega;H)}\\
      &\quad =: J_6^j + J_7^j + J_8^j. 
    \end{split}
  \end{align} 
  We estimate the three terms separately. 

  The estimate of $J_6^j$ follows at once from Lemma~\ref{lm:S{k,h}(t)} by
  taking note of
  \begin{align*}
   J_6^j &= \big( \E_\tau \big[  \| ( S(\tau_j k) - S_{\tau_j k, h} ) X(t_{j-1})
    \big\|_{L^p(\Omega_W;H)}^p \big] \big)^{\frac{1}{p}}\\
    & = \Big( \frac{1}{k} \int_{0}^{k} 
    \| ( S(\theta) - S_{\theta, h} ) X(t_{j-1}) \|_{L^p(\Omega_W;H)}^p
    \diff{\theta} \Big)^{\frac{1}{p}}\\
    &\le C \big( h^{1+r} + k^{\frac{1+r}{2}} \big) 
    \big\| X \big\|_{C([0,T];L^p(\Omega_W;\dot{H}^{1+r}))}.
  \end{align*}
  For the estimate of $J_7^j$ it is sufficient to note that the integrand is
  bounded uniformly for all $k \in (0,T)$ and $h \in (0,1)$ due to 
  \eqref{eq:stab_S}, \eqref{eq:S_k,hL}, the linear growth \eqref{eq3:linear_f}
  of $f$, and \eqref{eq:mild_max}. From this we obtain
  \begin{align*}
      J_7^j
      &\le \Big( \E_\tau \Big[ \Big(
      \int_{t_{j-1}}^{t^\tau_j}
      \| f(s, X(s) ) \|_{L^p(\Omega_W;H)}
      + \| f(t_{j-1}, X(t_{j-1}) ) \|_{L^p(\Omega_W;H)}
      \diff{s} \Big)^p \Big]
      \Big)^{\frac{1}{p}}\\
      &\le 2 \hat{C}_f 
      \big(1 + \| X \|_{C([0,T];L^p(\Omega_W;H))}
      \big) k.
  \end{align*}
  For the estimate of $J_8^j$ we first add and subtract a term. This leads to 
  \begin{align*}
    J_8^j
    &\le \Big\| \int_{t_{j-1}}^{t^\tau_j} 
    S(t^\tau_j-s) \big( g(s)-g(t_{j-1}) \big) \diff{W(s)}
    \Big\|_{L^p(\Omega;H)} \\
    &\quad + \Big\| \int_{t_{j-1}}^{t^\tau_j} 
    \big( S(t^\tau_j-s) - S_{\tau_j k,h}
    \big) g(t_{j-1}) \diff{W(s)} \Big\|_{L^p(\Omega;H)},
  \end{align*}
  Then we apply
  Proposition~\ref{prop:BDG}, Assumption~\ref{as:g}, and estimates
  \eqref{eq:stab_S} and \eqref{eq:F_k,h(t)2}. Altogether, this yields 
  \begin{align*}
    J_8^j
    &\leq \Big( \E_{\tau} \Big[ \Big\| \int_{t_{j-1}}^{t^\tau_j} 
    S(t^\tau_j-s) \big( g(s)-g(t_{j-1}) \big)
    \diff{W(s)} \Big\|^p_{L^p(\Omega_W;H)} \Big] \Big)^{\frac{1}{p}}\\
    &\quad + \Big( \E_{\tau} \Big[ \Big\| \int_{t_{j-1}}^{t^\tau_j}
    \big( S(t^\tau_j-s) - S_{\tau_j k,h} \big)
    g(t_{j-1}) \diff{W(s)} \Big\|^p_{L^p(\Omega_W;H)} \Big] 
    \Big)^{\frac{1}{p}}\\
    &\leq C_p \Big( \E_{\tau} \Big[ \Big(\int_{t_{j-1}}^{t^\tau_j}
    \| g(s)-g(t_{j-1})\|_{\mathcal{L}_2^0}^2 \diff{s} \Big)^{\frac{p}{2}}
    \Big]\Big)^{\frac{1}{p}}\\ 
    &\quad + C_p \Big( \E_{\tau} \Big[ \Big( \int_{t_{j-1}}^{t^\tau_j}
    \big\| (S(t^\tau_j-s)- S_{\tau_j k,h} ) 
    g(t_{j-1}) \big\|_{\mathcal{L}_2^0}^2
    \diff{s} \Big)^{\frac{p}{2}} \Big] \Big)^{\frac{1}{p}}\\
    &\leq C_p \| g \|_{C^{\frac{1}{2}}([0,T];\mathcal{L}_2^0)}
    k + C_p \big\| A^{\frac{r}{2}} g
    \big\|_{C([0,T];\mathcal{L}_2^0)} 
    \big( h^{1+r} +  k^{\frac{1+r}{2}} \big).
  \end{align*}
  Inserting the estimates for $J_6^j$, $J_7^j$, and $J_8^j$ into
  \eqref{eq:lastterm} then completes the proof.
\end{proof}

In contrast to the drift-randomized Milstein method studied in
\cite{kruse2017b}, we also randomize the diffusion term
in the method \eqref{eq:scheme}. As our final lemma shows, this allows us to
only impose a smoothness condition on $g$ with respect to the norm
$W^{\frac{1}{2} + \gamma,p}(0,T;\mathcal{L}_2^0)$ instead of the 
more restrictive H\"older norm $C^{\frac{1}{2} + \gamma}([0,T];
\mathcal{L}_2^0)$, $\gamma \in [0,\frac{1}{2}]$ usually found in the
literature. We refer to \cite{eisenmann2018} for further quadrature rules 
which apply to stochastic integrals, whose regularity is measured in terms
of fractional Sobolev spaces.

\begin{lemma}
  \label{lem:Phi3}
  Let Assumption~\ref{as:A} be fulfilled. 
  For every $g \in W^{\nu,p}(0,T;\mathcal{L}_2^0)$ with $\nu \in (0,1]$, $p
  \in [2,\infty)$ and for every $k \in (0,T)$, $h \in (0,1)$ it holds true that
  \begin{align}
    \label{eq:error_stochint}
    \begin{split}
      &\max_{n \in \{1,\ldots,N_k\} }
      \Big\| \sum_{j = 1}^n S_{k,h}^{n-j+1} 
      \int_{t_{j-1}}^{t_j} 
      \big( g(s) -  g( t_{j-1} + \tau_j k) \big) \diff{W(s)}
      \Big\|_{L^p(\Omega;H)}\\
      &\qquad \leq C_p T^{\frac{p-2}{2p}} \| g
      \|_{W^{\nu,p}(0,T;\mathcal{L}_2^0)} k^{\nu}.
    \end{split}
  \end{align}
\end{lemma}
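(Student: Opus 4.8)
The plan is to recast the entire sum as a single stochastic integral and then exploit the independence of $\tau$ and $W$ exactly as in the derivation of \eqref{eq:bistability_Gamma}. I would introduce the piecewise-constant integrand $\hat{g}(s) := g(t_{j-1} + \tau_j k)$ for $s \in [t_{j-1},t_j)$, in analogy with \eqref{eq:ghat}. Since $\overline{S}_{k,h}(t_n - s) = S_{k,h}^{n-j+1}$ for a.e.\ $s \in [t_{j-1},t_j)$ by the definition \eqref{eq:discOpt}, the sum over $j$ equals $\int_0^{t_n} \overline{S}_{k,h}(t_n - s)\big( g(s) - \hat{g}(s) \big) \diff{W(s)}$. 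For each fixed $\tau \in \Omega_\tau$ this integrand is deterministic in the $W$-variable, hence $(\F_s^W)$-predictable, and square integrable because $g \in W^{\nu,p}(0,T;\mathcal{L}_2^0) \subset L^2(0,T;\mathcal{L}_2^0)$ for $p \ge 2$. Conditioning on $\tau$, applying Proposition~\ref{prop:BDG} in $(\Omega_W,\P_W)$, and using $\|\overline{S}_{k,h}(t)\|_{\mathcal{L}(H)} \le 1$ from \eqref{eq:S_k,h} together with $\|\overline{S}_{k,h}(t) L\|_{\mathcal{L}_2^0} \le \|L\|_{\mathcal{L}_2^0}$, I would reduce the left-hand side to
\[
  C_p \Big( \E_\tau \Big[ \Big( \int_0^{t_n} \| g(s) - \hat{g}(s) \|_{\mathcal{L}_2^0}^2 \diff{s} \Big)^{\frac{p}{2}} \Big] \Big)^{\frac{1}{p}}.
\]

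Next I would remove the outer $\tau$-expectation by Minkowski's integral inequality for the $L^{p/2}(\Omega_\tau)$-norm (valid since $\frac{p}{2} \ge 1$), which bounds the displayed quantity by $\big( \int_0^{t_n} \| g(s) - \hat{g}(s) \|_{L^p(\Omega_\tau;\mathcal{L}_2^0)}^2 \diff{s} \big)^{\frac12}$. Because $t_{j-1} + \tau_j k \sim \mathcal{U}(t_{j-1},t_j)$, for $s \in [t_{j-1},t_j)$ one has the identity $\| g(s) - \hat{g}(s) \|_{L^p(\Omega_\tau;\mathcal{L}_2^0)}^p = \frac{1}{k} \int_{t_{j-1}}^{t_j} \| g(s) - g(\theta) \|_{\mathcal{L}_2^0}^p \diff{\theta}$, which turns the problem into a purely deterministic estimate for $B := \sum_{j=1}^n \int_{t_{j-1}}^{t_j} \big( \frac{1}{k} \int_{t_{j-1}}^{t_j} \| g(s) - g(\theta) \|^p \diff{\theta} \big)^{2/p} \diff{s}$.

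Then I would bound $B$ by a chain of H\"older inequalities linking it to the Sobolev--Slobodeckij seminorm. H\"older in $s$ with exponents $\frac{p}{2}$ and $\frac{p}{p-2}$ collapses the inner average and yields $B \le k^{1 - 4/p} \sum_j D_j^{2/p}$ with $D_j := \int_{t_{j-1}}^{t_j} \int_{t_{j-1}}^{t_j} \| g(s) - g(\theta) \|^p \diff{\theta} \diff{s}$. For $\nu \in (0,1)$ the bound $|s-\theta| \le k$ on each diagonal block gives $D_j \le k^{1 + \nu p} E_j$ with $E_j := \int_{t_{j-1}}^{t_j} \int_{t_{j-1}}^{t_j} \frac{\| g(s) - g(\theta) \|^p}{|s - \theta|^{1 + \nu p}} \diff{\theta} \diff{s}$, and the blocks being disjoint subsets of $[0,T]^2$ yields $\sum_j E_j \le \| g \|_{W^{\nu,p}(0,T;\mathcal{L}_2^0)}^p$. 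For the endpoint $\nu = 1$ I would instead use $\| g(s) - g(\theta) \| \le \int_{t_{j-1}}^{t_j} \| g'(r) \| \diff{r}$ and H\"older to get the analogous $D_j \le k^{1 + p} \tilde{E}_j$ with $\tilde{E}_j := \int_{t_{j-1}}^{t_j} \| g'(r) \|^p \diff{r}$ and $\sum_j \tilde{E}_j \le \| g' \|_{L^p(0,T;\mathcal{L}_2^0)}^p$. A final H\"older in $j$, namely $\sum_j E_j^{2/p} \le n^{1 - 2/p} \big( \sum_j E_j \big)^{2/p}$ together with $n \le T/k$, then produces $B \le T^{1 - 2/p} \| g \|_{W^{\nu,p}}^2 k^{2\nu}$, so that $B^{1/2} \le T^{(p-2)/(2p)} \| g \|_{W^{\nu,p}(0,T;\mathcal{L}_2^0)} k^\nu$, which is the claim.

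The main obstacle is the careful exponent bookkeeping: the quadrature error must be weighted by the random spacing in a way compatible with the $L^{p/2}(\Omega_\tau)$-moment, and only the correct combination of the two H\"older steps (in $s$ and in $j$) together with the localization of the seminorm reproduces exactly the factor $k^\nu$ and the dimensional prefactor $T^{(p-2)/(2p)}$. The endpoint $\nu = 1$ must also be handled separately, since there the Slobodeckij seminorm has to be replaced by the $L^p$-norm of the weak derivative via the fundamental theorem of calculus.
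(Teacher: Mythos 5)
Your proposal is correct and follows essentially the same route as the paper: the same piecewise-constant process $\hat{g}$, the same recasting of the sum as $\int_0^{t_n}\overline{S}_{k,h}(t_n-s)(g(s)-\hat{g}(s))\diff{W(s)}$, Proposition~\ref{prop:BDG} with the stability bound \eqref{eq:S_k,h}, two H\"older steps producing the factors $k^{1-2/p}$ and $T^{1-2/p}$, the identification of the $\tau$-expectation with the time average $\frac{1}{k}\int_{t_{j-1}}^{t_j}\|g(s)-g(\theta)\|^p\diff{\theta}$, and the separate treatment of $\nu=1$ via the weak derivative and $\nu\in(0,1)$ via the Sobolev--Slobodeckij seminorm. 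The only difference is the order in which the $L^p(\Omega_\tau)$-norm is pulled inside and the H\"older inequalities are applied, which leads to the same exponent arithmetic and the same constant.
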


\begin{proof}
  Fix arbitrary parameter values $k \in (0,T)$, $h \in (0,1)$. As in the proof
  of Lemma~\ref{lem:stab} we introduce the process $\hat{g} \colon [0,T] \times 
  \Omega_\tau \to \mathcal{L}_2^0$ defined by
  \begin{align}
    \label{eqn:Lemma59-g}
    \hat{g}(t):=g(t_{j-1} + \tau_j k), \quad \text{ for } t \in [t_{j-1},t_j),
    \; j \in \{1,\ldots,N_k\}.
  \end{align}
  After inserting this and the piecewise constant interpolation
  $\overline{S}_{k,h}$ of $S_{k,h}$ into the left hand side of
  \eqref{eq:error_stochint}, we obtain for 
  every $n \in \{1,\ldots,N_k\}$
  \begin{align}
    \label{eq:g_step1}
    \begin{split}
      &\Big\| \sum_{j = 1}^n S_{k,h}^{n-j+1} 
      \int_{t_{j-1}}^{t_j} \big( g(s) - g(t_{j-1} + \tau_j k) \big) \diff{W(s)}
      \Big\|_{L^p(\Omega;H)}\\
      &\quad  = \Big\| \int_{0}^{t_n}  \overline{S}_{k,h}(t_n-s) 
      \big( g(s) - \hat{g}(s) \big) \diff{W(s)}\Big\|_{L^p(\Omega;H)}\\
      &\quad \le C_p \Big( \sum_{j = 1}^n \int_{t_{j-1}}^{t_j}
      \big\| g(s) - g(t_{j-1} + \tau_j k)
      \big\|^2_{L^p(\Omega_{\tau};\mathcal{L}_2^0)} 
      \diff{s}\Big)^{\frac{1}{2}},
    \end{split}
  \end{align}
  where we applied Proposition~\ref{prop:BDG} and the stability estimate
  \eqref{eq:S_k,h} in the last step. After two applications of the
  H\"older inequality with exponents $\frac{1}{\rho} + \frac{1}{\rho'} = 1$
  and $\rho = \frac{p}{2}$ we arrive at
  \begin{align}
    \label{eq:g_step2}
    \begin{split}
      &\Big\| \sum_{j = 1}^n S_{k,h}^{n-j+1} 
      \int_{t_{j-1}}^{t_j} \big( g(s) - g(t_{j-1} + \tau_j k) \big) 
      \diff{W(s)}
      \Big\|_{L^p(\Omega;H)}\\
      &\quad \le C_p 
      \Big( \sum_{j = 1}^n k^{1 - \frac{2}{p}} \Big( \int_{t_{j-1}}^{t_j}
      \big\| g(s) - g(t_{j-1} + \tau_j k) 
      \big\|^p_{L^p(\Omega_{\tau};\mathcal{L}_2^0)}
      \diff{s} \Big)^{\frac{2}{p}} \Big)^{\frac{1}{2}}\\
      &\quad \le C_p T^{\frac{p - 2}{2p}} \Big( \sum_{j = 1}^n
      \int_{t_{j-1}}^{t_j}
      \big\| g(s) - g(t_{j-1} + \tau_j k) 
      \big\|^p_{L^p(\Omega_{\tau};\mathcal{L}_2^0)}
      \diff{s} \Big)^{\frac{1}{p}}\\
      &\quad = C_p T^{\frac{p - 2}{2p}} \Big( \sum_{j = 1}^n
      \frac{1}{k} \int_{t_{j-1}}^{t_j} \int_{t_{j-1}}^{t_j} 
      \big\| g(s) - g(\theta) \big\|_{\mathcal{L}_{2}^0}^p \diff{\theta}
      \diff{s} \Big)^{\frac{1}{p}}.
    \end{split}
  \end{align}
  First, we discuss the
  case $\nu = 1$, that is $g \in W^{1,p}(0,T;\mathcal{L}_2^0)$.
  Under this condition, there exists an absolutely continuous representative in
  the equivalence class of $g$, for which we obtain the estimate
  \begin{align*}
    \frac{1}{k} \int_{t_{j-1}}^{t_j} \int_{t_{j-1}}^{t_j} 
    \big\| g(s) - g(\theta) \big\|_{\mathcal{L}_{2}^0}^p \diff{\theta}
    \diff{s}
    &= \frac{1}{k} \int_{t_{j-1}}^{t_j} \int_{t_{j-1}}^{t_j}
    \Big\| \int_\theta^s g'(z) \diff{z} \Big\|_{\mathcal{L}_{2}^0}^p 
    \diff{\theta} \diff{s}\\
    &\le k^{p-2} \int_{t_{j-1}}^{t_j} \int_{t_{j-1}}^{t_j} 
    \int_{t_{j-1}}^{t_j} \| g'(z) \|_{\mathcal{L}_2^0}^p \diff{z}
    \diff{\theta} \diff{s}\\
    &= k^p \int_{t_{j-1}}^{t_j} \| g'(z) \|_{\mathcal{L}_2^0}^p \diff{z}
  \end{align*}
  for all $j \in \{1,\ldots,n\}$.
  Inserting this into \eqref{eq:g_step2} then yields the desired estimate for
  $\nu = 1$.

  For the case $\nu \in (0,1)$ we recall the definition
  \eqref{eq:fracSobol} of the fractional Sobolev--Slobodeckij norm. Then the
  estimate of \eqref{eq:g_step2} is continued as follows
  \begin{align*}
    \frac{1}{k} \sum_{j = 1}^n \int_{t_{j-1}}^{t_j} \int_{t_{j-1}}^{t_j} 
    \big\| g(s) - g(\theta) \big\|_{\mathcal{L}_{2}^0}^p \diff{\theta}
    \diff{s}
    &\le k^{\nu p} \sum_{j = 1}^n \int_{t_{j-1}}^{t_j} \int_{t_{j-1}}^{t_j} 
    \frac{\big\| g(s) - g(\theta) \big\|_{\mathcal{L}_{2}^0}^p}{|s -
    \theta|^{1 + \nu p}} \diff{\theta} \diff{s}\\
    &\le k^{\nu p} \| g \|^p_{W^{\nu,p}(0,T;\mathcal{L}_2^0)}.
  \end{align*}
  Together with \eqref{eq:g_step2} this completes the proof.
\end{proof}

Combining Lemma~\ref{lem:cons_ini} to Lemma~\ref{lem:Phi3} gives immediately
an estimate for (\ref{eq:splitres}), which is essentially the
consistency of the numerical scheme (\ref{eq:scheme}): 

\begin{theorem}
  \label{thm:consistency}
  Let Assumptions~\ref{as:A} to \ref{as:Ritz} be fulfilled for some $p \in
  [2,\infty)$, $r \in [0,1)$, and $\gamma \in (0,\frac{1}{2}]$. 
  Then there exists a constant $C \in (0,\infty)$ such that for every $h \in
  (0,1)$ and $k \in (0,T)$ 
  \begin{align*}
    &\big\| \mathcal{R}_{k,h}[ X|_{\pi_k} ] \big\|_{S,p,h} \\
    &\; \le C \big(1 + \| X \|_{C([0,T];L^p(\Omega_W;\dot{H}^{1+r}))}
    + \| X \|_{C^{\frac{1}{2}}([0,T];L^p(\Omega_W;H))} \big)
    \big( h^{1+r} + k^{\frac{1}{2} + \min(\frac{r}{2}, \gamma)} \big),
  \end{align*}
  where $X$ denotes the mild solution \eqref{eq:mild} to the stochastic
  evolution equation \eqref{eq:SPDE}.
\end{theorem}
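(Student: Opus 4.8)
The plan is to bound the stochastic Spijker norm of the residual through the splitting \eqref{eq:splitres} and then estimate each of its five summands by the lemma tailored to it. Since \eqref{eq:splitres} already isolates exactly these five contributions, the theorem reduces to collecting the individual bounds and identifying which powers of $h$ and $k$ dominate; essentially all of the analytic work has already been carried out in the preceding lemmas.

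First I would treat the four leading terms directly. The initial-value term $\| X_0 - \xi_h \|_{L^p(\Omega_W;H)}$ is controlled by Lemma~\ref{lem:cons_ini}, yielding a contribution of order $h^{1+r}$. The term $\max_n \| (S(t_n) - S_{k,h}^n) X_0 \|_{L^p(\Omega_W;H)}$ is handled by the discrete-time analogue of \eqref{eq:F_k,h(t)1r} (the lemma directly preceding Lemma~\ref{lem:conv_det}) and contributes $h^{1+r} + k^{\frac{1+r}{2}}$. The deterministic drift term is bounded by Lemma~\ref{lem:conv_det}, giving order $h^2 + k$, and the stochastic term by Lemma~\ref{lem:conv_stoch}, giving $h^{1+r} + k^{\frac{1+r}{2}}$. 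In each case the regularity norms $\| X \|_{C([0,T];L^p(\Omega_W;\dot{H}^{1+r}))}$ and $\| X \|_{C^{\frac{1}{2}}([0,T];L^p(\Omega_W;H))}$, together with the coefficient constants $\hat{C}_f$, $C_g$ from Assumptions~\ref{as:f}--\ref{as:g}, are absorbed into the final constant $C$.

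Next I would address the fifth and most involved term. Here I would first apply the decomposition \eqref{eq:splitPhi}, which writes it as a sum of three pieces: a randomized-quadrature error for the drift, a Lipschitz-type error between $X(t^\tau_j)$ and the internal stage $\Psi^j_{h,k}$, and a randomized-quadrature error for the diffusion. These are precisely the quantities estimated in Lemmas~\ref{lem:Phi1}, \ref{lem:Phi2}, and \ref{lem:Phi3}: Lemma~\ref{lem:Phi1} contributes $k^{\frac{1}{2}+\gamma}$, Lemma~\ref{lem:Phi2} contributes $h^{1+r} + k^{\frac{1+r}{2}}$, and Lemma~\ref{lem:Phi3}, applied with $\nu = \tfrac{1}{2} + \gamma$ and using $\| g \|_{W^{\frac{1}{2}+\gamma,p}(0,T;\mathcal{L}_2^0)} \le C_g$ from Assumption~\ref{as:g}, contributes $k^{\frac{1}{2}+\gamma}$.

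Finally I would add up all the bounds. For the spatial part, $h^2 \le h^{1+r}$ for $h \in (0,1)$ and $r \le 1$, so the spatial rate is $h^{1+r}$. For the temporal part, the exponents appearing are $\tfrac{1+r}{2} = \tfrac{1}{2} + \tfrac{r}{2}$, the value $1 = \tfrac{1}{2} + \tfrac{1}{2}$, and $\tfrac{1}{2} + \gamma$; since $\gamma \le \tfrac{1}{2}$ and $\tfrac{r}{2} < \tfrac{1}{2}$, the smallest exponent is $\tfrac{1}{2} + \min(\tfrac{r}{2},\gamma)$, and all remaining powers of $k$ are dominated by $k^{\frac{1}{2}+\min(\frac{r}{2},\gamma)}$ up to a factor depending only on $T$ via $k \le T$. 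The main obstacle is not analytic but organizational: one must verify that every summand of \eqref{eq:splitres} and \eqref{eq:splitPhi} is matched to a lemma whose hypotheses are met under Assumptions~\ref{as:A}--\ref{as:Ritz}, and confirm in the dominance check that no contribution is worse than $h^{1+r} + k^{\frac{1}{2}+\min(\frac{r}{2},\gamma)}$. Once this is done, the claimed estimate follows.
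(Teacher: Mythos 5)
Your proposal is correct and follows the paper's proof exactly: the paper likewise obtains Theorem~\ref{thm:consistency} by combining the splitting \eqref{eq:splitres} (and, for the last term, \eqref{eq:splitPhi}) with Lemma~\ref{lem:cons_ini} through Lemma~\ref{lem:Phi3}, applying Lemma~\ref{lem:Phi3} with $\nu = \tfrac{1}{2}+\gamma$ and collecting the rates as you describe. Your dominance check of the exponents matches the stated bound, so nothing is missing.
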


Now we are ready to address the proof of the convergence result, 
Theorem~\ref{th:main}. 

\begin{proof}[Proof of Theorem \ref{th:main}]
  From Theorem \ref{th:stab}, we obtain the bistability of the numerical scheme
  (\ref{eq:scheme}). By simply choosing $Y_k=X|_{\pi_k}$ and
  $Z_k=X_{k,h}$ in (\ref{eq:bistab}) we get
  \begin{align*}
    \|X|_{\pi_k} -X_{k,h} \|_{\infty,p} 
    \leq C_{\mathrm{Stab}} 
    \big\| \mathcal{R}_{k,h}[X|_{\pi_k}] - \mathcal{R}_{k,h}[X_{k,h}] 
    \big\|_{S,p,h}.
  \end{align*}
  Next, we take note of $\mathcal{R}_{k,h}[X_{k,h}] =0$.
  The final assertion then follows from an application 
  of Theorem~\ref{thm:consistency}. 
\end{proof}

\begin{remark} 
  Let us briefly discuss the case of multiplicative noise.
  To be more precise, instead of (\ref{eq:SPDE}), we want to approximate the
  mild solution to a semilinear stochastic evolution equation of the form 
  \begin{align}
    \label{eq:SPDE-Milstein}
    \begin{cases}
      \diff{X(t)} + \big[ A X(t) + f(t,X(t)) \big] \diff{t}
      = g(X(t)) \diff{W(t)},& \quad \text{for } t \in (0,T],\\
      X(0) = X_0,&
    \end{cases}
  \end{align}
  with all assumptions in Section~\ref{sec:str_error} remaining the same expect
  for Assumption~\ref{as:g} on $g$, which is
  replaced, for instance, by \cite[Assumption 2.4]{kruse2014b}. 
  Due to the multiplicative noise, we cannot randomize the stochastic integral
  in the same way as in the additive noise case.
  However, one can still benefit from a randomization of the semilinear drift
  part. More precisely, for each $k \in (0,T)$
  and $h \in (0,1)$, the \emph{drift-randomized Milstein--Galerkin finite
  element method} is given by 
  \begin{align}
    \label{eq:scheme-Milstein}
    \begin{split}
      X_{k,h}^{n, \tau} &= S_{\tau_n k, h}\big[ X_{k,h}^{n-1} 
      - \tau_n k f(t_{n-1}, X_{k,h}^{n-1}) + g(X_{k,h}^{n-1})
      \Delta_{\tau_n k} W(t_{n-1}) \big],\\
      X_{k,h}^{n} &= S_{k,h} \Big[ X_{k,h}^{n-1}
      - k f(t_n^\tau, X_{k,h}^{n, \tau}) + g(X_{k,h}^{n-1})
      \Delta_{k} W(t_{n-1}) \\
      &\quad +\int_{t_{n-1}}^{t_n} g'(X_{k,h}^{n-1}) 
      \Big[ \int_{t_{n-1}}^{\sigma_1}g(X_{k,h}^{n-1}) \diff{W(\sigma_2)} 
      \Big] \diff{W(\sigma_1)} \Big]
    \end{split}
  \end{align}
  for all $n \in \{1,\ldots,N_k\}$ with initial value $X_{k,h}^0 = P_h X_0$.
  All lemmas on the consistency remain valid with the exception of 
  Lemma~\ref{lem:conv_stoch}, Lemma~\ref{lem:Phi2}, and Lemma \ref{lem:Phi3}.
  Instead, we can borrow \cite[Lemma~5.5]{kruse2014b}. An additional modification
  of the estimate of $J_8^j$ in Lemma~\ref{lem:Phi2}
  then yields the same rate of convergence as in Theorem~\ref{thm:consistency}.
  Hence, the convergence result in Theorem \ref{th:main} carries over to the
  multiplicative noise case for the scheme \eqref{eq:scheme-Milstein}. As in
  finite dimensions in \cite{kruse2017b}, the drift-randomization
  technique therefore reduces the regularity conditions on the drift
  semilinearity $f$ 
  significantly. In particular, we do not impose a differentiability condition
  on the mapping $f$ required in \cite{kruse2014b}.
  However, note that the conditions imposed on $g$ in \cite[Assumption
  2.4]{kruse2014b} are rather restrictive. 
\end{remark}


\section{Incorporating a noise approximation}
\label{sec:noise}

In this section we discuss the numerical approximation of a 
$Q$-Wiener process $W\colon [0,T]\times \Omega_W \to U$ with values in
a separable Hilbert space $(U, (\cdot, \cdot)_U, \|\cdot\|_U)$. 
In particular, we investigate how the stability and consistency of the
numerical scheme \eqref{eq:scheme} will be
affected if the noise approximation is incorporated.
Hereby, we follow similar arguments as in \cite[Section~6]{kruse2014b}, which
in turn is based on \cite{barth2012}. 

Our analysis relies on a spectral approximation of the Wiener process. Because
of this we impose the following stronger assumption on the covariance operator 
$Q$.

\begin{assumption}
  \label{as:Q}
  The covariance operator $Q\in \mathcal{L}(U)$ is symmetric,
  nonnegative, and of finite trace.
\end{assumption}

It directly follows from Assumption~\ref{as:Q} that $Q$ is a compact operator.
Moreover, the spectral theorem for compact operators then ensures
the existence of an orthonormal basis $(\varphi_j)_{j\in \mathbb{N}}$ of the
separable Hilbert space $U$ such that 
\begin{equation}
  \label{eq:Qeig}
  Q\varphi_j =\mu_j \varphi_j,\hspace{0.5cm}\mbox{for all } j\in \mathbb{N},
\end{equation}
where $(\mu_j)_{j \in \N}$ are the eigenvalues of $Q$. 

For each $M \in \N$ we introduce a truncated version $Q_M \in
\mathcal{L}(U)$ of the covariance operator $Q$ determined by 
\begin{align}
  \label{eqn:WM_expansion-trun}
  \begin{split}
    Q_M \varphi_j 
    := 
    \begin{cases}
      \mu_j \varphi_j, &\quad \text{ if } j \in \{1,\ldots,M\},\\
      0, &\quad \text{ else}.      
    \end{cases}
  \end{split}
\end{align} 
In the following, we further use the abbreviation $Q_{cM}:=Q-Q_{M}$. Note that
$Q_M$ is of finite rank. We then define a $Q_M$-Wiener process
$W^M\colon[0,T]\times\Omega_W \to U$ by 
\begin{equation}
  \label{eqn:WM_expansion}
  W^M(t) := \sum_{j=1}^M \sqrt{\mu_j} \beta_j(t)\varphi_j,\hspace{0.2cm} t\in
  [0,T], 
\end{equation}
where $\beta_j\colon [0,T]\times\Omega_W \to \mathbb{R}$,
$j\in \mathbb{N}$, is an independent family of standard real-valued Brownian
motions. The link between $W^M$ and the original $Q$-Wiener process $W$ is 
established by the relationship $\beta_j(t) = \frac{1}{\sqrt{\mu_j}}
( W(t), \varphi_j)_U$, as in \cite[Proposition~2.1.10]{rockner2007}.
Moreover, we also define $W^{cM}:=W-W^M$. Note that $W^{cM}$ 
is a $Q_{cM}$-Wiener process and possesses the spectral representation
\begin{align*}
  W^{cM}(t) := \sum_{j=M+1}^\infty \sqrt{\mu_j} \beta_j(t) \varphi_j,
  \hspace{0.2cm} t\in [0,T].
\end{align*}

We now introduce a modification of the numerical scheme \eqref{eq:scheme},
which only uses the increments of the truncated Wiener process $W^M$. 
For every $k \in (0,T)$, $h \in (0,1)$, and $M \in \N$
let the initial value be given by $X_{k,h,M}^0 := P_h X_0$. 
In addition, the random variables
$X_{k,h,M}^n$, $n \in \{1,\ldots,N_k\}$, are defined by the recursion   
\begin{align}
  \label{eq:scheme-truncated}
  \begin{split}
    X_{k,h,M}^{n, \tau} &= S_{\tau_n k,h} \big[ X_{k,h,M}^{n-1}
    - \tau_n k f(t_{n-1}, X_{k,h,M}^{n-1})
    + g(t_{n-1}) \Delta_{\tau_n k} W^{M}(t_{n-1}) \big],\\
    X_{k,h,M}^{n}
    &= S_{k,h} \big[ X_{k,h,M}^{n-1} 
    - k f(t_{n-1} + \tau_n k, X_{k,h,M}^{n, \tau}) 
    + g(t_{n-1} + \tau_n k) \Delta_{k} W^{M}(t_{n-1}) \big] 
  \end{split}
\end{align}
for $n \in \{1,\ldots,N_k\}$, where the linear operators $S_{k,h} \in
\mathcal{L}(H)$ are defined
in \eqref{eq:discOp} and the random variables $(\tau_n)_{n \in \N}$ are
$\mathcal{U}(0,1)$-distributed and independent from each other as well as from
the Wiener process $W$. As in \eqref{eq:defDW} the Wiener increments are given
by  
\begin{align*}
  \Delta_\kappa W^M(t) := W^{M}(t + \kappa) - W^M(t)
\end{align*}
for all $t \in [0,T)$ and $\kappa \in (0,T-t)$.

Analogously to \eqref{eq:Phi} and
\eqref{eq:Psi} the increment functions
$\Phi_{k,h,M}^j \colon H \times [0,1] \times \Omega_W \to H$ and
$\Psi_{k,h,M}^j \colon H \times [0,1] \times \Omega_W \to H$
are then given by
\begin{align}
  \label{eq:Phi-M}
  \begin{split}
    \Phi_{k,h,M}^{j}(x,\tau) 
    &:= -k S_{k,h} f\big(t_{j-1} + \tau k,
    \Psi^{j}_{k,h,M}(x,\tau)\big) 
    + S_{k,h} g(t_{j-1} + \tau k) \Delta_k W^{M}(t_{j-1}) 
  \end{split}  
\end{align}
and
\begin{align}
  \label{eq:Psi-M}
  \Psi_{k,h,M}^{j}(x,\tau) := S_{\tau k,h} \big[ x - \tau k f(t_{j-1},
  x ) + g(t_{j-1}) \Delta_{\tau k} W^{M}(t_{j-1}) \big]
\end{align}
for all $x \in H$ and $\tau \in [0,1]$.

We first study the stability of the truncated scheme
\eqref{eq:scheme-truncated}. 

\begin{theorem}
  \label{thm:stability-M}
  Let Assumptions~\ref{as:A} to \ref{as:g} and Assumption~\ref{as:Q} be
  satisfied. Then, for every $k \in (0,T)$, $h \in (0,1)$, and $M \in \N$
  the numerical scheme \eqref{eq:scheme-truncated} is bistable with respect to
  the norms $\| \cdot \|_{\infty,p}$ and $\| \cdot \|_{S,p,h}$.  
  In particular, the stability constant $C_{\mathrm{Stab}}$ can be
  chosen independently of $M \in \N$. 
\end{theorem}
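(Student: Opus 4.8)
The plan is to follow exactly the route used for the untruncated scheme in Theorem~\ref{th:stab}, namely to verify the hypotheses of the abstract stability theorem \cite[Theorem~3.8]{kruse2014b} for the one-step method determined by the increment functions $\Phi_{k,h,M}$ and $\Psi_{k,h,M}$ from \eqref{eq:Phi-M} and \eqref{eq:Psi-M}. First I would observe that the truncated recursion \eqref{eq:scheme-truncated} is again of the abstract form \eqref{eq:onestep}, now with the family $\Phi_{k,h,M}$ in place of $\Phi_{k,h}$, while the linear part $S_{k,h}$ and the initial value $\xi_h = P_h X_0$ are literally unchanged. Since $W^M$ is an ordinary Wiener process on the same filtered space, the measurability and adaptedness of the increment functions carry over verbatim, and Lemma~\ref{lem:estimate_Sdiscrete} together with the initial-value bound \eqref{eq:cond_ini} remain available without modification.

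The substance of the proof is therefore to establish the analogue of Lemma~\ref{lem:stab} for $\Phi_{k,h,M}$ with constants $C_{\Phi,0}$ and $C_{\Phi,1}$ that do not depend on $M$. For the Lipschitz-type bound \eqref{eq:cond_PhiLip} this is immediate: the noise enters the increment functions only through the additive term $S_{k,h} g(\cdot) \Delta_k W^M(\cdot)$, which is independent of the state variable $x$ and hence cancels in every difference $\Phi_{k,h,M}^j(Y,\tau) - \Phi_{k,h,M}^j(Z,\tau)$. Repeating the computation in the proof of Lemma~\ref{lem:stab} verbatim, the resulting constant is again $C_{\Phi,1} = C_f(1 + T C_f)$, which involves neither the noise nor $M$.

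The only place where $M$ could enter is the boundedness estimate \eqref{eq:cond_Phi0}, whose stochastic contribution is the integral of $\overline{S}_{k,h}(t_n - \cdot)\,\hat g(\cdot)$ against $W^M$ together with the diffusion part of $\Psi_{k,h,M}^j(0,\tau_j)$. Here the key observation, and the one step on which $M$-uniformity hinges, is that $W^M$ is a $Q_M$-Wiener process with $Q_M \le Q$, so that in the eigenbasis $(\varphi_j)_{j \in \N}$ of $Q$ one has
\begin{align*}
  \| L \|_{\mathcal{L}_2(Q_M^{\frac{1}{2}}(U),H)}^2
  = \sum_{j=1}^M \mu_j \| L \varphi_j \|^2
  \le \sum_{j=1}^\infty \mu_j \| L \varphi_j \|^2
  = \| L \|_{\mathcal{L}_2^0}^2
\end{align*}
for every $L \in \mathcal{L}_2^0$. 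Applying this with $L = g(s)$ and invoking Assumption~\ref{as:g} shows that every bound in the proof of Lemma~\ref{lem:stab} in which $\|g(s)\|_{\mathcal{L}_2^0}$ appeared after using Proposition~\ref{prop:BDG} and \eqref{eq:S_k,hL} persists with the identical constant, now for the integral against $W^M$; in particular the estimate \eqref{eq:bistability_Gamma} and the uniform bound on $\|\Psi_{k,h,M}^j(0,\tau_j)\|_{L^p(\Omega;H)}$ go through unchanged. Thus \eqref{eq:cond_Phi0} holds with the same $C_{\Phi,0}$ as in \eqref{eq:C_Phi}, independent of $M$.

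With the $M$-uniform versions of \eqref{eq:cond_Phi0} and \eqref{eq:cond_PhiLip} and the unchanged inputs from Lemma~\ref{lem:estimate_Sdiscrete} and \eqref{eq:cond_ini} in hand, a direct application of \cite[Theorem~3.8]{kruse2014b}, exactly as in the passage preceding Theorem~\ref{th:stab}, yields bistability of \eqref{eq:scheme-truncated} with a stability constant $C_{\mathrm{Stab}}$ that depends only on $C_{\Phi,0}$, $C_{\Phi,1}$, $T$, and $p$, and therefore not on $M$. I expect no genuine obstacle beyond carefully bookkeeping the $M$-independence of each constant; the only conceptual point is the operator monotonicity $Q_M \le Q$, which controls the truncated stochastic integrals uniformly.
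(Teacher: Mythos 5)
Your proposal is correct and follows essentially the same route as the paper: reverify the estimates of Lemma~\ref{lem:stab} for the truncated increment functions, with the $M$-uniformity resting on the bound $\|g(t)Q_M^{\frac{1}{2}}\|_{\mathcal{L}_2}^2 \le \|g(t)\|_{\mathcal{L}_2^0}^2$ inserted into \eqref{eq:bistability_Gamma}, which is precisely the paper's argument in \eqref{eqn:g-QM}. Your additional remark that the Lipschitz bound \eqref{eq:cond_PhiLip} is untouched because the additive noise cancels in differences is a useful (if implicit in the paper) clarification.
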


\begin{proof}
  For the proof we observe that all estimates in Lemma~\ref{lem:stab} hold also
  true for the noise truncated scheme \eqref{eq:scheme-truncated}. To prove 
  the independence of the stability constant of the parameter $M \in \N$ 
  we recall that the family of eigenfunctions $(\varphi_j)_{j \in \N}$ of Q
  is an orthonormal basis of $U$. The Hilbert--Schmidt norm of $g(t) \circ
  Q^{\frac{1}{2}}_M$ is therefore bounded by 
  \begin{align}
    \label{eqn:g-QM}
    \begin{split}
      &\|g(t)Q_{M}^{\frac{1}{2}}\|^2_{\mathcal{L}_2}
      = \sum_{j=1}^{M} \mu_j \| g(t) \varphi_j\|^2
      \leq \sum_{j=1}^{\infty} \mu_j \|g(t) \varphi_j\|^2
      = \| g(t) \|_{\mathcal{L}_2^0}^2,
    \end{split}
  \end{align}
  for all $t \in [0,T]$ and $M \in \N$. Inserting this into
  \eqref{eq:bistability_Gamma} yields the assertion.
\end{proof}

It remains to address the consistency of the noise truncated scheme
\eqref{eq:scheme-truncated}.
For this we first introduce the associated residual operator
$\mathcal{R}_{k,h,M} \colon \mathcal{G}_k^p \to \mathcal{G}_k^p$ given by 
\begin{align}
  \label{eq:residual-M}
  \begin{cases}
    \mathcal{R}_{k,h,M}[Z_k](t_0) := Z_k^0 - \xi_h,&\\
    \mathcal{R}_{k,h,M}[Z_k](t_n) := Z_k^n - S_{k,h} Z_k^{n-1} -
    \Phi_{k,h,M}^{n}(Z_k^{n-1},\tau_n),& n \in \{1,\ldots,N_k\},
  \end{cases}
\end{align}
for each grid function $Z_k \in \mathcal{G}_k^p$.
In order to control the truncation error with respect to the parameter
$M\in \mathbb{N}$ we need the following additional assumption:

\begin{assumption}
  \label{as:QM}
  Let $(\varphi_m)_{m \in \N} \subset U$ and $(\mu_m)_{m \in \N} \subset
  [0,\infty)$ be the families of 
  eigenfunctions and eigenvalues of $Q$ from 
  \eqref{eq:Qeig}. We assume the existence of constants
  $C_Q, \alpha \in (0,\infty)$ such that
  \begin{equation}
    \label{eqn:as-QM}
    \sup_{t \in [0,T]} \Big(\sum_{m=1}^\infty m^{2 \alpha} \mu_m
    \|g(t)\varphi_m\|^2 \Big)^{\frac{1}{2}}\leq C_Q.
  \end{equation} 
\end{assumption} 

We now state the consistency result for the noise truncated scheme
\eqref{eq:scheme-truncated}.

\begin{theorem}
  \label{thm:consistency-M}
  Let Assumptions~\ref{as:A} to \ref{as:Ritz} be fulfilled for some $p \in
  [2,\infty)$, $r \in [0,1)$, and $\gamma \in (0,\frac{1}{2}]$. Let Assumptions
  \ref{as:Q} and \ref{as:QM} be fulfilled with $\alpha \in (0,\infty)$.
  Then there exists a constant $C \in (0,\infty)$ such that
  for every $k \in (0,T)$, $h \in (0,1)$, and $M \in \N$ we have 
  \begin{align*}
    &\big\| \mathcal{R}_{k,h,M}[ X|_{\pi_k} ] \big\|_{S,p,h} \le C 
    \big( h^{1+r} + k^{\frac{1}{2} + \min(\frac{r}{2}, \gamma)}+M^{-\alpha}
    \big),
  \end{align*}
  where $X|_{\pi_k}$ denotes the restriction of the 
  mild solution \eqref{eq:mild} to the equidistant grid points in
  $\pi_k$. 
\end{theorem}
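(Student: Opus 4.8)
The plan is to compare the truncated residual with the untruncated one, whose Spijker norm is already controlled by Theorem~\ref{thm:consistency}. Writing
\begin{align*}
  \mathcal{R}_{k,h,M}[X|_{\pi_k}]
  = \mathcal{R}_{k,h}[X|_{\pi_k}]
  + \big( \mathcal{R}_{k,h,M}[X|_{\pi_k}] - \mathcal{R}_{k,h}[X|_{\pi_k}] \big)
\end{align*}
and using the triangle inequality for $\| \cdot \|_{S,p,h}$, the first summand contributes the factor $h^{1+r} + k^{\frac{1}{2} + \min(\frac{r}{2},\gamma)}$, so it remains to bound the remainder by $C M^{-\alpha}$, uniformly in $k$ and $h$. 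Since $\mathcal{R}_{k,h,M}$ and $\mathcal{R}_{k,h}$ agree on the $t_0$-component and differ elsewhere only through the increment functions $\Phi_{k,h}^j$ and $\Phi_{k,h,M}^j$ from \eqref{eq:Phi} and \eqref{eq:Phi-M}, the definition of the Spijker norm reduces the remainder to
\begin{align*}
  \max_{n \in \{1,\ldots,N_k\}} \Big\| \sum_{j=1}^n S_{k,h}^{n-j}
  \big( \Phi_{k,h}^j(X(t_{j-1}),\tau_j) - \Phi_{k,h,M}^j(X(t_{j-1}),\tau_j) \big)
  \Big\|_{L^p(\Omega;H)}.
\end{align*}

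Next I would split the increment difference into its drift and diffusion parts. The diffusion parts differ by $S_{k,h} g(t^\tau_j) \Delta_k W^{cM}(t_{j-1})$, whereas the drift parts differ only through the arguments $\Psi_{k,h}^j$ and $\Psi_{k,h,M}^j$, which by \eqref{eq:Psi} and \eqref{eq:Psi-M} satisfy $\Psi_{k,h}^j(x,\tau) - \Psi_{k,h,M}^j(x,\tau) = S_{\tau k,h} g(t_{j-1}) \Delta_{\tau k} W^{cM}(t_{j-1})$. For the drift contribution the Lipschitz bound of Assumption~\ref{as:f} together with \eqref{eq:S_k,hL} reduces the sum to $C_f k \sum_{j} \| \Psi_{k,h}^j - \Psi_{k,h,M}^j \|_{L^p(\Omega;H)}$, and each summand is handled by Proposition~\ref{prop:BDG} after conditioning on $\tau$. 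For the diffusion contribution I would proceed exactly as in the estimate of $I_2$ in the proof of Lemma~\ref{lem:stab}: introduce the piecewise constant process $\hat g$ from \eqref{eq:ghat}, rewrite the sum as the stochastic integral $\int_0^{t_n} \overline{S}_{k,h}(t_n - s) \hat g(s) \diff{W^{cM}(s)}$ against the $Q_{cM}$-Wiener process, and apply Proposition~\ref{prop:BDG} together with \eqref{eq:S_k,h}.

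Both contributions then hinge on the single tail estimate $\| g(t) Q_{cM}^{\frac{1}{2}} \|_{\mathcal{L}_2} \le C_Q M^{-\alpha}$, which is the heart of the argument and the point at which Assumption~\ref{as:QM} enters. Expanding in the eigenbasis $(\varphi_m)$ of $Q$ as in \eqref{eqn:g-QM} gives $\| g(t) Q_{cM}^{\frac{1}{2}} \|_{\mathcal{L}_2}^2 = \sum_{m = M+1}^\infty \mu_m \| g(t) \varphi_m \|^2$; inserting the factor $m^{-2\alpha} m^{2\alpha}$ and bounding $m^{-2\alpha} \le M^{-2\alpha}$ for $m \ge M+1$ yields the claim through \eqref{eqn:as-QM}. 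With this in hand the diffusion contribution is bounded by $C_p \big( \int_0^{t_n} C_Q^2 M^{-2\alpha} \diff{s} \big)^{\frac{1}{2}} \le C_p C_Q T^{\frac{1}{2}} M^{-\alpha}$, supplying the dominant $M^{-\alpha}$ term, while the drift contribution carries an extra factor $k^{\frac{1}{2}}$ from the BDG inequality and is therefore subdominant. I expect the only genuine difficulty to be the correct bookkeeping of the Hilbert--Schmidt norm with respect to $Q_{cM}^{\frac{1}{2}}(U)$ in the Burkholder--Davis--Gundy step and the verification that the $\hat g$-substitution preserves the martingale structure; these are, however, routine adaptations of the computations already carried out in Lemma~\ref{lem:stab} and in the proof of Theorem~\ref{thm:stability-M}.
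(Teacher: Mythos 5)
Your proposal is correct, and the two key estimates it rests on are exactly the ones the paper uses: the tail bound $\| g(t) Q_{cM}^{\frac{1}{2}} \|_{\mathcal{L}_2} \le C_Q M^{-\alpha}$ from Assumption~\ref{as:QM} (the paper's \eqref{eqn:g-QcM}), and the Burkholder--Davis--Gundy estimate for the stochastic integral of $\overline{S}_{k,h}(t_n-\cdot)\hat{g}$ against the residual Wiener process $W^{cM}$. What differs is the packaging. The paper re-traces the consistency analysis lemma by lemma: it checks that Lemma~\ref{lem:cons_ini} through Lemma~\ref{lem:Phi1} survive the truncation unchanged, and then modifies only the term $J_8^j$ inside Lemma~\ref{lem:Phi2} (yielding $J_{8,M}^j$ with the extra summand $\| S_{\tau_j k,h} g(t_{j-1}) \Delta_{\tau_j k} W^{cM}(t_{j-1}) \|_{L^p(\Omega;H)} \le C_p C_Q M^{-\alpha} k^{\frac{1}{2}}$) and Lemma~\ref{lem:Phi3} (yielding the extra term $\max_n \| \sum_j S_{k,h}^{n-j+1} g(t_{j-1}+\tau_j k) \Delta_k W^{cM}(t_{j-1}) \|_{L^p(\Omega;H)} \le C_p C_Q T^{\frac{1}{2}} M^{-\alpha}$). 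You instead subtract the untruncated residual, invoke Theorem~\ref{thm:consistency} wholesale, and observe that the remainder is driven only by the increment-function differences, which reduce to precisely those two noise-tail terms (your drift correction $C_f k \sum_j \| \Psi_{k,h}^j - \Psi_{k,h,M}^j \|$ is term-for-term the paper's $J_{8,M}^j$ correction, and your diffusion correction is the paper's Lemma~\ref{lem:Phi3} correction). Your route is cleaner in that it avoids re-inspecting the unaffected lemmas and makes it transparent that the truncation error is a pure perturbation of order $M^{-\alpha}$; the paper's route keeps the $k^{\frac{1}{2}}$ prefactor of the drift-side correction explicitly localized in $J_{8,M}^j$, which is what later justifies using a coarser truncation level inside $\Psi_{k,h,M}$ in Remark~\ref{rmk:M1M2}. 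Both yield the stated bound.
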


\begin{proof}
  An inspection shows that Lemma~\ref{lem:cons_ini} to
  Lemma~\ref{lem:Phi1} remain valid for the truncated scheme
  (\ref{eq:scheme-truncated}) by applying, if necessary, 
  the same argument as in \eqref{eqn:g-QM}. It therefore remains to
  adapt the proofs of Lemma~\ref{lem:Phi2} and Lemma~\ref{lem:Phi3}.

  First, we observe that the truncation of the noise only affects
  the term $J_8^j$ appearing in \eqref{eq:lastterm} 
  in the proof of Lemma~\ref{lem:Phi2}. Hence, we need to find 
  a corresponding estimate of the term 
  \begin{align*}
    J_{8,M}^j &= \Big\| \int_{t_{j-1}}^{t_j^\tau} S(t_j^\tau -s) g(s)
    \diff{W(s)} - S_{\tau_j k, h} g(t_{j-1}) \Delta_{\tau_jk} W^M(t_{j-1})
    \Big\|_{L^p(\Omega;H)}\\
    &\le \Big\| \int_{t_{j-1}}^{t_j^\tau} 
    S(t_j^\tau -s) \big( g(s) - g(t_{j-1}) \big)
    \diff{W(s)} \Big\|_{L^p(\Omega;H)}\\
    &\qquad + \Big\| \int_{t_{j-1}}^{t_j^\tau} 
    \big( S(t_j^\tau -s) - \overline{S}_{\tau_j k, h}(t_j^\tau  -s ) \big)
    g(t_{j-1}) \diff{W(s)} \Big\|_{L^p(\Omega;H)}\\
    &\qquad  + \big\| S_{\tau_j k, h} g(t_{j-1}) \Delta_{\tau_j k}
    W^{cM}(t_{j-1}) \big\|_{L^p(\Omega;H)}  
  \end{align*}
  for all $j \in \{1,\ldots,N_k\}$, where $t_j^\tau = t_{j-1} + \tau_j k$. The
  first two terms are estimated in the same way as $J_8^j$ in the proof of
  Lemma~\ref{lem:Phi2}. 
  In order to give a bound for the last term we first take note of
  \begin{align}
    \label{eqn:g-QcM}
    \begin{split}
      \|g(t) Q_{cM}^{\frac{1}{2}}\|^2_{\mathcal{L}_2}
      &= \sum_{m=M+1}^{\infty} \mu_m \|g(t)\varphi_m\|^2
      \leq \sum_{m=M+1}^{\infty} \frac{m^{2\alpha}}{M^{2\alpha}}
      \mu_m\|g(t)\varphi_m\|^2\\
      &\leq \frac{1}{M^{2\alpha}} \sum_{m=1}^{\infty} 
      m^{2\alpha}\mu_m \|g(t)\varphi_m\|^2
      \leq \frac{C^2_Q}{M^{2\alpha}}
    \end{split}
  \end{align}
  for all $t \in [0,T]$,
  which follows from Assumption~\ref{as:QM}. Together with applications
  of Proposition~\ref{prop:BDG} and the stability estimate \eqref{eq:S_k,hL}
  from Lemma~\ref{lem:estimate_Sdiscrete} we then obtain
  \begin{align*}
    \big\| S_{\tau_j k, h} g(t_{j-1}) \Delta_{\tau_j k}
    W^{cM}(t_{j-1}) \big\|_{L^p(\Omega;H)}
    &\le C_p \Big( 
    \E_\tau \Big[ \Big( \int_{t_{j-1}}^{t_j^\tau} 
    \| g(t_{j-1}) Q_{cM}^{\frac{1}{2}} \|^2_{\mathcal{L}_2} \diff{s}
    \Big)^{\frac{p}{2}} \Big] \Big)^{\frac{1}{p}}\\
    &\le C_p C_Q M^{- \alpha} k^{\frac{1}{2}}.
  \end{align*}
  Altogether, this yields
  \begin{align}
    \label{eq:J_8M}
    J_{8,M}^j \le  C_p \big( 
    \| g \|_{C^{\frac{1}{2}}([0,T];\mathcal{L}_2^0)} k 
    + \big\| A^{\frac{r}{2}} g
    \big\|_{C([0,T];\mathcal{L}_2^0)} 
    \big( h^{1+r} +  k^{\frac{1+r}{2}} \big)
    + C_Q M^{- \alpha} k^{\frac{1}{2}}    
    \big).
  \end{align}
  In the same way we derive a modification of Lemma~\ref{lem:Phi3}. To be more
  precise, instead of \eqref{eq:error_stochint} we need to find a bound for
  the norm
  \begin{align*}
    &\max_{n \in \{1,\ldots,N_k\} }
    \Big\| \sum_{j = 1}^n S_{k,h}^{n-j+1} \Big[
    \int_{t_{j-1}}^{t_j} g(s) \diff{W(s)}
    - g(t_{j-1} + \tau_j k) \Delta_k W^{M}(t_{j-1}) \Big]
    \Big\|_{L^p(\Omega;H)}\\
    &\quad \le \max_{n \in \{1,\ldots,N_k\} }
    \Big\| \sum_{j = 1}^n S_{k,h}^{n-j+1} 
    \int_{t_{j-1}}^{t_j} \big( g(s) - g(t_{j-1} + \tau_j k) \big) \diff{W(s)}
    \Big\|_{L^p(\Omega;H)}\\
    &\qquad + \max_{n \in \{1,\ldots,N_k\} }
    \Big\| \sum_{j = 1}^n S_{k,h}^{n-j+1} g(t_{j-1} + \tau_j k)
    \Delta_k W^{cM}(t_{j-1}) \Big\|_{L^p(\Omega;H)}.
  \end{align*}
  Lemma~\ref{lem:Phi3} is applicable to the first term on the right hand
  side and it remains to give a bound for the second term. To this end, we
  recall the operators $\overline{S}_{k,h}$ and $\hat{g}$ defined in
  \eqref{eq:discOpt} and \eqref{eqn:Lemma59-g}, respectively. Inserting these
  operators then yields
  \begin{align*}
    &\max_{n \in \{1,\ldots,N_k\} }
    \Big\| \sum_{j = 1}^n S_{k,h}^{n-j+1} g(t_{j-1} + \tau_j k)
    \Delta_k W^{cM}(t_{j-1}) \Big\|_{L^p(\Omega;H)}\\
    &\quad = \max_{n \in \{1,\ldots,N_k\} }
    \Big\| \int_0^{t_n} \overline{S}_{k,h}(t_n - s)
    \hat{g}(s) \diff{W^{cM}(s)} \Big\|_{L^p(\Omega;H)}\\
    &\quad \le C_p \Big( \int_0^T \big\| \hat{g}(s) Q^{\frac{1}{2}}_{cM}
    \big\|^2_{L^p(\Omega_\tau;\mathcal{L}_2)}
    \diff{s} \Big)^{\frac{1}{2}},
  \end{align*}
  where we also applied Proposition~\ref{prop:BDG} and the stability estimate
  \eqref{eq:S_k,h}  in the last step. After reinserting the definition
  \eqref{eqn:Lemma59-g} of $\hat{g}$ we again make use of \eqref{eqn:g-QcM} and
  obtain
  \begin{align*}
    C_p \Big( \int_0^T \big\| \hat{g}(s) Q^{\frac{1}{2}}_{cM}
    \big\|^2_{L^p(\Omega_\tau;\mathcal{L}_2)}
    \diff{s} \Big)^{\frac{1}{2}}
    &= C_p \Big( k \sum_{j = 1}^{N_k}
    \big\| g(t_{j-1} + \tau_j k) Q_{cM}^{\frac{1}{2}}
    \big\|_{L^p(\Omega_\tau;\mathcal{L}_2)}^2 
    \Big)^{\frac{1}{2}}\\
    &\le C_p C_Q T^{\frac{1}{2}} M^{-\alpha}.
  \end{align*}
  Together with a simple modification of \eqref{eq:splitres}, a combination 
  of these estimates with Lemma~\ref{lem:cons_ini} 
  to Lemma~\ref{lem:Phi3} then yields the assertion.
\end{proof}

\begin{remark}
  \label{rmk:M1M2}
  From the proof of Theorem~\ref{thm:consistency-M} it follows that
  the value of the parameter $M \in \N$ does not need to be the
  same in the definitions \eqref{eq:Phi-M} and 
  \eqref{eq:Psi-M} of the two increment functions
  $\Phi_{k,h,M}$ and $\Psi_{k,h,M}$. In fact, we obtain the same order of
  convergence if we replace $M$ in the definition of $\Psi_{k,h,M}$ by
  $\tilde{M} := \lfloor \sqrt{M} \rfloor + 1$.
  
  The reason for this is that
  the parameter $\tilde{M}$ only appears in the estimate \eqref{eq:J_8M}
  of $J_{8,M}^j$ with $k^{\frac{1}{2}}$ as a pre-factor. 
  Due to
  \begin{align*}
    \tilde{M}^{-\alpha} k^{\frac{1}{2}} 
    \le \frac{1}{2} \big( \tilde{M}^{-2 \alpha} + k \big)
    \le \frac{1}{2} \big( M^{-\alpha} + k \big)
  \end{align*}
  the orders of convergence with respect to $M$ and $k$ remains indeed
  unaffected by this modification. 
  This leads to a substantial reduction of the computational cost for
  $\Psi_{k,h,M}$ for large values of $M$. 
\end{remark}

\begin{remark}[Cylindrical Wiener processes]
  Assumption~\ref{as:Q} can be relaxed to covariance operators
  $Q \in \mathcal{L}(U)$ which are not of finite trace but still
  possess a spectral representation of the form \eqref{eq:Qeig}. For example,
  we mention the case of a space-time white noise $W$, where $Q =
  \mathrm{Id}$. In that case the approximation $W^M$ in 
  \eqref{eqn:WM_expansion} does not, in general, converge to $W$ with respect
  to the norm in the Hilbert space $U$. 
  Nevertheless, if the mapping $g$ still satisfies Assumption~\ref{as:QM}
  then Theorem~\ref{thm:stability-M} and Theorem~\ref{thm:consistency-M}
  remain valid. In particular, observe that \eqref{eqn:as-QM} simply 
  yields the Hilbert--Schmidt norm of $g(t) \in \mathcal{L}_2(U,H)$
  if $\mu_m \equiv 1$ and $\alpha = 0$.

  For further details on the analytical treatment of cylindrical Wiener
  processes we refer to \cite[Section~2.5]{rockner2007}.
\end{remark}


\section{Application to stochastic partial differential equations}
\label{sec:examples}

In this section we apply the randomized method \eqref{eq:scheme-truncated} for
the numerical solution of a semilinear stochastic partial differential
equation (SPDE). First, we reformulate the SPDE as a stochastic evolution
equation of the form \eqref{eq:SPDE}. Then we verify the conditions of
Theorem~\ref{thm:consistency-M}. Finally, we perform a numerical 
experiment. 

Let us first introduce the semilinear SPDE that we want to solve numerically in
this section. The goal is to find a measurable mapping $u \colon [0,T] \times
[0,1] \times \Omega_W \to \mathbb{R}$ satisfying    
\begin{align}
  \label{eq:SPDEexample1}
  \begin{split}
    \begin{cases}
      \diff{u(t,x)} = \big[\frac{\partial^2}{\partial x^2} u(t,x)
      + \eta( t, u(t,x) )
      \big] \diff{t} + \sigma( t ) \diff{W(t,x)},\\
      \ \ u(0,x) = u_0(x):= 2(1-x)x,\ \  u(t,0)=u(t,1)=0,
    \end{cases}
  \end{split}
\end{align}
for $t \in (0,T]$ and $x\in (0,1)$. The Wiener process $W$ is assumed
to be of trace class and will be specified in more detail further below.
The coefficient function $\sigma \colon [0,T] \to [0,\infty)$ is used to
control the noise intensity, where we require that $\sigma \in
C^{\frac{1}{2}}(0,T) \cap W^{\frac{1}{2}+ \gamma,p}(0,T)$ for some $\gamma \in
(0,\frac{1}{2}]$ and $p \in [2,\infty)$.
The drift function $\eta \colon [0,T] \times \R \to \R$ is
assumed to be continuous. In addition, there exists $L \in (0,\infty)$ and
$\gamma \in (0,1]$ such that
\begin{align}
  \label{eq:eta}
  \begin{split}
    | \eta(t,v_1) - \eta(t,v_2) | &\le L |v_1 - v_2|\\
    | \eta(t_1,v) - \eta(t_2,v) | &\le L (1 + |v|) |t_1 - t_2|^\gamma
  \end{split}
\end{align}
for all $t, t_1, t_2 \in [0,T]$, $v, v_1, v_2 \in \R$.

In order to rewrite the SPDE \eqref{eq:SPDEexample1} as a stochastic evolution
equation we consider the separable Hilbert space $H=L^2(0,1)$. 
Then, the operator $-A = \frac{\partial^2}{\partial x^2}$ is the Laplace
operator on $(0,1)$ with homogeneous Dirichlet conditions. It is well-known,
see \cite[Section~8.2]{gilbarg2001} or 
\cite[Section~6.1]{larsson2009}, that
this operator
satisfies Assumption~\ref{as:A}. We have that $\dom(A) = H^1_0(0,1) \cap 
H^2(0,1)$. Moreover, the operator $A$ has the eigenfunctions $e_j =
\sqrt{2}\sin(j\pi \cdot)$ and eigenvalues $\lambda_j = j^2\pi^2$ for $j\in
\mathbb{N}$.   

The initial condition $U_0 \in H$ is then given by
\begin{align*}
  U_0(x) := u_0(x) = x ( 1-x), \quad x \in (0,1).
\end{align*}
Evidently, $U_0$ satisfies Assumption~\ref{as:ini} for any value of $p \in
[2,\infty)$ and $r \in [0,1]$. In particular, we have $U_0 \in \dom(A)$.

Further, let $f \colon [0,T] \times H \to H$ be the \emph{Nemytskii operator}
induced by $\eta$. To be more precise, $f$ is defined by  
\begin{align}
  \label{eqn:Nemytskii}
  f(t,v)(x) = -\eta(t,v(x)), \quad \text{for all } v \in H, \, x \in (0,1).
\end{align}
Then,with the same constant $L \in (0,\infty)$ as in \eqref{eq:eta} we have
\begin{align*}
  \| f(t,v_1)- f(t,v_2)\|^2 &= 
  \int_0^1 |\eta(t,v_1(x)) - \eta(t,v_2(x)) |^2 \diff{x}\\
  &\leq  L^2 \int_0^1 | v_1(x)-v_2(x)|^2 \diff{x}
  = L^2 \|v_1-v_2\|^2
\end{align*}
for all $v_1, v_2 \in H$ and $t \in [0,T]$. Analogously, we get
\begin{align*}
  \| f(t_1,v)- f(t_2,v)\| \le L (1 + \|v\| ) |t_1 - t_2 |^\gamma 
\end{align*}
for all $v \in H$ and $t_1, t_2 \in [0,T]$.
Thus, Assumption~\ref{as:f} is satisfied with the same values for 
$\gamma$ and $C_f = L$ as in \eqref{eq:eta}.

Next, we specify the Wiener process $W$ appearing in \eqref{eq:SPDEexample1}.
For this we choose $U = H = L^2(0,1)$. Then, the covariance operator $Q \in
\mathcal{L}(H)$ is defined by setting $Q e_j = \mu_j e_j$, where $\mu_j = j^{-3}$ and  
$(e_j)_{j \in \N} \subset H$ is the orthonormal basis consisting of
eigenfunctions of $A$. Clearly, we have $\mathrm{Tr}(Q) = \sum_{j = 1}^\infty 
\mu_j < \infty$. Finally, the operator $g\colon [0,T]\to \mathcal{L}_2^0$ is 
defined by 
\begin{align*}
  g(t) := \sigma(t)\, \mathrm{Id} 
\end{align*}
for all $t \in [0,T]$, where $\mathrm{Id} \in \mathcal{L}(H)$ is the
identity operator. In order to verify
Assumption~\ref{as:g} recall that $\| g(t) \|_{\mathcal{L}_2^0} = \| g(t)
Q^{\frac{1}{2}} \|_{\mathcal{L}_2}$. Then, for every $r \in [0,1)$ we compute
\begin{align*}
  \sup_{t\in [0,T]} \|A^{\frac{r}{2}} g(t)\|^2_{\mathcal{L}_2^0}
  = \sup_{t\in [0,T]} \sum_{j=1}^\infty
  \| \sigma(t) A^{\frac{r}{2}} \sqrt{\mu_j} 
  e_j \|^2 \leq \| \sigma \|^2_{C([0,T])} \pi^{2r}\sum_{j=1}^\infty
  j^{2r-3}<\infty.
\end{align*}
In addition, we get for all $t_1, t_2 \in [0,T]$ that 
\begin{align*}
  \|g(t_1)-g(t_2)\|_{\mathcal{L}_2^0}^2
  = \sum_{j=1}^\infty \|(\sigma(t_1)-\sigma(t_2)) \sqrt{\mu_j} e_j \|^2
  \leq \| \sigma \|^2_{C^{\frac{1}{2}}([0,T])} \mbox{Tr}(Q)|t_1-t_2|.
\end{align*} 
Moreover, since $\sigma \in W^{\frac{1}{2}+\gamma,p}(0,T)$ 
one can easily validate that 
\begin{align*}
  &\int_0^T \int_0^T \frac{ \|g(t_1) - g(t_2) \|^p_{\mathcal{L}_2^0} }{|t_1 -
  t_2|^{1 + p(\frac{1}{2}+\gamma) }} \diff{t_2} \diff{t_1}
   =  \mbox{Tr}(Q) \int_0^T \int_0^T 
  \frac{ | \sigma(t_1) - \sigma(t_2) |^p }{|t_1 -
  t_2|^{1 + p(\frac{1}{2} + \gamma)}} \diff{t_2} \diff{t_1}
  < \infty. 
\end{align*}
This implies that $g \in W^{\frac{1}{2}+\gamma,p}(0,T;\mathcal{L}_2^0)$.
Altogether, we have verified Assumption~\ref{as:g} with
$\gamma \in (0,\frac{1}{2})$ and $r=1-\epsilon$ for any $\epsilon \in
(0,\frac{1}{2})$. By the same means one also verifies Assumption~\ref{as:QM} for
any $\alpha \in (0,1)$.

Altogether, we can rewrite the SPDE \eqref{eq:SPDEexample1} as the following
stochastic evolution equation on $H = L^2(0,1)$ 
\begin{align}
  \label{eq:SPDEexample2}
  \begin{split}
    \begin{cases}
      \diff{U(t)} + \big[ A U(t) + f(t,U(t)) \big] \diff{t}
      = g(t) \diff{W(t)},& t \in (0,T],\\
      \ \ U(0) = U_0.
    \end{cases}
  \end{split}
\end{align}

Next, we turn to the numerical discretization of \eqref{eq:SPDEexample2}. For
the spatial discretization we choose a standard finite element method
consisting of piecewise linear functions on a uniform mesh in $(0,1)$.
It is well-known that the associated Ritz projector then satisfies
Assumption~\ref{as:Ritz}. For instance, we refer to
\cite[Theorem~5.5]{larsson2009}.

Let us now choose the mappings $\eta$ and $\sigma$ in \eqref{eq:SPDEexample1} 
more explicitly. In the simulations below we used the function $\eta_J \colon
\R \to \R$ given by
\begin{align}
  \label{eqn:weierstrass}
  \eta_J(v) := \sum_{n=0}^{J} a^n \cos(b^n\pi v), \quad v \in \R, J \in \N,
\end{align}
as the semilinearity.
Note that $\eta_J$ is a truncated version of the Weierstrass function
with parameters $a \in (0,1)$ and $b \in \N$ being an odd integer such that
$ab>1+\frac{3}{2}\pi$. For such a choice of the parameter values
the Weierstrass function (obtained for $J = \infty$) 
is everywhere continuous but nowhere differentiable, see \cite{hardy1916}.
The mapping $\eta_J$ is therefore a smooth approximation of an irregular
mapping. In particular, $\eta_J$ is Lipschitz continuous and bounded for every
$J \in \N$, but the Lipschitz constant grows exponentially with $J$.  

In addition, we performed the numerical experiments
with two different mappings in place of
the noise intensity $\sigma$. First, we used the function
$\sigma_1 \colon [0,T] \to \R$ defined by
\begin{align*}
  \sigma_1(t) = 3 \sqrt{t}, \quad t \in [0,T].
\end{align*}
It is easily verified that indeed $\sigma_1 \in C^{\frac{1}{2}}(0,T) \cap 
W^{1 - \epsilon,2}(0,T)$ for any $\epsilon \in (0,\frac{1}{2})$. 
Second, we also made use of the mapping $\sigma_2 \colon [0,T] \to \R$ given by
\begin{align*}
  \sigma_2(t) = 4 \sqrt{ | \sin( 16 \pi t) | }, \quad t \in [0,T].
\end{align*}
Note that this mapping resembles a so called fooling function, that is
particularly tailored to misguide the classical Euler--Galerkin finite element
method. See \cite{kruse2017} and the references therein 
for a more detailed discussion of fooling functions. 

With these choices of $\eta$ and $\sigma$, all conditions of the randomized
Galerkin finite element method (\ref{eq:scheme-truncated}) with truncated noise
are satisfied. In particular, we expect a temporal order as high as
$\frac{1}{2}+\min(\frac{r}{2},\gamma) \approx 1 -\epsilon$ by 
Theorem~\ref{thm:consistency-M}. 

For the simulation displayed in Figure~\ref{fig_error}
we chose the parameter values $a=0.9$, $b=7$, and $J=5$ for $\eta_J$
in \eqref{eqn:weierstrass}. As the final time
we set $T = 1$. For the spatial
approximation we fixed the equidistant step size $h = \frac{1}{1001}$, that is,
we had $N_h = 1000$ degrees of freedom in the finite element space. For the 
simulation of the Wiener increments we followed the approach
in \cite[Section 10.2]{lord2014}. More precisely, the truncated
Karhunen-Lo\'eve expansion
\begin{equation}
  \label{eqn:Q-Wiener-approximation}
  W^{M}(t_{n+1},x_i) - W^{M}(t_{n},x_i)
  = \sum_{j=1}^{M} \sqrt{2 \mu_j} \sin( \pi j x_i ) \big(
  \beta_j(t_{n+1}) - \beta_j(t_n) \big)
\end{equation}  
can be evaluated efficiently by the discrete sine transformation 
on the nodes $x_i := ih$, $i =1, \ldots,N_h$, of the equidistant spatial mesh. 
For simplicity we chose the value $M = N_h = 1000$ for the expansion
\eqref{eqn:Q-Wiener-approximation}. 

Since the purpose of the randomization technique is to improve the temporal
convergence rate compared to Euler--Maruyama-type methods, we focused
on measuring this rate in our numerical experiments. 
For its approximation we first generated a reference solution with a small step
size of $k_{\mathrm{ref}} = 2^{-12}$. This reference solution was then compared 
to numerical solutions with larger step sizes $k \in \{2^{-i}\, : \, i =
4,\ldots,9\}$. Instead of evaluating directly the norm $\| \cdot \|_{\infty,2}$
defined in \eqref{eq:norm1} we replaced the integral with respect to $\Omega$
by a Monte Carlo approximation with $100$ independent samples and the norm in
$H = L^2(0,1)$ is approximated by a trapezoidal rule. This procedure was used
for the randomized Galerkin finite element method \eqref{eq:scheme-truncated} 
as well as for the classical linearly-implicit Euler--Galerkin finite element
method without any artificial randomization. 

\begin{figure}%
  \begin{center}%

      \includegraphics[width=1.0\textwidth]{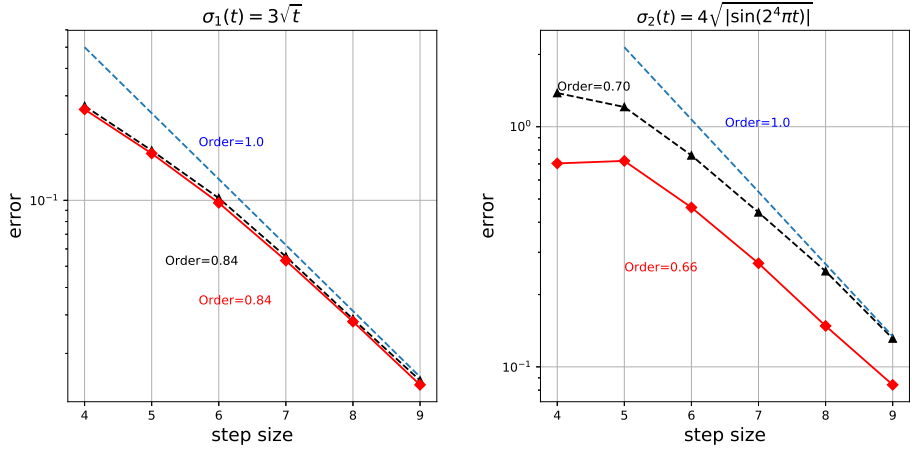}
                    \caption{\small Numerical experiment for  SPDE \eqref{eq:SPDEexample1}:
      Step sizes versus $L^2$ errors
      \label{fig_error}} 
  \end{center}
\end{figure}   

The results of our simulations are shown in Figure~\ref{fig_error}. We plot
the Monte Carlo estimates of the root-mean-squared errors 
versus the underlying temporal step size, i.e., the
number $i$ on the $x$-axis indicates the corresponding simulation is based on
the temporal step size $k = 2^{-i}$. The figure on the left hand side shows the
results for $\sigma = \sigma_1 = 3 \sqrt{t}$, while the right hand shows
$\sigma_2(t)$.
In both subfigures, the sets of data points on the black dotted curves with
triangle markers show the errors of the classical Galerkin finite
element method, while
the red-dotted error curves with diamond markers correspond to 
the simulations of the randomized Galerkin finite element method. 
In addition, we draw a
dashed blue reference line representing a method of order one.
The order numbers displayed in the figure correspond to the slope of a best
fitting function obtained by linear regression. This might be interpreted as
an average order of convergence.

For the case of $\sigma_1$, the error curves from both methods are almost
overlapping, with the same average order of convergence $0.84$. Since the
randomized method is computationally up to twice as expensive as the classical
Galerkin finite element method, the latter method is clearly superior in this
example. However, as already discussed in Remark~\ref{rem:Nemytskii}, the
results on the error analysis of the classical method currently available in
the literature are not able to  theoretically explain
the same order of convergence for
general stochastic evolution equations satisfying Assumptions~\ref{as:A} to
\ref{as:g}. 

This is illustrated by the more academic example of $\sigma_2$. Here the
coefficient function of the noise intensity is chosen in such a way that the
classical Galerkin finite element method cannot distinguish between a
deterministic PDE without the Wiener noise and the SPDE
\eqref{eq:SPDEexample1}. In fact, for all step sizes $k = 2^{i}$ with $i \in
\{4, 5 \}$ the classical method only evaluates $\sigma_2$ on its zeros which
explains the large errors for those step sizes seen on the right hand side
of Figure~\ref{fig_error}. The randomized method is less severely affected by
the highly oscillating coefficient function. For smaller step sizes this
advantage then decays.

\begin{table}[h]
  \caption{   \label{tab:randGFEM_L2_err}
  Numerical values of the $L^2$-errors and experimental order of convergence
  (EOC) for the simulations shown in Figure~\ref{fig_error}.} 
  \begin{tabular}{p{1.1cm}|p{1.1cm}p{0.9cm}p{1.1cm}p{0.9cm}|p{1.1cm}p{0.9cm}p{1.1cm}p{0.8cm}}
  & & $\sigma_1(t)=$&$3\sqrt{t}$ & & & $\sigma_2(t)=$  &$4 \sqrt{ | \sin( 16 \pi t) | }$ & \\
  \noalign{\smallskip}\hline\noalign{\smallskip}
     & classic GFEM &      & rand. GFEM & & classic GFEM     &  & rand. GFEM &  \\ 
     \noalign{\smallskip}\hline\noalign{\smallskip}
     $k$     & error & EOC     & error & EOC     & error & EOC     & error & EOC \\ 
     \noalign{\smallskip}\hline\noalign{\smallskip}
      0.0625      & 0.2696 &      & 0.2601 &  &    1.3812  &  & 0.7034&  \\ 
      0.0312  & 0.1688 & 0.67  & 0.1636 & 0.67 & 1.2082 & 0.19  & 0.7205 & -0.03 \\ 
      0.0156  & 0.1023 & 0.72  & 0.0974 & 0.75 & 0.7595 & 0.67  & 0.4611 & 0.64\\ 
      0.0078  & 0.0553 & 0.89  & 0.0531 & 0.87 & 0.4398 & 0.79  & 0.2699 & 0.77 \\ 
      0.0039  & 0.0287 & 0.95  & 0.0280 & 0.93 & 0.2501 & 0.81  & 0.1481 & 0.87\\ 
      0.0020  & 0.0151 & 0.93  & 0.0144 & 0.96 & 0.1312 & 0.93  & 0.0842 & 0.81\\ 
  \end{tabular}
\end{table}

Table~\ref{tab:randGFEM_L2_err} contains the numerical values of the error data
displayed in Figure~\ref{fig_error}. In addition, we computed the corresponding
experimental orders of convergence defined by
\begin{equation*}
  \mbox{EOC} = \dfrac{ \log(\mbox{error}(2^{-i})) - \log(\mbox{error}(2^{-i+1})) }{
  \log(2^{-i})-\log(2^{-i+1})}  
\end{equation*}
for $i\in \{5,6,7,8,9\}$, where the term $\mbox{error}(2^{-i})$ denotes the
error of step size $2^{-i}$. 

\section*{Acknowledgement}

The authors like to thank Sebastian Zachrau for assisting with the numerical 
experiments.
This research was carried out in the framework of \textsc{Matheon}
supported by Einstein Foundation Berlin. The authors also gratefully
acknowledge financial support by the German Research Foundation through the
research unit FOR 2402 -- Rough paths, stochastic partial differential
equations and related topics -- at TU Berlin. 


\end{document}